\newtheorem{theorem}{Theorem}[section]
\newtheorem{lemma}[theorem]{Lemma}
\newtheorem{definition}[theorem]{Definition}
\theoremstyle{remark}
\newtheorem{assumption}{Assumption A.}
\newcommand{\R}{\mathbb{R}}
\newcommand{\Rb}{\mathbb{R}}
\newcommand{\Rext}{\mathbb{R}\cup\{+\infty\}}
\newcommand{\set}[1]{\left\{#1\right\}}
\newcommand{\norm}[1]{\Vert #1\Vert}
\newcommand{\dist}[1]{\mathrm{dist}\left(#1\right)}
\newcommand{\xb}{\mathbf{x}}
\newcommand{\zb}{\mathbf{z}}
\newcommand{\ub}{\mathbf{u}}
\newcommand{\Kc}{\mathcal{K}}
\newcommand{\Xc}{\mathcal{X}}
\newcommand{\Zc}{\mathcal{Z}}
\newcommand{\Xb}{\mathbf{X}}
\newcommand{\Lc}{\mathcal{L}}
\newcommand{\rb}{\mathbf{r}}
\newcommand{\bb}{\mathbf{b}}
\newcommand{\Ab}{\mathbf{A}}
\newcommand{\Ac}{\mathcal{A}}
\newcommand{\iprods}[1]{\langle #1\rangle}
\newcommand{\prox}{\textrm{prox}}
\newcommand{\xopt}{\mathbf{x}^{\star}}
\newcommand{\Xopt}{\mathcal{X}^{\star}}
\newcommand{\fopt}{f^{\star}}
\newcommand{\dopt}{d^{\star}}
\newcommand{\xbar}{\bar{\mathbf{x}}}
\newcommand{\lbd}{\boldsymbol{\lambda}}
\newcommand{\Lbd}{\boldsymbol{\Lambda}}
\title{{\fontsize{5.5mm}{1em}\selectfont 
A Universal Primal-Dual Convex Optimization Framework
}}
\author{
Alp Yurtsever$^{\dagger}$	 \hspace{15mm}	 Quoc Tran-Dinh$^\ddagger$ 	\hspace{15mm}	Volkan Cevher$^\dagger$ \vspace{2mm} \\ 
{$^\dagger$ Laboratory for Information and Inference Systems, EPFL, Switzerland}\\
\texttt{\{alp.yurtsever, volkan.cevher\}@epfl.ch}\\
{$^\ddagger$ Department of Statistics and Operations Research, UNC, USA}\\
\texttt{quoctd@email.unc.edu}
%Affiliation \\
%Address \\
%\texttt{email} \\
%\And
%Coauthor \\
%Affiliation \\
%Address \\
%\texttt{email} \\
%(if needed)\\
}
\begin{document}

\maketitle
\vspace{-5mm}
\begin{abstract}\vspace{-1mm}
We propose a new primal-dual algorithmic framework for a prototypical constrained convex optimization template. The algorithmic instances of our framework are \emph{universal} since they can automatically adapt to the unknown H\"{o}lder continuity degree and constant within the dual formulation. They are also guaranteed to have optimal convergence rates in the objective residual and the feasibility gap for each H{\"o}lder smoothness degree. In contrast to existing primal-dual algorithms, our framework avoids the proximity operator of the objective function. We instead leverage computationally cheaper, Fenchel-type operators, which are the main workhorses of the generalized conditional gradient (GCG)-type methods. In contrast to the GCG-type methods, our framework does not require the objective function to be differentiable, and can also process additional general linear inclusion constraints, while guarantees the convergence rate on the primal problem. 
\end{abstract}

%!TEX root = PDUGA_MAIN_arxiv.tex

\vspace{-.5ex}
\section{Introduction}\label{sec:intro}
\vspace{-1.5ex}

This paper constructs an algorithmic framework for the following convex optimization template: 
%, which greatly covers many applications from several disciplines including \eqref{eq:l2_prob} (cf. Section \ref{sec:related2FW}):
 \begin{equation}\label{eq:constr_cvx}
\fopt := \displaystyle\min_{\xb \in \Xc} \set{ f(\xb) : \Ab\xb - \bb \in \Kc},\vspace{-1mm}
 \end{equation}
where $f : \Rb^{p} \to \Rext$ is a convex function, $\Ab\in \R^{n\times p}$, $\bb\in\R^n$, and $\Xc$ and $\Kc$  are nonempty, closed and convex sets in $\R^p$ and $\R^n$ respectively. 
The constrained optimization formulation \eqref{eq:constr_cvx}  is quite flexible,  capturing many important  learning problems in a unified fashion, including matrix completion, sparse regularization, support vector machines, and submodular optimization \cite{Jaggi2013,Cevher2014,Wainwright2014}. 

Processing the inclusion $\Ab\xb - \bb \in \Kc$ in \eqref{eq:constr_cvx} requires a significant computational effort in the large-scale setting \cite{Lan2015}. 
Hence, the majority of the scalable numerical solution methods for \eqref{eq:constr_cvx} are of the primal-dual-type, including decomposition, augmented Lagrangian, and alternating direction methods: \textit{cf.}, \cite{Boyd2011,Combettes2008,Goldstein2013,Lan2015,Shefi2014,TranDinh2014b}. 
The efficiency guarantees of these methods mainly depend on three properties of $f$: Lipschitz  gradient, strong convexity, and the tractability of its proximal operator. For instance, 
the proximal operator of $f$, i.e., $\mathrm{prox}_f(\xb) := \arg\min_{\zb}\left\{ f(\zb) + (1/2)\Vert\zb- \xb\Vert^2\right\}$, is key in handling non-smooth $f$ while obtaining the convergence rates as if it had Lipschitz gradient. % in the primal-dual setting.  

When the set $\Ab\xb \!-\! \bb \!\in\! \Kc$ is absent in \eqref{eq:constr_cvx}, other methods can be preferable to primal-dual algorithms. 
For instance, if $f$ has Lipschitz gradient, then we can use the accelerated proximal gradient methods by applying the proximal operator for the indicator function of the set $\mathcal{X}$ \cite{Beck2009,Nesterov2005c}. However, as~the problem dimensions become increasingly larger, the proximal tractability assumption can be restrictive. This fact increased the popularity of the generalized conditional gradient (GCG) methods (or Frank-Wolfe-type algorithms), which instead leverage the following Fenchel-type oracles \cite{Jaggi2013, Juditsky2013a,Yu2014}
\begin{equation}\label{eq:sharp_oper0}
%\vspace{-0.5ex}
[ \xb ]^\sharp_{\Xc,g} := \arg\max_{\mathbf{s} \in \Xc}\set{\iprods{\xb, \mathbf{s}} - g(\mathbf{s})},
%\vspace{-0.5ex}
\end{equation}
where $g$ is a convex function. When $g=0$, we obtain the so-called linear minimization oracle \cite{Juditsky2013a}. 
When $\Xc\equiv\R^p$, then the (sub)gradient of the Fenchel conjugate of $g$, $\nabla g^{*}$, is in the set $[ \xb ]^\sharp_{g}$. % = \nabla{g^{*}}(\xb)$, where $g^{*}$ is the Fenchel conjugate of $g$. 
The \textit{sharp}-operator in \eqref{eq:sharp_oper0}  is often much cheaper to process as compared to the $\mathrm{prox}$ operator \cite{Jaggi2013, Juditsky2013a}.
 While the GCG-type algorithms require $\mathcal{O}\left( 1/\epsilon\right)$-iterations to guarantee an $\epsilon$ -primal objective residual/duality gap, they cannot converge when their objective is nonsmooth \cite{Nesterov2015}. 
%These methods are also preferred since their primal solution is constructed as a weighted convex combination of algorithmic iterates, which are meaningful when we seek sparse or low-rank solutions. 

To this end, we propose a new primal-dual algorithmic framework that can exploit the sharp-operator of $f$ in lieu of its proximal operator. 
Our aim is to combine the flexibility of proximal primal-dual methods in addressing the general template \eqref{eq:constr_cvx} while leveraging the computational advantages of the GCG-type methods.  
As a result, we trade off the computational difficulty per iteration with the overall rate of convergence. 
While we obtain optimal rates based on the sharp-operator oracles, we note that the rates reduce to $\mathcal{O}\left( 1/\epsilon^2\right)$ with the sharp operator vs.\ $\mathcal{O}\left( 1/\epsilon\right)$ with the proximal operator when $f$ is completely non-smooth (\textit{cf.} Definition \ref{de:approx_sols}). 
Intriguingly, the convergence rates are the same when $f$ is strongly convex.   
%We achieve this in our algorithms by changing the oracle by replacing the proximal operator 
%that trades off the computational difficulty in the primal subproblems with the difficulty of optimization in the dual subproblems by replacing the proximal mapping with linear minimization oracles. 
Unlike GCG-type methods, our approach can now handle nonsmooth objectives in addition to complex constraint structures as in \eqref{eq:constr_cvx}.

Our primal-dual framework is \textit{universal} in the sense the convergence of our algorithms can optimally adapt to the H\"{o}lder continuity of the dual objective $g$ (\textit{cf.}, \eqref{eq:dual_components} in Section~\ref{sec:upg_mapping}) without having to know its parameters. 
By H\"{o}lder continuity, we mean the (sub)gradient $\nabla g$ of a convex function $g$ satisfies $\|\nabla g(\lbd) - \nabla g(\tilde{\lbd})\| \le M_{\nu} \|\lbd - \tilde{\lbd}\|^\nu$ with parameters $M_\nu< \infty$ and $\nu \in [0,1]$ for all $\lbd,\tilde{\lbd} \in \R^n$.  
The case $\nu=0$ models the bounded subgradient, whereas $\nu=1$ captures the Lipschitz gradient.
The  H\"{o}lder continuity has recently resurfaced in unconstrained optimization by \cite{Nesterov2014} with universal gradient methods that obtain optimal rates without having to know $M_\nu$ and $\nu$. 
Unfortunately, these methods cannot directly handle the general constrained template \eqref{eq:constr_cvx}. 
After our initial draft appeared, \cite{Nesterov2015} presented new GCG-type methods for composite minimization, i.e., $\min_{\xb \in \mathbb{R}^p} f(\xb)+ \psi(\xb)$,   relying on  H\"{o}lder \textit{smoothness} of $f$ (i.e., $\nu \in (0,1]$) and the sharp-operator of $\psi$. 
The methods in \cite{Nesterov2015} do not apply when $f$ is non-smooth. 
In addition, they cannot process the additional inclusion $\Ab\xb - \bb \in \Kc$ in \eqref{eq:constr_cvx}, which is a major drawback for machine learning applications.

Our algorithmic framework features a gradient method and its accelerated variant that operates on the dual formulation of \eqref{eq:constr_cvx}. For the accelerated variant, we study an alternative to the universal accelerated   method of \cite{Nesterov2014} based on FISTA  \cite{Beck2009} since it requires less proximal operators in the dual. While the FISTA scheme is classical, our analysis of it with the  H\"{o}lder continuous assumption is new. Given the dual iterates, we then use a new averaging scheme to construct the primal-iterates for the constrained template \eqref{eq:constr_cvx}. In contrast to the non-adaptive weighting schemes of GCG-type algorithms, our weights explicitly depend on the local estimates of the H\"{o}lder constants $M_\nu$ at each iteration. 
Finally, we derive the worst-case complexity results. 
Our results are optimal since they match the computational lowerbounds in the sense of first-order black-box methods \cite{Nemirovskii1983}.

\vspace{-2ex}
\paragraph{Paper organization:}
Section \ref{sec:primal_dual_form} briefly recalls primal-dual formulation of problem \eqref{eq:constr_cvx} with some standard assumptions.
Section \ref{sec:upg_mapping} defines the universal gradient mapping and its properties. 
Section \ref{sec:uni_grad_algs} presents the primal-dual universal gradient methods (both the standard and accelerated variants), and analyzes their convergence.
%Section \ref{sec:related2FW} shows that the well-known Frank-Wolfe algorithm \cite{Jaggi2013} can be viewed as a variant of the dual averaging subgradient algorithm \cite{Nesterov2009b}.
Section \ref{sec:num_experiments} provides numerical illustrations, followed by our conclusions. % to test the performance of our algorithm.
%We close the paper by drawing the conclusions. 
The supplementary material includes the technical proofs and additional implementation details. % detacan be found in the supplementary document.

% Notation and terminology.
\vspace{-2ex}
\paragraph{Notation and terminology:}
For notational simplicity, we work on the $\R^p/\R^n$ spaces with the Euclidean norms.
We denote the Euclidean distance of the vector $\ub$ to a closed convex set $\Xc$ by $\dist{\ub,\Xc}$.
Throughout the paper, $\| \cdot \|$ represents the Euclidean norm for vectors and the spectral norm for the matrices. 
For a convex function $f$, we use $\nabla{f}$ both for its subgradient and gradient, and $f^{*}$ for its Fenchel's conjugate.
% For a convex set $\Xc$, we denote $\delta_{\Xc}$ its indicator function and $s_{\Xc}$ its support function.
% \paragraph{Approximate solutions:}
Our goal is to approximately solve \eqref{eq:constr_cvx} to obtain $\xb_{\epsilon}$ in the following sense:
% Definition 3.1.
\begin{definition}\label{de:approx_sols}
Given an accuracy level $\epsilon > 0$, a point $\xb_{\epsilon}\in\Xc$ is said to be an $\epsilon$-solution of \eqref{eq:constr_cvx} if
\begin{equation*}%\label{eq:approx_sols}
\vspace{-0.5ex}
\vert f(\xb_{\epsilon}) - \fopt\vert \leq \epsilon, ~~\text{and}~~~ \dist{\Ab\xb_{\epsilon} - \bb,\Kc}\leq \epsilon.
\vspace{-0.5ex}
\end{equation*}
\end{definition}
Here, we call $\vert f(\xb_{\epsilon}) - \fopt\vert$ the primal objective residual and $\dist{\Ab\xb_{\epsilon} - \bb,\Kc}$ the feasibility gap. 
\vspace{-.5ex}
\section{Primal-dual preliminaries}\label{sec:primal_dual_form}
\vspace{-.5ex}
In this section, we briefly summarise the primal-dual formulation with some standard assumptions. 
For the ease of presentation, we reformulate \eqref{eq:constr_cvx} by introducing a slack variable $\rb$ as follows:
\begin{equation}\label{eq:constr_cvx2}
\fopt = \min_{\xb\in\Xc, \rb\in\Kc}\set{ f(\xb) :  \Ab\xb - \rb  = \bb},  ~ (\xopt: f(\xopt) =  \fopt).
\end{equation}
Let $\zb \!:=\! [\xb, \rb]$ and $\Zc \!:=\! \Xc \! \times \! \Kc$. 
Then, we have $\mathcal{D} \!:=\! \set{\zb \in \Zc : \Ab\xb \!-\! \rb\! =\! \bb}$ as the feasible set of \eqref{eq:constr_cvx2}.

% The dual problem:
%\vspace{-2ex}
\paragraph{The dual problem:}
The Lagrange function associated with the linear constraint $\Ab\xb - \rb = \bb$ is defined as $\Lc(\xb,\rb,\lbd) := f(\xb) + \iprods{\lbd, \Ab\xb - \rb - \bb}$, and the dual function $d$ of \eqref{eq:constr_cvx2} can be defined and decomposed as follows:
\begin{equation*}\label{eq:dual_func}
%\vspace{-0.5ex}
d(\lbd) := \min_{\substack{\xb \in\Xc \\ \rb\in\Kc}}\set{ f(\xb)  + \iprods{\lbd, \Ab\xb - \rb - \bb}}
=  \underbrace{\displaystyle\min_{\xb \in\Xc}\set{ f(\xb)  + \iprods{\lbd, \Ab\xb - \bb}}}_{d_x(\lbd)} + \underbrace{\displaystyle\min_{\rb\in\Kc} ~ \iprods{\lbd, -\rb}}_{d_r(\lbd)},
\end{equation*}
where $\lbd \in \R^n$ is the dual variable. 
%Due to the separability of $\xb$ and $\rb$, we can decompose $d$ into $d_x$ and $d_r$ as:
%\begin{equation}\label{eq:dual_func_12}
%d(\lbd) = \underbrace{\displaystyle\min_{\xb \in\Xc}\set{ f(\xb)  + \iprods{\lbd, \Ab\xb - \bb}}}_{d_x(\lbd)} + \underbrace{\displaystyle\min_{\rb\in\Kc}\iprods{\lbd, -\rb}}_{d_r(\lbd)}.
%\end{equation}
%where both components $d_x$ and $d_r$ are given explicitly by
%\begin{equation}\label{eq:dual_func_12ab}
%\begin{array}{ll}
%d_x(\lbd) &:= \displaystyle\min_{\xb \in\Xc}\set{ f(\xb)  + \iprods{\lbd, \Ab\xb - \bb}}\\
%d_r(\lbd) &:= \displaystyle\min_{\rb\in\Kc}\iprods{\lbd, -\rb} = -\displaystyle\sup_{\rb\in\Kc}\iprods{\lbd, \rb}.
%\end{array}
%\end{equation}
Then, we  define the dual problem of \eqref{eq:constr_cvx2} as follows:
\begin{equation}\label{eq:dual_prob}
\dopt := \max_{\lbd\in\R^n}d(\lbd) = \max_{\lbd\in\R^n}\Big\{ d_x(\lbd) + d_r(\lbd) \Big\}.
\end{equation}

% Fundamental assumptions.
%\vspace{-3ex}
\paragraph{Fundamental assumptions:}
To characterize the primal-dual relation between \eqref{eq:constr_cvx} and \eqref{eq:dual_prob}, we require the following assumptions \cite{Rockafellar1970}:

% Assumption A.1.
\begin{assumption}\label{as:A1}
The function $f$ is proper, closed, and convex, but not necessarily smooth.
The constraint sets $\Xc$ and $\Kc$ are nonempty, closed, and convex.
The solution set $\Xopt$ of \eqref{eq:constr_cvx} is nonempty. 
Either $\Zc$ is polyhedral or the \emph{Slater's condition} holds. By the Slater's condition, we mean $\mathrm{ri}(\Zc)\cap \set{(\xb, \rb) : \Ab\xb - \rb = \bb} \neq\emptyset$, where $\mathrm{ri}(\Zc)$ stands for the relative interior of $\Zc$.
\end{assumption}
%\begin{equation}\label{eq:slater_cond}
%\mathrm{ri}(\Zc)\cap \set{(\xb, \rb) : \Ab\xb - \rb = \bb} \neq\emptyset,
%\end{equation}
%where $\mathrm{ri}$ stands for the relative interior \cite{Rockafellar1970}.
%Throughout the paper, we assume that \eqref{eq:constr_cvx} satisfies the following fundamental assumptions, which are required for any primal-dual method:

% Our main assumption.

% Strong duality.
\vspace{-2ex}
\paragraph{Strong duality:}
Under Assumption $\mathrm{A}.\ref{as:A1}$, the solution set $\Lbd^{\star}$ of the dual problem \eqref{eq:dual_prob} is also nonempty and  bounded.
Moreover, the \textit{strong duality} holds, i.e., $\fopt = \dopt$. 
%From the classical duality theory, we have $d(\lbd) \leq f(\xb)$ for any $(\xb, \rb, \lbd)\in \mathcal{D}\times\R^n$. 
%Hence, in this case, we can define a convex primal-dual gap function $H$ as follows
%\begin{equation}\label{eq:gap_func}
%H(\wb) := f(\xb) - d(\lbd) \geq 0, ~~\forall\xb \in \mathcal{D}, \forall\lbd\in\R^n,
%\end{equation}
%where $\wb := [\xb, \rb, \lbd]$. 
%Clearly, $H(\wb^{\star}) = 0$ (zero duality gap) for any primal-dual solution $\wb^{\star} = [\xopt, \rb^{\star}, \lbd^{\star}] \in \Xopt\times\Kc^{\star}\times\Lbd^{\star}$.
%In addition, $\wb^{\star}$ is a saddle point of the Lagrange function, i.e., $\Lc(\xopt,\rb^{\star}, \lbd) \leq \Lc(\xopt,\rb^{\star}, \lbd^{\star}) = \fopt = d^{\star} \leq \Lc(\xb, \rb, \lbd^{\star})$ for all $(\xb,\rb)\in\Wc$ and $\lbd\in\R^n$.

%!TEX root = PDUGA_MAIN_arxiv.tex

% 3. Universal proximal-gradient mappings
\vspace{-.5ex}
\section{Universal gradient mappings}\label{sec:upg_mapping}
\vspace{-.5ex}
This section defines the universal gradient mapping and its properties. 
%We will mainly study the dual problem \eqref{eq:dual_prob}.
%We show how to construct the primal sequence $\set{\xb_k}$ in the next section.

% 3.1. Dual reformulation.
\vspace{-1.5ex}
\subsection{Dual reformulation}
\vspace{-.5ex}

We first adopt the composite convex minimization formulation \cite{Nesterov2005c} of \eqref{eq:dual_prob}  in convex optimization for better interpretability as
\begin{equation}\label{eq:dual_composite}
G^{\star} := \min_{\lbd\in\R^n}\set{G(\lbd) := g(\lbd) + h(\lbd)},
\end{equation}
where $G^{\star} = -d^{\star}$, and the correspondence between $(g,h)$ and $(d_x,d_r)$ is as follows:
\begin{equation}\label{eq:dual_components}
\left\{\begin{array}{ll}
g(\lbd) &:=  \displaystyle\max_{\xb\in\Xc}\set{ \iprods{\lbd, \bb - \Ab\xb} - f(\xb)} = -d_x(\lbd),  \\
h(\lbd) &:= \displaystyle\max_{\rb\in\Kc} ~ \iprods{\lbd, \rb} = -d_r(\lbd).
\end{array}\right.
\end{equation}
%Problem \eqref{eq:dual_composite} is a composite convex minimization \cite{Nesterov2005c}. 
Since $g$ and $h$ are generally non-smooth, FISTA and its proximal-based analysis \cite{Beck2009} are not directly applicable. 
Recall the sharp operator defined in \eqref{eq:sharp_oper0},
%We recalled the generalized sharp operator defined in \eqref{eq:sharp_oper0} as follows:
%\begin{equation}\label{eq:sharp_oper}
%[ \xb ]^\sharp_{\Xc,g} := \arg\max_{\mathbf{s} \in \Xc}\set{\iprods{\xb, \mathbf{s}} - g(\mathbf{s})}.
%\end{equation}
%This definition covers two important special cases:
%\vspace{-0.5ex}
%\begin{itemize}
%\item[1.] If $g=0$, then we obtain the so-called linear minimization oracle.
%\item[2.] If $\Xc\equiv\R^p$, then $[ \xb ]^\sharp_{g} = \nabla{g^{*}}(\xb)$. the [sub]gradient of $g^{*}$, where $g^{*}$ is the Fenchel conjugate of $g$.
%\vspace{-0.5ex}
%\end{itemize}
%
%$d_x$ defined in \eqref{eq:dual_func} can be expressed as:
then $g$ can be expressed as
%\vspace{-1ex}
\begin{equation*}
g(\lbd) = \max_{\xb \in \Xc} \set{ \iprods{-\Ab^T\lbd, \xb} - f(\xb) } + \iprods{\lbd, \bb},
%\vspace{-1ex}
\end{equation*}
and we  define the optimal solution to the $g$ subproblem above as follows:
%\vspace{-1ex}
\begin{equation}\label{eq:x_sharp}
\xb^{*}(\lbd) \in \arg\max_{\xb \in \Xc} \set{ \iprods{-\Ab^T\lbd, \xb} - f(\xb) }\equiv [ -\Ab^T\lbd ]^\sharp_{\Xc,f}.
%\vspace{-1ex}
\end{equation} 
%We note that $g$ defined in \eqref{eq:dual_composite} can be expressed as $g(\lbd) = \iprods{\bb,\lbd} +  f^{*}(-\Ab^T\lbd)$, where $f^{*}$ is the Fenchel conjugate of $f$.
%If we define:
%\begin{equation}\label{eq:sharp_oper}
%\xb^{\#}(\mathbf{s}) \in \arg\max_{\xb\in\Xc}\set{\iprods{\mathbf{s}, \xb} - f(\xb)},
%\end{equation}
%then the solution $\xopt(\lbd)$ of \eqref{eq:dual_composite} is given by $\xopt(\lbd) = \xb^{\#}(-\Ab^T\lbd)$, which can be multivalued. We call $\xb^{\#}$ the \textit{sharp}-operator of $f$.
%If $\Xc\equiv\R^p$, then $\xb^{\#}(\mathbf{s}) = \nabla{f^{*}}(\mathbf{s})$, the [sub]gradient of $f^{*}$.
%An oracle call to $G$ requires the \textit{sharp}-operator $\xb^{\#}$ and the linear minimization in $h$.
% 3.1. Properties of the dual objective terms.
%\vspace{-2ex}
%\subsection{ Properties of the dual objective terms}
%\vspace{-1.5ex}
The second term, $h$, depends on the structure of $\Kc$.
We consider three special cases:

%\vspace{-0.7ex}
\noindent\textbf{$(\mathrm{a})$~Sparsity/low-rankness:} 
If $\Kc := \set{\rb\in\R^n : \norm{\rb} \leq \kappa}$ for a given $\kappa \geq 0$ and a given norm $\norm{\cdot}$, then $h(\lbd) = \kappa \norm{\lbd}^{*}$, the scaled dual norm of $\norm{\cdot}$. 
For instance, if $\Kc := \set{\rb\in\R^n : \norm{\rb}_1 \leq \kappa}$, then  $h(\lbd) = \kappa\norm{\lbd}_{\infty}$. 
While the $\ell_1$-norm induces the sparsity of $\xb$, computing $h$ requires the max absolute elements of $\lbd$.
If $\Kc := \set{\rb\in\R^{q_1\times q_2} : \norm{\rb}_{*} \leq \kappa}$ (the nuclear norm), then  $h(\lbd) = \kappa\norm{\lbd}$, the spectral norm. 
The nuclear norm induces the low-rankness of $\xb$. Computing $h$ in this case leads to finding the top-eigenvalue of $\lbd$, which is efficient.

%\vspace{-0.7ex}
\noindent\textbf{$(\mathrm{b})$~Cone constraints:} 
If $\Kc$ is a cone, then $h$ becomes the indicator function $\delta_{\Kc^{*}}$ of its dual cone $\Kc^{*}$. 
Hence, we can handle the inequality constraints and positive semidefinite constraints in \eqref{eq:constr_cvx}.
For instance, if $\Kc \equiv \R^n_{+}$, then $h(\lbd) = \delta_{\R^n_{-}}(\lbd)$, the indicator function of $\R^n_{-} := \set{\lbd\in\R^n : \lbd \leq 0}$.
If $\Kc \equiv \mathcal{S}^p_{+}$, then $h(\lbd) := \delta_{ \mathcal{S}^p_{-}}(\lbd)$, the indicator function of the negative semidefinite matrix cone.

%\vspace{-0.7ex}
\noindent\textbf{$(\mathrm{c})$~Separable structures:} 
If $\Xc$ and $f$ are separable, i.e., $\Xc := \prod_{i=1}^p\Xc_i$ and $f(\xb) := \sum_{i=1}^pf_i(\xb_i)$, then the evaluation of $g$ and its derivatives  can be decomposed into $p$ subproblems.
% When $\Xc$ is absent, evaluating  $g$ requires the computation of the Fenchel conjugate $f^{*}$.
% In many cases, when $f$ is given explicitly, $g$ can be computed explicitly, e.g., quadratic  loss. 

%% 3.2. Holder continuity of the universal gradients.
\vspace{-1.5ex}
\subsection{H\"{o}lder continuity of the dual universal gradient}
\vspace{-.5ex}

Let $\nabla{g}(\cdot)$ be a subgradient of $g$, which can be computed as $\nabla{g}(\lbd) = \bb - \Ab\xb^{*}(\lbd)$. 
%Clearly, this subgradient can be computed as:
%\begin{equation}\label{eq:subgrad}
%\nabla{g}(\lbd) = \bb - \Ab\xb^{*}(\lbd),
%\end{equation}
% Clearly, if either $\Xc$ is bounded and $f$ is continuous on $\Xc$, or $f$ is strongly convex, then $\xopt(\cdot)$ exists.
%  Hence, $\nabla{g}(\cdot)$ is well-defined on $\R^n$.
Next, we define
\begin{equation}\label{eq:holder_continuity}
M_{\nu} \!=\! M_{\nu}(g) := {\!\!\!\!\!}\sup_{\lbd, \tilde{\lbd} \in\R^n,\lbd\neq\tilde{\lbd}} {\!\!\!} \set{ \frac{\norm{\nabla{g}(\lbd) \!-\! \nabla{g}(\tilde{\lbd})}}{ \norm{\lbd - \tilde{\lbd}}^{\nu}} },
\end{equation}
where $\nu \geq 0$ is the H\"{o}lder smoothness order.
Note that the parameter $M_{\nu}$ explicitly depends on $\nu$ \cite{Nesterov2014}. 
We are interested in the case $\nu \in [0, 1]$, and \emph{especially the two extremal cases}, where we either have the Lipschitz gradient that corresponds to $\nu = 1$, or  the bounded subgradient that corresponds to $\nu = 0$.

We require the following condition in the sequel:
\begin{assumption}\label{as:A2} 
$\hat{M}(g) := \displaystyle\inf_{0 \leq \nu \leq 1}M_{\nu}(g) < +\infty$.
\end{assumption}

Assumption A.\ref{as:A2} is reasonable. We explain this claim with the following two examples.
First, if $g$ is subdifferentiable and $\Xc$ is bounded, then $\nabla{g}(\cdot)$ is also bounded. Indeed, we have
\begin{align*}
\norm{\nabla{g}(\lbd)} = \norm{\bb - \mathbf{A}\xb^{*}(\lbd)}  \leq D_{\Xc}^{\Ab} := \sup\{\norm{\bb-\Ab\xb} : \xb\in\Xc\}.
\end{align*}
Hence, we can choose $\nu = 0$ and $\hat{M}_{\nu}(g) = 2D_{\Xc}^{\Ab} < \infty$.

Second, if $f$ is uniformly convex with the convexity parameter $\mu_f > 0$ and the degree $q \geq 2$, i.e., $\iprods{\nabla{f}(\xb) - \nabla{f}(\tilde{\xb}), \xb - \tilde{\xb}} \geq \mu_f\norm{\xb - \tilde{\xb}}^q$ for all $\xb, \tilde{\xb}\in\R^p$, then $g$ defined by \eqref{eq:dual_components} satisfies \eqref{eq:holder_continuity} with $\nu = \frac{1}{q - 1}$ and $\hat{M}_{\nu}(g) = \big(\mu_f^{-1}\norm{\Ab}^2\big)^{\frac{1}{q-1}} < +\infty$, as shown in \cite{Nesterov2014}.
In particular, if $q = 2$, i.e., $f$ is $\mu_f$-strongly convex, then $\nu \!=\! 1$ and $M_{\nu}(g) = \mu_f^{-1}\norm{\Ab}^2$, which is the Lipschitz constant of the gradient $\nabla{g}$.

%% 3.3. Proximal-gradient step for the dual problem.
\vspace{-1.5ex}
\subsection{The proximal-gradient step for the dual problem}
\vspace{-.5ex}

Given $\hat{\lbd}_k\in\R^n$ and  $M_k > 0$, we define
\begin{equation*}\label{eq:Q_M}
\vspace{-0.5ex}
Q_{M_k}(\lbd; \hat{\lbd}_k) := g(\hat{\lbd}_k) + \iprods{\nabla{g}(\hat{\lbd}_k), \lbd - \hat{\lbd}_k} + \frac{M_k}{2}\norm{\lbd - \hat{\lbd}_k}^2
%\vspace{-0.5ex}
\end{equation*}
as an approximate quadratic surrogate of $g$.
Then, we consider the following update rule:
\begin{equation}\label{eq:cvx_subprob}
\lbd_{k+1} := \arg\displaystyle\min_{\lbd\in\R^n}\big\{Q_{M_k}(\lbd; \hat{\lbd}_k) + h(\lbd)\big\} \equiv \prox_{M_k^{-1}h}\left(\hat{\lbd}_k - M_k^{-1}\nabla{g}(\hat{\lbd}_k)\right).
\end{equation}
%where $\prox_{\varphi}$ is the proximal operator of $\varphi$ defined in Section \ref{sec:intro}. % Intriguingly, we can also process $h$ using is subgradient in our framework 
For a given accuracy $\epsilon > 0$, we define
\begin{align}\label{eq:M_upper}
\vspace{-0.5ex}
\widebar{M}_{\epsilon} := \left[\frac{1-\nu}{1 + \nu}\frac{1}{\epsilon}\right]^{\frac{1-\nu}{1 + \nu}}M_{\nu}^{\frac{2}{1 + \nu}}.
\vspace{-0.5ex}
\end{align}

We need to choose the parameter $M_k > 0$ such that $Q_{M_k}$ is an approximate upper surrogate of $g$, i.e., $g(\lbd) \leq Q_{M_k}(\lbd;\lbd_k) + \delta_k$ for some $\lbd\in\R^n$ and $\delta_k \geq 0$.
If $\nu$ and $M_{\nu}$ are known, then we can set $M_k = \widebar{M}_{\epsilon}$ defined by \eqref{eq:M_upper}. 
In this case, $Q_{\widebar{M}_{\epsilon}}$ is an upper surrogate of $g$.
In general, we do not know $\nu$ and $M_{\nu}$. Hence, $M_k$ can be determined via a backtracking line-search procedure.

%!TEX root = PDUGA_MAIN_arxiv.tex

% 4. Novel alternating minimization algorithm
\vspace{-.5ex}
\section{Universal primal-dual gradient methods}\label{sec:uni_grad_algs}
\vspace{-.5ex}
We apply the universal gradient mappings to the dual problem \eqref{eq:dual_composite}, and propose an averaging scheme to construct $\{\xbar_k\}$ for approximating $\xopt$.
Then, we develop an accelerated variant based on the FISTA scheme \cite{Beck2009}, and 
construct another primal sequence $\{\bar{\xbar}_k\}$ for approximating $\xopt$.

% 4.1. Dual proximal-gradient scheme.
\vspace{-1.5ex}
\subsection{Universal primal-dual gradient algorithm}
\vspace{-0.5ex}
Our algorithm is shown in Algorithm 1. The dual steps are simply the universal gradient method in \cite{Nesterov2014}, 
while the new primal step allows to approximate the solution of \eqref{eq:constr_cvx}.

%%%%%%%%%%%%%%%%%%%%%%%%%%%%%%%%%%%%%%%%%%%
%+ Algorithm 3.1.
%%%%%%%%%%%%%%%%%%%%%%%%%%%%%%%%%%%%%%%%%%%
\begin{algorithm}[!ht]\caption{(\textit{Universal Primal-Dual Gradient Method~$\mathrm{(UniPDGrad)}$})}\label{alg:A1}
\begin{algorithmic}
   \STATE {\bfseries Initialization:} 
   Choose an initial point $\lbd_0\in\R^n$ and a desired accuracy level $\epsilon > 0$. \\
   Estimate a value $M_{-1}$ such that $0 < M_{-1} \leq \widebar{M}_{\epsilon}$. 
   Set $S_{-1} = 0$ and $\bar{\xb}_{-1} = \boldsymbol{0}^p$.
   \FOR{$k = 0$ {\bfseries to} $k_{\max}$}
   	\STATE 1. Compute a primal solution $\xb^{*}(\lbd_k) \in [ -\Ab^T\lbd_k]^\sharp_{\Xc,f}$.
	% \begin{equation}\label{eq:primal_step1}
	% \min_{\xb\in\Xc}\set{f(\xb) + \iprods{\Ab^T\lbd_k, \xb}}.
	% \end{equation} 
	\STATE 2. Form $\nabla{g}(\lbd_k) = \bb - \Ab\xb^{*}(\lbd_k)$.
	\STATE 3. \textbf{Line-search:} Set $M_{k,0} = 0.5M_{k-1}$. For $i=0$ to $i_{\max}$, perform the following steps:
	\STATE ~~~3.a. Compute the trial point $\lbd_{k,i} = \prox_{M_{k,i}^{-1}h}\Big(\lbd_k - M_{k,i}^{-1}\nabla{g}(\lbd_k)\Big)$. 
	% \begin{equation}\label{eq:dual_subprob1}
	% \lbd_{k,i} := \prox_{M_{k,i}^{-1}h}\Big(\lbd_k - M_{k,i}^{-1}\nabla{g}(\lbd_k)\Big).
	% \end{equation}
	\STATE ~~~3.b. If the following line-search condition holds:
	\begin{equation*}
	g(\lbd_{k,i}) \leq Q_{M_{k,i}}(\lbd_{k,i};\lbd_k) +\epsilon/2,
	\vspace{-2ex}
	\end{equation*}
	\STATE ~~~~~~then set $i_k = i$ and terminate the line-search loop. Otherwise, set $M_{k, i+1} = 2M_{k,i}$.
	\STATE ~~\textbf{End of line-search}
	\STATE 4. Set $\lbd_{k+1} = \lbd_{k,i_k}$ and $M_k = M_{k, i_k}$. Compute $w_k \!=\! \frac{1}{M_k}$, $S_{k} \!=\! S_{k\!-\!1} \!+\! w_k$, and $\gamma_k \!=\! \frac{w_k}{S_k}$.
	\STATE 5. Compute $\xbar_k = (1-\gamma_k)\xbar_{k-1} + \gamma_k\xb^{*}(\lbd_k)$.
   \ENDFOR
\STATE{\bfseries Output:}  
Return the primal approximation $\xbar_k$ for $\xopt$.
\end{algorithmic}
\end{algorithm}
\noindent\textit{{Complexity-per-iteration:}}
First, computing $\xb^{*}(\lbd_k)$ at Step 1 requires the solution $\xb^{*}(\lbd_k) \in [ -\Ab^T\lbd_k]^\sharp_{\Xc,f}$. 
For many $\Xc$ and $f$, we can compute $\xb^{*}(\lbd_k)$ efficiently and often in a closed form.
Second, in the line-search procedure, we require the solution $\lbd_{k,i}$ at Step 3.a, and the evaluation of $g(\lbd_{k,i})$. 
The total computational cost depends on the proximal operator of $h$ and the evaluations of $g$.
We prove below that our algorithm  requires  two oracle queries of $g$ on average.

%Our main convergence theorem for the universal primal-dual gradient method with the new weighting strategy in Step 4 of Algorithm 1 is given by Theorem \ref{th:primal_recovery} below. The proof is in the supplementary material.
% Theorem 4.2.
\begin{theorem}\label{th:primal_recovery}
%Let $\set{\bar{\xb}_k}$ be the primal sequence generated by Algorithm \ref{alg:A1} with the weighting strategy in Steps 4 and 5.
%Then, the primal sequence $\set{\bar{\xb}_k}$ satisfies
The primal sequence $\set{\bar{\xb}_k}$ generated by the Algorithm \ref{alg:A1} satisfies
%%\begin{align}
%%\vspace{-0.5ex}
%%-\norm{\lbd^{\star}}\dist{\Ab\bar{\xb}_k - \bb, \Kc} \leq f(\bar{\xb}^k) - \fopt & \leq  \norm{\lbd_0} \dist{\Ab\bar{\xb}_k - \bb, \Kc} + \frac{\epsilon}{2}, \label{eq:primal_recovery1} \\
%% \dist{\Ab\bar{\xb}_k - \bb, \Kc} & \leq \frac{4\widebar{M}_{\epsilon}}{k+1}\norm{\lbd_0 - \lbd^{\star}} + \sqrt{\frac{2\widebar{M}_{\epsilon}\epsilon}{k+1}},\label{eq:primal_recovery2}
%%%d^{\star} - d(\lbd_{k\!+\!1}) &\leq \frac{2\widebar{M}_{\epsilon}}{k\!+\!1}\norm{\lbd_0 \!-\! \lbd^{\star}}^2 \!+\! \frac{(k\!+\!1)\epsilon}{2},
%%%\label{eq:dual_certificate3}
%%\vspace{-0.5ex}
%%\end{align}
\begin{align}
\vspace{-0.5ex}
-\norm{\lbd^{\star}}\dist{\Ab\bar{\xb}_k - \bb, \Kc} \leq f(\bar{\xb}^k) - \fopt & \leq  \frac{\widebar{M}_{\epsilon} \norm{\lbd_0}^2}{k+1} + \frac{\epsilon}{2}, \label{eq:primal_recovery1} \\
 \dist{\Ab\bar{\xb}_k - \bb, \Kc} & \leq \frac{4\widebar{M}_{\epsilon}}{k+1}\norm{\lbd_0 - \lbd^{\star}} + \sqrt{\frac{2\widebar{M}_{\epsilon}\epsilon}{k+1}},\label{eq:primal_recovery2}
%d^{\star} - d(\lbd_{k\!+\!1}) &\leq \frac{2\widebar{M}_{\epsilon}}{k\!+\!1}\norm{\lbd_0 \!-\! \lbd^{\star}}^2 \!+\! \frac{(k\!+\!1)\epsilon}{2},
%\label{eq:dual_certificate3}
\vspace{-0.5ex}
\end{align}
where $\widebar{M}_{\epsilon}$ is defined by \eqref{eq:M_upper}, $\lbd^{\star}\in\Lbd^{\star}$ is an arbitrary dual solution, and $\epsilon$ is the desired accuracy.
\end{theorem}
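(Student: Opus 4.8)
The plan is to run the standard universal gradient convergence analysis on the dual composite problem \eqref{eq:dual_composite} to control the dual objective gap, and then translate this into primal guarantees via the averaging scheme in Step~5, exploiting weak duality and the Fenchel-type characterization of $\xb^\star(\lbd_k)$.

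First I would record the per-iteration descent inequality coming from the line-search: by construction, $g(\lbd_{k+1}) \le Q_{M_k}(\lbd_{k+1};\lbd_k) + \epsilon/2$, and since $\lbd_{k+1}$ minimizes $Q_{M_k}(\cdot;\lbd_k)+h(\cdot)$, a routine three-point / completion-of-squares argument gives, for every $\lbd$,
\begin{equation*}
G(\lbd_{k+1}) \le g(\lbd) + \iprods{\nabla g(\lbd_k), \lbd_{k+1} - \lbd} + \tfrac{M_k}{2}\norm{\lbd_{k+1}-\lbd_k}^2 + h(\lbd_{k+1}) + \tfrac{\epsilon}{2},
\end{equation*}
hence by convexity of $g$, $G(\lbd_{k+1}) \le G(\lbd) + \tfrac{M_k}{2}\big(\norm{\lbd-\lbd_k}^2 - \norm{\lbd-\lbd_{k+1}}^2\big) + \tfrac{\epsilon}{2}$. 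Multiplying by $w_k = 1/M_k$, telescoping over $k$, and using that the line-search keeps $M_k \le \widebar M_\epsilon$ (so $S_k \ge (k+1)/\widebar M_\epsilon$), this yields, with $\lbd = \lbd^\star$ a dual optimum, the bound $d^\star - \tfrac{1}{S_k}\sum_{j} w_j d(\lbd_{j+1}) \le \tfrac{\widebar M_\epsilon \norm{\lbd_0 - \lbd^\star}^2}{2(k+1)} + \tfrac{\epsilon}{2}$ (and similarly with $\lbd=\lbd_0$ for the cruder bound feeding \eqref{eq:primal_recovery1}).

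Next I would connect this to the primal. Since $\xb^\star(\lbd_j) \in [-\Ab^T\lbd_j]^\sharp_{\Xc,f}$, we have $-d_x(\lbd_j) = g(\lbd_j) = \iprods{\lbd_j, \bb - \Ab\xb^\star(\lbd_j)} - f(\xb^\star(\lbd_j)) + $ (the $\iprods{\lbd_j,\bb}$ bookkeeping), i.e. $f(\xb^\star(\lbd_j)) + \iprods{\lbd_j, \Ab\xb^\star(\lbd_j) - \bb} = d_x(\lbd_j)$; combining with the $h$ term and using convexity of $f$ plus Jensen on the average $\xbar_k = \tfrac{1}{S_k}\sum_j w_j \xb^\star(\lbd_j)$, one gets an estimate of the form
\begin{equation*}
f(\xbar_k) + \iprods{\hat\lbd, \Ab\xbar_k - \bb} - (\text{slack term from } \Kc) \le \tfrac{1}{S_k}\sum_j w_j d(\lbd_{j+1}) + (\text{error}),
\end{equation*}
so the dual gap bound controls a Lagrangian-type quantity at $\xbar_k$. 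Choosing $\hat\lbd$ appropriately (e.g. $\hat\lbd = 0$ for the upper bound on $f(\xbar_k)-\fopt$, and $\hat\lbd$ proportional to the feasibility residual direction for the lower bound), and invoking $\fopt = d^\star$, gives \eqref{eq:primal_recovery1}. For \eqref{eq:primal_recovery2}, I would use the standard trick: the lower bound $f(\xbar_k) - \fopt \ge -\norm{\lbd^\star}\dist{\Ab\xbar_k - \bb,\Kc}$ together with the upper bound and a strongly-convex-in-$\lbd$ argument (the $\tfrac{M_k}{2}\norm{\cdot}^2$ terms) forces the feasibility residual to be small; quantitatively, writing $\rho_k := \dist{\Ab\xbar_k - \bb,\Kc}$, one derives a quadratic inequality $\tfrac{1}{2}\widebar M_\epsilon^{-1}(k+1)\rho_k^2 \lesssim \norm{\lbd_0-\lbd^\star}\rho_k + \norm{\lbd_0-\lbd^\star}^2 + \epsilon(k+1)\widebar M_\epsilon^{-1}$ (roughly) and solving it produces the two-term bound $\tfrac{4\widebar M_\epsilon}{k+1}\norm{\lbd_0-\lbd^\star} + \sqrt{2\widebar M_\epsilon \epsilon/(k+1)}$.

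The main obstacle I anticipate is the primal translation step: carefully choosing the test dual vector $\hat\lbd$ and bounding the slack/$\Kc$-projection terms so that the $h$-component (which may be an indicator of a cone, hence $+\infty$ off the dual cone) does not blow up — this is exactly where the $\dist{\cdot,\Kc}$ quantity and the factor $\norm{\lbd^\star}$ enter, and getting the constants ($4$ and the $\sqrt{2}$) right requires threading the line-search guarantee $M_k \le \widebar M_\epsilon$ through both the descent telescoping and the quadratic inequality simultaneously. The dual-side analysis itself is essentially the FISTA-free universal gradient argument of \cite{Nesterov2014} adapted to the $\epsilon/2$-inexact surrogate, which is routine.
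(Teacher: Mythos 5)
Your plan follows essentially the same route as the paper's proof: the inexact proximal-gradient (three-point) estimate, the identity $g(\lbd_i)+\iprods{\nabla g(\lbd_i),\lbd-\lbd_i} = -f(\xb^{*}(\lbd_i))+\iprods{\lbd,\bb-\Ab\xb^{*}(\lbd_i)}$ to transfer the telescoped bound to the weighted primal average, $\lbd=\mathbf{0}$ for the objective upper bound, the Lagrangian saddle-point inequality for the lower bound, and an optimization over $\lbd$ against the $(2S_k)^{-1}\norm{\lbd-\lbd_0}^2$ term (which the paper carries out via a min--max swap over $\rb\in\Kc$ and $\lbd$ using Sion's theorem) to obtain a quadratic inequality in the feasibility residual. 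Two minor corrections: the line-search only guarantees $M_k\le 2\widebar{M}_{\epsilon}$, so $S_k\ge (k+1)/(2\widebar{M}_{\epsilon})$ (this is where the constants $4$ and $\sqrt{2}$ come from), and the correct quadratic inequality is $\tfrac{S_k}{2}\rho^2-\norm{\lbd_0-\lbd^{\star}}\rho\le \tfrac{\epsilon}{2}$ without the extra $\norm{\lbd_0-\lbd^{\star}}^2$ and $\epsilon(k+1)/\widebar{M}_{\epsilon}$ terms you wrote, since with those terms the first part of \eqref{eq:primal_recovery2} would only decay like $(k+1)^{-1/2}$ rather than $(k+1)^{-1}$.
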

% c) The worst-case complexity.

%%Finally, we determine the number of iterations $k$ and the accuracy level $\epsilon$ of the dual problem so that we get an $\epsilon$-solution for the primal problem \eqref{eq:constr_cvx}.
%%Since the dual solution set $\boldsymbol{\Lbd}^{\star}$ of \eqref{eq:dual_prob} is nonempty, convex and bounded, we define:
%%\begin{equation}\label{eq:D_Lbd}
%%D_{\boldsymbol{\Lbd}^{\star}}(\lbd_0) := \inf_{\lbd^{\star}\in \boldsymbol{\Lbd}^{\star}}\norm{\lbd^{\star} - \lbd_0},
%%\end{equation}
%%the distance from $\lbd_0$ to the dual solution set $\boldsymbol{\Lbd}^{\star}$.
%%We also set $D_d := D_{\boldsymbol{\Lbd}^{\star}}(\lbd_0)$ and $D_d^0 := D_{\boldsymbol{\Lbd}^{\star}}(\boldsymbol{0}^n)$.

% d) The worst-case complexity.
\vspace{-1ex}
\noindent\textit{{The worst-case analytical complexity:}}
%\noindent\textbf{The worst-case analytical complexity:}
%The analysis of \cite{Nesterov2014} can be applied to determine the worst-case complexity of Algorithm \ref{alg:A1} in the dual. However, we need to establish 
We establish the total number of iterations $k_{\max}$ to achieve an $\epsilon$-solution $\xbar_k$ of \eqref{eq:constr_cvx}. The supplementary material proves that
%\begin{equation}\label{eq:worst_case_UPA}
%k_{\max} := \left\lfloor2\norm{\lbd_0 - \lbd^{\star}}^2\inf_{0\leq \nu\leq 1}\Big(\frac{M_{\nu}}{\epsilon}\Big)^{\frac{2}{1+\nu}}\right\rfloor,
%\end{equation}
\begin{equation}\label{eq:worst_case_UPA}
\vspace{-0.5ex}
%k_{\max} = \left\lfloor\frac{4\norm{\lbd^{\star}}\norm{\lbd^{\star}}_+}{(3-\sqrt{5})}\inf_{0\leq\nu\leq 1}\left(\frac{M_{\nu}}{\epsilon}\right)^{\frac{2}{1+\nu}}\right\rfloor,
k_{\max} = \left\lfloor \left[ \frac{4\sqrt{2}\norm{\lbd^\star}}{-1+\sqrt{1+8\frac{\norm{\lbd^\star}}{\norm{\lbd^\star}_{[1]}}}} \right]^2 \inf_{0\leq\nu\leq 1}\left(\frac{M_{\nu}}{\epsilon}\right)^{\frac{2}{1+\nu}}\right\rfloor,
\end{equation}
where $\norm{\lbd^\star}_{[1]} = \max{\{\norm{ \lbd^{\star}},1\}}$. 
This complexity  is optimal for $\nu = 0$, but not for $\nu > 0$ \cite{Nemirovskii1983}.

At each iteration $k$, the linesearch procedure at Step 3 requires the evaluations of $g$. The supplementary material bounds 
the total number $N_1(k)$ of oracle queries, including the function $G$ and its gradient evaluations, up to the $k$th iteration as follows:
\begin{equation}\label{eq:N_G}
\vspace{-0.5ex}
N_1(k) \leq 2(k+1) +1 - \log_2({M_{-1}}) \!+\! \inf_{0\leq\nu\leq 1}\set{\frac{1\!-\!\nu}{1\!+\!\nu}\log_2\left(\frac{(1\!-\!\nu)}{(1\!+\!\nu)\epsilon}\right) \!+\! \frac{2}{1\!+\!\nu}\log_2M_{\nu}}.
\end{equation}
Hence, we have $N_1(k) \approx 2(k+1)$, i.e., we  require approximately two oracle queries at each iteration on the average.
%Details for obtaining these results can be found in the supplementary material.

%%Assuming the choice of $\lbd_0 =  \boldsymbol{0}^n$ and using the convergence bounds given in Theorem \ref{th:primal_recovery}, 
%%we can show that the total number of iterations $k_{\max}$ to achieve an $\epsilon$-solution $\xbar_k$ of \eqref{eq:constr_cvx} is 
%%\begin{equation*}
%%\mathcal{O}\left(D_{\boldsymbol{\Lbd}^{\star}}^2\inf_{0\leq\nu\leq 1}\Big(\frac{M_{\nu}}{\epsilon}\Big)^{\frac{2}{1+\nu}}\right),
%%\end{equation*}
%%where $D_{\boldsymbol{\Lbd}^{\star}} = \dist{ \boldsymbol{0}^n, \boldsymbol{\Lbd}^{\star}}$.
%%This worst-case complexity is optimal for $\nu = 0$. 

% c) The full algorithm.
%%\noindent\textbf{$\mathrm{(c)}$~The full algorithm:}
%%By combining the dual proximal-gradient step \eqref{eq:grad_scheme} and the primal averaging step \eqref{eq:average_primal_sequence}, we obtain a universal primal-dual proximal-gradient algorithm ($\mathrm{UniProxGrad}$) as  specified in Algorithm \ref{alg:A1}.

%!TEX root = PDUGA_MAIN_arxiv.tex

% 3. Novel alternating minimization algorithm

%%%%%%%%%%%%%%%%%%%%%%%%%%%%%%%%%%%%%%%%%%%
%% 2. The accelerated universal proximal-gradient scheme
\vspace{-1.5ex}
\subsection{Accelerated universal primal-dual gradient method}
\vspace{-0.5ex}
%%%%%%%%%%%%%%%%%%%%%%%%%%%%%%%%%%%%%%%%%%%
We now develop an accelerated  scheme for solving \eqref{eq:dual_composite}.
Our scheme is different from \cite{Nesterov2014} in two key aspects.
First, we adopt the FISTA \cite{Beck2009} scheme to obtain the dual sequence since it requires %only one $\mathrm{prox}_h$ operator of $h$ as opposed to two $\mathrm{prox}_g$ computations of the accelerated scheme in \cite{Nesterov2014}. 
less  $\mathrm{prox}$ operators compared to the fast scheme in \cite{Nesterov2014}.
Second, we perform the line-search after computing $\nabla{g}(\hat{\lbd}_k)$, which can reduce the number of the sharp-operator computations  of $f$ and $\Xc$. Note that the application of FISTA to the dual function is not novel per se. However, we claim that our theoretical characterization of this classical scheme based on the H\"{o}lder continuity assumption in the composite minimization setting is  new. 

%%%%%%%%%%%%%%%%%%%%%%%%%%%%%%%%%%%%%%%%%%%
%+ Algorithm 3.1.
%%%%%%%%%%%%%%%%%%%%%%%%%%%%%%%%%%%%%%%%%%%
\begin{algorithm}[!ht]\caption{(\textit{Accelerated Universal Primal-Dual Gradient Method}~$\mathrm{(AccUniPDGrad)}$)\!\!\!\!\!}\label{alg:A2}
\begin{algorithmic}
   \STATE {\bfseries Initialization:} 
   Choose an initial point $\lbd_0 = \hat{\lbd}_0 \in\R^n$ and an accuracy level $\epsilon > 0$. \\
   Estimate a value $M_{-1}$ such that $0 < M_{-1} \!\leq\! \widebar{M}_{\epsilon}$.
   Set $\hat{S}_{-1} = 0$,  $t_0 = 1$ and $\bar{\xbar}_{-1} = \boldsymbol{0}^p$.
   \FOR{$k = 0$ {\bfseries to} $k_{\max}$}
   	\STATE 1. Compute a primal solution $\xb^{*}(\hat{\lbd}_k) \in [ -\Ab^T\hat{\lbd} ]^\sharp_{\Xc,f}$. 
	\STATE 2. Form $\nabla{g}(\hat{\lbd}_k) = \bb - \Ab\xb^{*}(\hat{\lbd}_k)$.
	\STATE 3. \textbf{Line-search:} Set $M_{k,0} = M_{k-1}$. For $i=0$ to $i_{\max}$, perform the following steps:
	\STATE ~~~3.a. Compute the trial point $\lbd_{k,i} = \prox_{M_{k,i}^{-1}h}\big(\hat{\lbd}_k - M_{k,i}^{-1}\nabla{g}(\hat{\lbd}_k)\big)$.
%	\begin{equation}\label{eq:dual_subprob}
%	\lbd_{k,i} := \prox_{M_{k,i}^{-1}h}\big(\hat{\lbd}_k - M_{k,i}^{-1}\nabla{g}(\hat{\lbd}_k)\big).
%	\end{equation}
	\STATE ~~~3.b. If the following line-search condition holds:
	\begin{equation*}
	g(\lbd_{k,i}) \leq Q_{M_{k,i}}(\lbd_{k,i}; \hat{\lbd}_k) + \epsilon/(2t_k),
	\vspace{-2ex}
	\end{equation*}
	\STATE ~~~~~~then $i_k = i$, and terminate the line-search loop. Otherwise, set $M_{k, i+1} = 2M_{k,i}$.
	\STATE ~~\textbf{End of line-search}
	\STATE 4. Set $\lbd_{k+1} = \lbd_{k,i_k}$ and $M_k = M_{k, i_k}$. Compute $w_k \!=\! \frac{t_k}{M_k}$, $\hat{S}_k \!=\! \hat{S}_{k\!-\!1} \!+\! w_k$, and $\gamma_k \!=\! {w_k}/{\hat{S}_k}$.
	\STATE 5. Compute $t_{k+1} = 0.5\big[1 + \sqrt{1 + 4t_k^2}\big]$ and update $\hat{\lbd}_{k+1} = \lbd_{k+1} + \frac{t_k-1}{t_{k+1}}\big(\lbd_{k+1} - \lbd_k\big)$. 
   	% \STATE 6. Compute $\hat{\lbd}_{k+1} := \lbd_{k+1} + \frac{t_k-1}{t_{k+1}}\big(\lbd_{k+1} - \lbd_k\big)$.
	% \STATE 7. Compute $w_k \!:=\! \frac{t_k}{M_k}$, $\hat{S}_k \!:=\! \hat{S}_{k\!-\!1} \!+\! w_k$, and $\gamma_k \!:=\! \frac{w_k}{\hat{S}_k}$.
	\STATE 6. Compute $\bar{\xbar}_k = (1-\gamma_k)\bar{\xbar}_{k-1} + \gamma_k\xb^{*}(\hat{\lbd}_k)$.
   \ENDFOR
\STATE{\bfseries Output:} Return the primal approximation $\bar{\xbar}_k$ for $\xopt$.
\end{algorithmic}
\end{algorithm}

%\vspace{-1ex}
\noindent\textit{{Complexity per-iteration:}}
The per-iteration complexity of Algorithm \ref{alg:A2} remains essentially the same as that of Algorithm \ref{alg:A1}.
%However, as we will show below, this algorithm only  requires approximately  a single oracle query of $g$ at each iteration on average.

%\vspace{-3mm}
%% Theorem 4.2.
\begin{theorem}\label{th:primal_recovery2}
%Let $\set{\bar{\xbar}_k}$ be the primal sequence generated by Algorithm \ref{alg:A2} with the weighting strategy in Steps 4 and 6. 
%Then, the primal sequence $\set{\bar{\xbar}_k}$ satisfies
The primal sequence $\set{\bar{\xbar}_k}$ generated by the Algorithm \ref{alg:A2} satisfies
\begin{align}
%-\norm{\lbd^{\star}}\dist{\Ab\bar{\xbar}_k \!\!-\! \bb, \Kc} \!\leq\! f(\bar{\xbar}^k) \!-\! \fopt & \leq \frac{\epsilon}{2} + \norm{\lbd_0}\dist{\Ab\bar{\xbar}_k \!-\! \bb, \Kc}, \label{eq:primal_recovery1a}\\
-\norm{\lbd^{\star}}\dist{\Ab\bar{\xbar}_k \!\!-\! \bb, \Kc} \!\leq\! f(\bar{\xbar}^k) \!-\! \fopt & \leq \frac{\epsilon}{2} + \frac{4 \widebar{M}_{\epsilon} \norm{\lbd_0}^2,}{(k\!+\!2)^{\frac{1+3\nu}{1+\nu}}} \label{eq:primal_recovery1a}\\
\dist{\Ab\bar{\xbar}_k \!-\! \bb, \Kc} & \leq \frac{16\widebar{M}_{\epsilon}}{(k\!+\!2)^{\frac{1+3\nu}{1+\nu}}}\norm{\lbd_0 \!-\! \lbd^{\star}} + \sqrt{\frac{8\widebar{M}_{\epsilon}\epsilon}{(k\!+\!2)^{\frac{1+3\nu}{1+\nu}}}},\label{eq:primal_recovery2a}
%d^{\star} - d(\lbd_k) &\leq  \frac{4\widebar{M}_{\epsilon}}{(k+1)^{\frac{1+3\nu}{1+\nu}}}\norm{\lbd_0 - \lbd^\star}^2 + \frac{\epsilon\widebar{M}_{\epsilon}}{M_0}(k+1)^\frac{1-\nu}{1+\nu}, \label{eq:dual_certificate3a}
\vspace{-0.15ex}
\end{align}
where $\widebar{M}_{\epsilon}$ is defined by \eqref{eq:M_upper}, $\lbd^{\star}\in\Lbd^{\star}$ is an arbitrary dual solution, and $\epsilon$ is the desired accuracy. 
\end{theorem}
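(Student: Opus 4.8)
The plan is to keep the three-step skeleton of the proof of Theorem~\ref{th:primal_recovery}, with the dual gradient step replaced by one FISTA iteration \cite{Beck2009} on the composite dual $G = g+h$ of \eqref{eq:dual_composite}, and with the running weighted average of Step~6 playing the role of the averaging in Algorithm~\ref{alg:A1}. First I establish a convergence estimate for the inexact, variable-metric FISTA iteration $\{\lbd_k,\hat{\lbd}_k\}$; then I control $M_k$ and the weight sum $\hat{S}_k$ through the line-search; finally I convert the dual estimate into the two-sided primal bounds \eqref{eq:primal_recovery1a}--\eqref{eq:primal_recovery2a}. Throughout, every constant is kept independent of $\nu$, so that the infimum over $\nu\in[0,1]$ hidden in $\widebar{M}_{\epsilon}$ is taken only at the very end.

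\textbf{Dual estimate.} Fix $\lbd^{\star}\in\Lbd^{\star}$ and set $v_k := G(\lbd_k) - G^{\star}$, $\delta_k := \epsilon/(2t_k)$. Optimality of $\lbd_{k+1}$ in the prox-step~3.a together with the line-search test~3.b (an \emph{inexact} quadratic upper model with slack $\delta_k$) gives, by convexity of $g$ and $h$, the composite descent inequality $G(\lbd_{k+1}) \le G(\lbd) + \tfrac{M_k}{2}\norm{\hat{\lbd}_k - \lbd}^2 - \tfrac{M_k}{2}\norm{\lbd_{k+1} - \lbd}^2 + \delta_k$ for every $\lbd\in\R^n$. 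Using this at $\lbd = \lbd_k$ and $\lbd = \lbd^{\star}$, forming the FISTA combination with weights $t_k-1$ and $1$, multiplying by $t_k/M_k$, and invoking $t_k(t_k-1) = t_{k-1}^2$ (convention $t_{-1}^2 := t_0^2 - t_0 = 0$), $M_k \ge M_{k-1}$ (forced because Step~3 starts from $M_{k,0} = M_{k-1}$ and only doubles), and the extrapolation rule of Step~5 to telescope the quadratic terms via the auxiliary sequence $b_j := t_j\hat{\lbd}_j - (t_j-1)\lbd_j$ (with $b_0 = \lbd_0$), I obtain
\begin{equation*}
\frac{t_k^2}{M_k}\, v_{k+1} \;\le\; \frac12\norm{\lbd_0 - \lbd^{\star}}^2 + \sum_{j=0}^{k}\frac{t_j^2}{M_j}\,\delta_j \;=\; \frac12\norm{\lbd_0 - \lbd^{\star}}^2 + \frac{\epsilon}{2}\,\hat{S}_k ,
\end{equation*}
since $\tfrac{t_j^2}{M_j}\delta_j = \tfrac{\epsilon}{2}\,\tfrac{t_j}{M_j} = \tfrac{\epsilon}{2}w_j$ with $w_j$ as in Step~4; the choice $\delta_j=\epsilon/(2t_j)$ is precisely what makes the accumulated slack match $\tfrac{\epsilon}{2}\hat S_k$. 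Re-running the same derivation with comparison point $\lbd = 0$ (admissible since $h(0)=\max_{\rb\in\Kc}\iprods{0,\rb}=0$) replaces $\norm{\lbd_0-\lbd^{\star}}^2$ by $\norm{\lbd_0}^2$, which is the form used for the primal residual.

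\textbf{Line-search, rate, and primal recovery.} By the Young-type bound of \cite{Nesterov2014}, for each $\nu\in[0,1]$ the test~3.b holds as soon as $M_{k,i} \ge t_k^{\frac{1-\nu}{1+\nu}}\widebar{M}_{\epsilon}$; hence, from $M_{-1}\le\widebar{M}_{\epsilon}$ and the doubling rule, $M_k \le 2\, t_k^{\frac{1-\nu}{1+\nu}}\widebar{M}_{\epsilon}$. Since Step~5 yields $t_k \ge (k+2)/2$, this gives $\tfrac{M_k}{t_k^2} \le 4\widebar{M}_{\epsilon}\,(k+2)^{-\frac{1+3\nu}{1+\nu}}$, while summing $w_j = \tfrac{t_j}{M_j} \ge \tfrac{1}{2\widebar{M}_{\epsilon}}t_j^{\frac{2\nu}{1+\nu}}$ and using the telescoped relation $\tfrac{t_k^2}{M_k}\le\hat{S}_k$ sandwiches $\hat{S}_k$ between two multiples of $(k+2)^{\frac{1+3\nu}{1+\nu}}/\widebar{M}_{\epsilon}$. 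For the primal bounds one then parallels Theorem~\ref{th:primal_recovery}: with $\bar{\xbar}_k = \hat{S}_k^{-1}\sum_{j=0}^{k}w_j\,\xb^{*}(\hat{\lbd}_j)\in\Xc$, combining the descent inequality at a free $\lbd$ with the one at $\lbd_j$, the lower bound $g(\lbd) \ge \iprods{\lbd,\bb-\Ab\xb^{*}(\hat{\lbd}_j)} - f(\xb^{*}(\hat{\lbd}_j))$ and weak duality $-G(\lbd_{j+1})\le\fopt$, multiplying by $w_j$ and summing, the quadratic terms telescope (again via $b_j$) to $\tfrac12\norm{\lbd-\lbd_0}^2$ and the slacks sum to $\tfrac\epsilon2\hat S_k$; Jensen on $\bar{\xbar}_k$ and division by $\hat S_k$ give $f(\bar{\xbar}_k) - \fopt + \iprods{\lbd,\Ab\bar{\xbar}_k - \bb - \bar{\rb}_k} \le \tfrac\epsilon2 + \tfrac{4\widebar{M}_{\epsilon}\norm{\lbd-\lbd_0}^2}{(k+2)^{(1+3\nu)/(1+\nu)}}$ for all $\lbd$ and a suitable $\bar{\rb}_k\in\Kc$ — taking care that $h$ is evaluated only at the prox iterates $\lbd_{j+1}$, never at the extrapolated $\hat{\lbd}_j$ (crucial for cone constraints). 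Setting $\lbd = 0$ yields the upper estimate in \eqref{eq:primal_recovery1a}; the lower estimate $-\norm{\lbd^{\star}}\dist{\Ab\bar{\xbar}_k-\bb,\Kc}\le f(\bar{\xbar}_k)-\fopt$ is Lagrangian weak duality, $\fopt = d(\lbd^{\star}) \le f(\bar{\xbar}_k) + \iprods{\lbd^{\star},\Ab\bar{\xbar}_k-\bb-\Pi_{\Kc}(\Ab\bar{\xbar}_k-\bb)}$ plus Cauchy--Schwarz; and plugging $\lbd = \lbd^{\star}$ plus a scaled unit vector along $\Ab\bar{\xbar}_k-\bb-\Pi_{\Kc}(\Ab\bar{\xbar}_k-\bb)$ into the free-$\lbd$ bound and combining with the lower estimate produces a quadratic inequality $r^2 \le a\,r + b$ in $r := \dist{\Ab\bar{\xbar}_k-\bb,\Kc}$ with $a = \Theta\big(\widebar{M}_{\epsilon}\norm{\lbd_0-\lbd^{\star}}/(k+2)^{\frac{1+3\nu}{1+\nu}}\big)$ and $b = \Theta\big(\widebar{M}_{\epsilon}\epsilon/(k+2)^{\frac{1+3\nu}{1+\nu}}\big)$; solving via $r\le a+\sqrt{b}$ gives \eqref{eq:primal_recovery2a}, and $\inf_{0\le\nu\le1}$ inside $\widebar{M}_{\epsilon}$ closes both theorems.

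\textbf{Main obstacle.} The delicate point is the dual step: making the classical FISTA telescoping robust to the \emph{simultaneous} presence of a changing metric $M_k$ (tamed by $M_{k,0}=M_{k-1}$, which forces monotonicity) and of the inexact line-search model, and then verifying that the accumulated slack $\sum_j \tfrac{t_j^2}{M_j}\delta_j$, with the deliberate $\delta_j = \epsilon/(2t_j)$ and weights $w_j=t_j/M_j$, collapses after normalization by $\hat{S}_k$ to exactly $\epsilon/2$ rather than a term growing with $k$; this is exactly what forces the two-sided control of $\hat{S}_k$ above. A secondary but genuine care point is never evaluating $h$ at the extrapolated iterates $\hat{\lbd}_k$, and keeping the adversarial-$\lbd$ / quadratic-inequality step for the feasibility gap consistent with the non-accelerated case.
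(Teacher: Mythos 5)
Your dual estimate and primal-recovery mechanism are essentially the paper's own proof in a different parametrization: your auxiliary sequence $b_j = t_j\hat{\lbd}_j - (t_j-1)\lbd_j$ is exactly the sequence $\tilde{\lbd}_j$ of the scheme \eqref{eq:fast_uproxgrad} (cf. Lemma \ref{le:fast_uproxgrad}), the relation $t_j(t_j-1)=t_{j-1}^2$ is \eqref{eq:update_tau_cond} with $t_j=1/\tau_j$, and the weights $w_j=t_j/M_j$, the slack $\delta_j=\epsilon/(2t_j)$ collapsing to $\tfrac{\epsilon}{2}\hat{S}_k$, the use of $M_k\geq M_{k-1}$, the bound $\hat{S}_k\geq t_k^2/M_k \geq (k+2)^{\frac{1+3\nu}{1+\nu}}/(8\widebar{M}_{\epsilon})$, the choice $\lbd=\mathbf{0}^n$ for \eqref{eq:primal_recovery1a}, and the saddle-point lower bound all coincide with the steps of Section \ref{sec:conv_app_acc}. (Your intermediate constant $4$ in the bound on $M_k/t_k^2$ should be $8$, but this is immaterial since the final bounds only use the lower bound on $\hat{S}_k$.)

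The one place where your sketch is not yet a proof is the feasibility gap. The weighted-sum estimate yields, for every $\lbd$, $f(\bar{\xbar}_k)-\fopt\leq \iprods{\bb-\Ab\bar{\xbar}_k,\lbd}+h(\lbd)+\tfrac{\epsilon}{2}+\tfrac{1}{2\hat{S}_k}\norm{\lbd-\lbd_0}^2$, i.e.\ the slack vector in $\Kc$ hidden in $h(\lbd)$ is $\rb^{*}(\lbd)\in\arg\max_{\rb\in\Kc}\iprods{\lbd,\rb}$ and depends on $\lbd$. Your claimed inequality ``for all $\lbd$ and a suitable $\bar{\rb}_k\in\Kc$'' with a single $\lbd$-independent $\bar{\rb}_k$ does not follow from this (replacing $h(\lbd)$ by $\iprods{\lbd,\bar{\rb}_k}$ decreases the right-hand side, i.e.\ goes the wrong way), and plugging $\lbd=\lbd^{\star}+\rho u$ with $u$ along $\Ab\bar{\xbar}_k-\bb-\Pi_{\Kc}(\Ab\bar{\xbar}_k-\bb)$ directly into the free-$\lbd$ bound can be vacuous: for cone constraints $h$ is an indicator function and $h(\lbd^{\star}+\rho u)$ may be $+\infty$. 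The missing step is exactly the min--max interchange used in the proof of Theorem \ref{th:primal_recovery}: combine the saddle-point bound with the free-$\lbd$ estimate, take $\min_{\rb\in\Kc}$ inside and $\max_{\lbd}$ outside, swap them by Sion's minimax theorem (or an equivalent conjugacy computation), and only then does a single $\bar{\rb}\in\Kc$ exist with $\iprods{\Ab\bar{\xbar}_k-\bb-\bar{\rb},\lbd_0-\lbd^{\star}}+\tfrac{\hat{S}_k}{2}\norm{\Ab\bar{\xbar}_k-\bb-\bar{\rb}}^2\leq\tfrac{\epsilon}{2}$, after which Cauchy--Schwarz and the quadratic inequality give \eqref{eq:primal_recovery2a}. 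Since you intend to mirror the non-accelerated case anyway, the fix is to import that argument verbatim with $S_k$ replaced by $\hat{S}_k$; as written, the adversarial-$\lbd$ shortcut is a genuine gap.
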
 

%Note that while the primal sequence does converge to the primal optimal, the dual objective may not converge with the chosen dual sequence and requires averaging for convergence for $\nu=0$.

%% 5.3. The full-algorithm.
%%\noindent\textbf{$\mathrm{(c)}$~The full-algorithm:}
%%Combing the accelerated dual proximal-gradient scheme \eqref{eq:fast_uproxgrad_short} and the primal step \eqref{eq:x_primal}, we obtain the  full-algorithm ($\mathrm{AccUniProxGrad}$) as presented in Algorithm \ref{alg:A2}.

%% 5.5. The worst-case analytical complexity.
\vspace{-1ex}
\noindent\textit{{The worst-case analytical complexity:}}
%\noindent\textbf{$\mathrm{(d)}$~The worst-case analytical complexity:}
The supplementary material proves the following worst-case  complexity of Algorithm \ref{alg:A2} to achieve an $\epsilon$-solution $\bar{\xbar}_k$: 
\begin{align}\label{eq:complexity_est2}
\vspace{-0.5ex}
%k_{\max} = \left\lfloor \left(32\norm{\lbd^\star}\norm{\lbd^\star}_+\right)^{\frac{1+\nu}{1+3\nu}} \inf_{0\leq\nu\leq 1}\bigg(\frac{M_\nu}{\epsilon} \bigg)^{\frac{2}{1+3\nu}}\right\rfloor.
k_{\max} = \left\lfloor \left[ \frac{8\sqrt{2}\norm{\lbd^\star}}{-1 + \sqrt{1 + 8\frac{\norm{\lbd}}{\norm{\lbd}_{[1]}}}} \right]^{\frac{2+2\nu}{1+3\nu}}\inf_{0\leq\nu\leq 1}\bigg(\frac{M_\nu}{\epsilon} \bigg)^{\frac{2}{1+3\nu}}\right\rfloor.
\end{align}  
This worst-case complexity is optimal in the sense of first-order black box models \cite{Nemirovskii1983}.

The line-search procedure at Step 3 of Algorithm \ref{alg:A2}  also terminates after a finite number of iterations.
Similar to Algorithm \ref{alg:A1}, Algorithm \ref{alg:A2} requires $1$ gradient query and $i_k$ function evaluations of $g$ at each iteration.
The supplementary material proves that the number of oracle queries in Algorithm \ref{alg:A2} is upperbounded as follows:
\begin{equation}\label{eq:N_2}
%\vspace{-0.5ex}
N_2(k) \leq 2(k+1) + 1 + \frac{1-\nu}{1+\nu}\left[\log_2(k+1) - \log_2(\epsilon)\right] + \frac{2}{1+\nu}\log_2(M_{\nu}) - \log_2({M}_{-1}).
\end{equation}  
Roughly speaking, Algorithm \ref{alg:A2}   requires approximately two oracle query per iteration on average. 
\vspace{-.5ex}
\section{Numerical experiments}\label{sec:num_experiments}
\vspace{-.5ex}
This section illustrates the scalability and the flexibility of our primal-dual framework using some applications in the quantum tomography (QT) and the matrix completion (MC). 

\vspace{-1.5ex}
\subsection{Quantum tomography with Pauli operators}
\vspace{-0.7ex}
We consider the QT problem which aims to extract information from a  physical quantum system. A $q$-qubit quantum system is mathematically characterized by its density matrix, which is a complex $p\times p$ positive semidefinite Hermitian matrix $\Xb^\natural\in \mathcal{S}^{p}_{+}$, where $p=2^q$. Surprisingly, we can provably deduce the state from performing compressive linear measurements $\bb = \Ac(\Xb) \in \mathcal{C}^n$ based on Pauli operators $\Ac$ \cite{Gross2010}.  While the size of the density matrix grows exponentially in $q$, a significantly fewer compressive measurements (i.e., $n\!=\!\mathcal{O}(p \log p) $) suffices to recover a pure state $q$-qubit density matrix as a result of the following convex optimization problem:{\!\!}
\begin{equation}\label{eq:quantum_tomo}
\vspace{-.7ex}
\varphi^{\star} \!=\!\! \min_{\Xb\in\mathcal{S}^p_{+}}\!\!\set{\varphi(\Xb) \!:=\! \frac{1}{2}\norm{\Ac(\Xb) \!-\! \bb}_2^2 : \mathrm{tr}(\Xb) = 1},~~ (\Xb^\star: \varphi(\Xb^\star) =  \varphi^{\star}),
\end{equation}
where the constraint ensures that $\Xb^\star$ is a density matrix. The recovery is also robust to noise \cite{Gross2010}. 

Since the objective function has Lipschitz gradient and the constraint (i.e., the Spectrahedron) is tuning-free, the QT problem provides an ideal scalability test for both our framework and GCG-type algorithms. To verify the performance of the algorithms with respect to the optimal solution in large-scale, we remain within the noiseless setting. However, the timing and the convergence behavior of the algorithms remain qualitatively the same under polarization and additive Gaussian noise. 

 \begin{figure*}[!ht]
 \begin{center}
  \vspace{-1ex}
\includegraphics[width=1.0\textwidth]{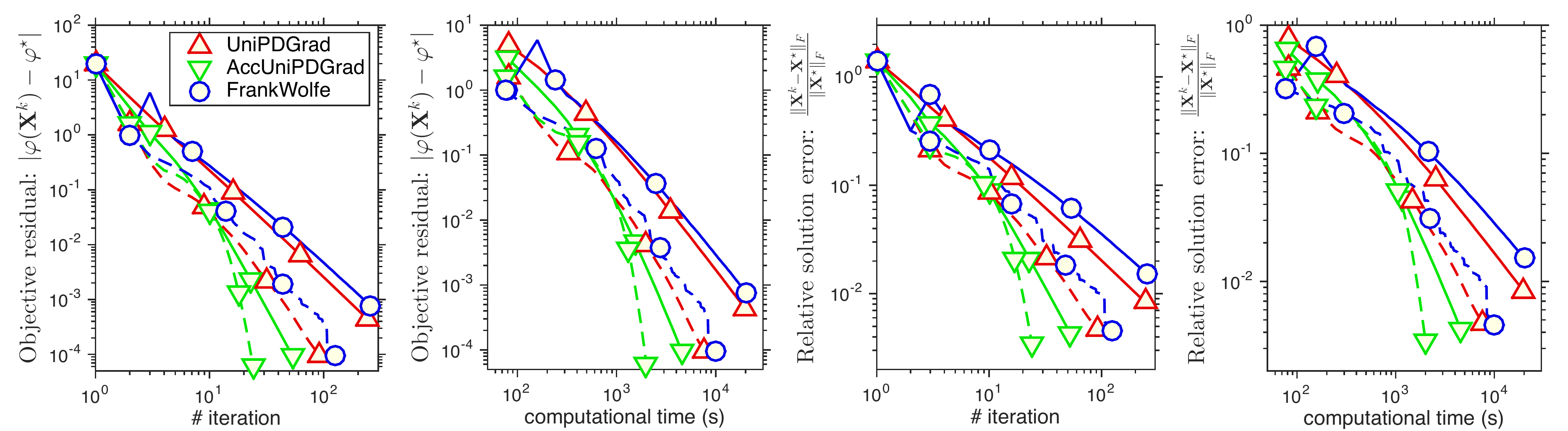} 
 \vspace{-4.5ex}
\caption{The convergence behavior of algorithms for the $q=14$ qubits QT problem. The solid lines correspond to the theoretical weighting scheme, and the dashed lines correspond to the line-search (in the weighting step) variants.}\label{fig:qt14_exam}
 \vspace{-.5ex}
\end{center}
\end{figure*} 

To this end, we generate a random pure quantum state (e.g., rank-1  $\Xb^\natural$), and we take $n=2p\log p$ random Pauli measurements. For $q=14$ qubits system, this corresponds to a $268'435'456$ dimensional problem with $n=138'099$ measurements. We recast \eqref{eq:quantum_tomo} into \eqref{eq:constr_cvx} by introducing the slack variable $\rb = \Ac(\Xb) - \bb$. 
%Then, each iteration of our algorithms requires one operation on $\Ac$ and one adjoint operation $\Ac^{*}$. 
% Each component of the Pauli operators $\Ac$ has a special Kronecker product structure. 
%Our C-implementation of the Pauli operators (MEX-ed for MATLAB) specifically exploits this structure. 

We compare our algorithms vs.\ the Frank-Wolfe method, which has optimal convergence rate guarantees for this problem, and its line-search variant. Computing the \textit{sharp}-operator $[\xb]^{\sharp}$ requires a top-eigenvector $\mathbf{e}_1$ of $\Ac^{*}(\lbd)$, while evaluating $g$ corresponds to just computing the top-eigenvalue $\sigma_1$ of  $\Ac^{*}(\lbd)$ via a power method. All methods use the same power method subroutine, which is implemented in MATLAB's \texttt{eigs} function. We set $\epsilon = 2\times 10^{-4}$ for our methods and have a wall-time $2\times 10^{4}$s in order to stop the algorithms. However, our algorithms seems insensitive to the choice of $\epsilon$ for the QT problem. %To keep track of the primal iterates, we use the efficient rank-1 update method of \cite{Brand2006}. 

Figure \ref{fig:qt14_exam} illustrates the iteration and the timing complexities of the algorithms. 
UniPDGrad algorithm, with an average of $1.978$ line-search steps per iteration, has similar iteration and timing performance as compared to the standard Frank-Wolfe scheme with step-size $\gamma_k = 2/(k+2)$. 
%Both algorithms empirically exhibit a $\mathcal{O}(1/\epsilon)$ convergence behavior in the objective. 
The line-search variant of Frank-Wolfe improves over the standard one; however, our accelerated variant, with an average of $1.057$ line-search steps, is the clear winner in terms of both iterations and time. %, and qualitatively shows a $\mathcal{O}(1/\sqrt{\epsilon})$ convergence behavior in the objective.
We can empirically improve the performance of our algorithms even further by adapting a similar line-search strategy in the weighting step as Frank-Wolfe, i.e., by choosing the weights $w_k$ in a greedy fashion to minimize the objective function. 
The practical improvements due to line-search appear quite significant.  

\vspace{-.5ex}
\subsection{Matrix completion with MovieLens dataset}
\vspace{-.5ex}
%To demonstrate the flexibility of our framework, we consider the popular MC application. In MC, we seek to estimate a low-rank matrix $\Xb \in \R^{p\times l}$ from its subsampled entries $\Pc_{\Omega}(\Xb_{\Omega})$, where $\Omega\subset N^2_{pl} := \set{1,\cdots, p}\times \set{1,\cdots, l}$ is a subset of indices, and $\Pc_{\Omega}$ is the projection operator, i.e., $(\Pc_{\Omega}(\Xb))_{ij} = \Xb_{ij}$ for $(i,j)\in\Omega$ and $(\Pc_{\Omega}(\Xb))_{ij} = 0$ otherwise. 
To demonstrate the flexibility of our framework, we consider the popular matrix completion (MC) application. In MC, we seek to estimate a low-rank matrix $\Xb \in \R^{p\times l}$ from its subsampled entries $\bb \in \R^n$, where $\Ac(\cdot)$ is the sampling operator, i.e., $\Ac(\Xb) = \bb$. 

\begin{figure*}[!ht]
 \begin{center}
  \vspace{-1ex}
\includegraphics[width=1.0\textwidth]{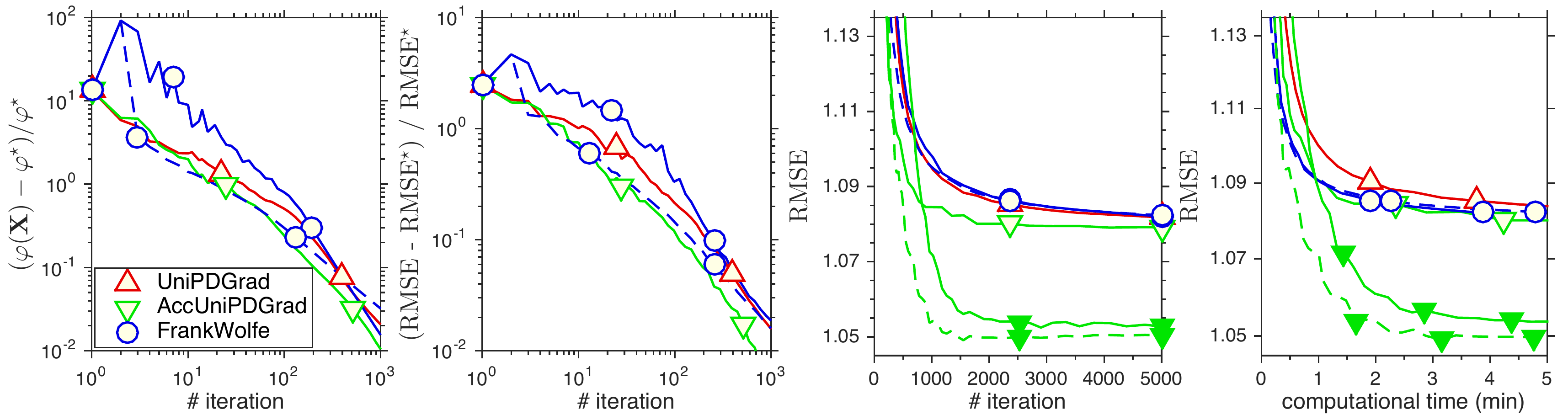}%ml_100k_3plots} 
 \vspace{-4.5ex}
\caption{The performance of the algorithms for the MC problems. The dashed lines correspond to the line-search (in the weighting step) variants, and the empty and the filled markers correspond to the formulation \eqref{eq:MC-LS} and \eqref{eq:matrix_comp_robust}, respectively. }
\vspace{-2ex}
\label{fig:ml_100k_exam1}
\end{center}
\end{figure*} 

Convex formulations involving the nuclear norm have been shown to be quite effective in estimating low-rank matrices from  limited number of measurements \cite{Candes2012b}. For instance, we can solve
\begin{equation}\label{eq:MC-LS}
 \min_{\Xb\in\R^{p\times l}}\!\!\set{\varphi(\Xb) \!=\! \frac{1}{n}\norm{\Ac(\Xb) - \bb}^2 : \norm{\Xb}_{*}\le \kappa }, 
\end{equation}
with Frank-Wolfe-type methods, where $\kappa$ is a tuning parameter, which may not be available a priori. 
We can also solve the following parameter-free version
%\begin{equation}\label{eq:MC-L1}
%\min_{\Xb\in\R^{p\times l}}\!\!\set{\varphi(\Xb) \!:=\! \frac{1}{2}\norm{\Pc_{\Omega}(\Xb) - \mathbf{M}_{\Omega}}_1^2 : \norm{\Xb}_{*}\le \kappa }, 
%\end{equation}
%where the errors are measured in the $\ell_1$-norm. 
%While the nonsmooth objective \eqref{eq:MC-L1} provides robustness against outliers, it clearly burdens the computational efficiency of the convex optimization algorithms. 
%Both formulations require the tuning parameter $\kappa$, which may not be available a priori. 
%However, the following formulation 
%which may be easier to tune with an expected perturbation level $\tau$:
\begin{equation}\label{eq:matrix_comp_robust}
\min_{\Xb\in\R^{p\times l}}\set{ \psi(\Xb) = \frac{1}{n}\norm{\Xb}_{*}^2 :  \Ac(\Xb) = \bb}.
\end{equation}
While the nonsmooth objective of \eqref{eq:matrix_comp_robust} prevents the tuning parameter, it clearly burdens the computational efficiency of the convex optimization algorithms. 

We apply our algorithms to \eqref{eq:MC-LS} and \eqref{eq:matrix_comp_robust} using the MovieLens 100K dataset. 
Frank-Wolfe algorithms cannot handle \eqref{eq:matrix_comp_robust} and only solve \eqref{eq:MC-LS}. 
For this experiment, we did not pre-process the data and took the default \texttt{ub} test and training data partition. 
We start out algorithms form $\lbd_0 = \mathbf{0}^n$, we set the target accuracy $\epsilon = 10^{-3}$, and we choose the tuning parameter $\kappa= 9975/2$ as in \cite{Jaggi2010}.
%We perform the experiments in MATLAB using a computational resource with 4 CPUs of 2.40 GHz and 16 GB memory space. 
We use \texttt{lansvd} function (MATLAB version) from PROPACK \cite{PROPACK} to compute the top singular vectors, and a simple implementation of the power method to find the top singular value in the line-search, both with $10^{-5}$ relative error tolerance. 

The first two plots in Figure \ref{fig:ml_100k_exam1} show the performance of the algorithms for \eqref{eq:MC-LS}. 
Our metrics are the normalized objective residual and the root mean squared error (RMSE) calculated for the test data. 
Since we do not have access to the optimal solutions, we approximated the optimal values, $\varphi^\star$ and RMSE$^\star$, by $5000$ iterations of AccUniPDGrad. 
Other two plots in Figure \ref{fig:ml_100k_exam1} compare the performance of the formulations \eqref{eq:MC-LS} and \eqref{eq:matrix_comp_robust} which are represented by the empty and the filled markers, respectively. 
Note that, the dashed line for AccUniPDGrad corresponds to the line-search variant, where the weights $w_k$ are chosen to minimize the feasibility gap. 
Additional details about the numerical experiments can be found in the supplementary material.

\vspace{-.9ex}
\section{Conclusions}\label{sec:conclusion}
\vspace{-.9ex}

This paper proposes a new primal-dual algorithmic framework that combines the flexibility of proximal primal-dual methods in addressing the general template \eqref{eq:constr_cvx}  while leveraging the computational advantages of the GCG-type methods.  The algorithmic instances of our framework are \emph{universal} since they can automatically adapt to the unknown H\"{o}lder continuity properties implied by the template. Our analysis technique unifies Nesterov's universal gradient methods and GCG-type methods to address the more broadly applicable primal-dual setting. The hallmarks of our approach includes the optimal worst-case complexity and its flexibility to handle nonsmooth objectives and complex constraints, compared to existing primal-dual algorithm as well as GCG-type algorithms, while essentially preserving their low cost iteration complexity.

\vspace{-0.5ex}
\subsubsection*{Acknowledgments}

This work was supported in part by ERC Future Proof, SNF 200021-146750 and SNF CRSII2-147633. 
We would like to thank Dr. Stephen Becker of University of Colorado at Boulder for his support in preparing the numerical experiments. 

\begingroup
\renewcommand{\section}[2]{} % This prevents \bibliography to put title References automatically. I will use \subsubsection* as indicated in the style formatting. Automatic script uses \section*
\subsubsection*{References}
{%\small
\bibliographystyle{unsrt}
%!TEX root = PDUGA_MAIN_arxiv.tex
\urlstyle{same}

}
%\bibliographystyle{plain}
%\bibliography{tran_bibtex_new}}

\begin{thebibliography}{10}

\bibitem{Jaggi2013}
M.~Jaggi,
\newblock {\em {R}evisiting {F}rank-{W}olfe: {P}rojection-free sparse convex optimization.}
%\newblock {JMLR W\&CP}, 28(1):427--435, 2013.
\newblock J. Mach. Learn. Res. Workshop \& Conf. Proc., vol. 28, pp. 427--435, 2013.

\bibitem{Cevher2014}
V.~Cevher, S.~Becker, and M.~Schmidt.
\newblock {\em Convex optimization for big data: Scalable, randomized, and parallel algorithms for big data analytics.}
%\newblock {IEEE Signal Process. Mag.}, 31(5):32--43, 2014.
\newblock {IEEE Signal Process. Mag.,}~vol.~31,~pp.~32--43,~Sept.~2014.

\bibitem{Wainwright2014}
M.~J. Wainwright,
\newblock {\em Structured regularizers for high-dimensional problems: {S}tatistical
  and computational issues.}
%\newblock {Annual Review of Statistics and its Applications}, 1:233--253, 2014.
%\newblock {Annu. Review of Stat. and its Applicat.},~vol.~1,~pp.~233--253,~Jan.~2014.
\newblock {Annu. Review Stat. and Applicat.},~vol.~1,~pp.~233--253,~Jan.~2014.

\bibitem{Lan2015}
G.~Lan and R.~D.~C. Monteiro,
\newblock {\em Iteration-complexity of first-order augmented {L}agrangian methods for convex programming.}
\newblock {Math. Program.}, pp.~1--37, Jan. 2015, 
{doi:10.1007/s10107-015-0861-x}.

\bibitem{Boyd2011}
S.~Boyd, N.~Parikh, E.~Chu, B.~Peleato, and J.~Eckstein,
\newblock {\em Distributed optimization and statistical learning via the alternating direction method of multipliers.}
%\newblock {Foundations and Trends in Machine Learning}, 3(1):1--122, 2011.
\newblock {Found. and Trends in Machine Learning},~vol.~3,~pp.~1--122,~Jan.~2011.

\bibitem{Combettes2008}
P.~L. Combettes and J.-C. Pesquet,
\newblock {\em A proximal decomposition method for solving convex variational inverse problems.}
\newblock {Inverse Problems}, vol. 24, Nov. 2008, {doi:10.1088/0266-5611/24/6/065014}.


\bibitem{Goldstein2013}
T.~Goldstein, E.~Esser, and R.~Baraniuk,
\newblock {\em Adaptive primal-dual hybrid gradient methods for saddle point problems.}
\newblock 2013,~\url{http://arxiv.org/pdf/1305.0546}.

\bibitem{Shefi2014}
R.~Shefi and M.~Teboulle,
\newblock {\em Rate of convergence analysis of decomposition methods based on the proximal method of multipliers for convex minimization.}
%\newblock {SIAM J. Optim.}, 24(1):269--297, 2014.
\newblock {SIAM J. Optim.},~vol.~24,~pp.~269--297,~Feb.~2014.

\bibitem{TranDinh2014b}
Q.~Tran-Dinh and V.~Cevher,
\newblock {\em Constrained convex minimization via model-based excessive gap.}
%\newblock In {Proc. Neural Inform. Process. Syst. Found. Conf. (NIPS2014)}, Montreal, Canada, 2014.
\newblock In {Advances Neural Inform. Process. Syst. 27 (NIPS2014)}, Montreal, Canada, 2014.

\bibitem{Beck2009}
A.~Beck and M.~Teboulle,
\newblock {\em A fast iterative shrinkage-thresholding algorithm for linear inverse problems.}
\newblock {SIAM J. Imaging Sci.},~vol.~2,~pp.~183--202,~Mar.~2009.

\bibitem{Nesterov2005c}
Y.~Nesterov,
\newblock {\em Smooth minimization of non-smooth functions.}
\newblock {Math. Program.},~vol.~103,~pp.~127--152,~May~2005.


\bibitem{Juditsky2013a}
A.~Juditsky and A.~Nemirovski,
\newblock {\em Solving variational inequalities with monotone operators on domains given by {L}inear {M}inimization {O}racles.}
%\newblock {Tech. Report.}, http://arxiv.org/abs/1312.1073, 2013.
\newblock {Math. Program.}, pp.~1--36, Mar. 2015, 
{doi:10.1007/s10107-015-0876-3}.

\bibitem{Yu2014}
Y.~Yu,
\newblock {\em Fast gradient algorithms for structured sparsity}.
\newblock PhD dissertation, Univ. Alberta, Edmonton, Canada, 2014.

\bibitem{Nesterov2015}
Y.~Nesterov,
\newblock {\em Complexity bounds for primal-dual methods minimizing the model of objective function.}
\newblock CORE, Univ. Catholique Louvain, Belgium, Tech. Rep., 2015. 
%\newblock Technical report, Universit{\'e} catholique de Louvain, Center for
%  Operations Research and Econometrics (CORE), 2015.

\bibitem{Nesterov2014}
Y.~Nesterov,
\newblock {\em Universal gradient methods for convex optimization problems.}
\newblock {Math. Program.}, vol. 152, pp. 381--404, Aug. 2015.

\bibitem{Nemirovskii1983}
A.~Nemirovskii and D.~Yudin,
\newblock {\em {P}roblem complexity and method efficiency in optimization}.
\newblock Hoboken, NJ: Wiley Interscience, 1983.

\bibitem{Rockafellar1970}
R.~T. Rockafellar,
\newblock {\em Convex analysis} (Princeton Math. Series), 
%volume~28 of {\em Princeton Mathematics Series}.
\newblock Princeton, NJ: Princeton Univ. Press, 1970.

\bibitem{Gross2010}
D.~Gross, Y.-K. Liu, S.~T. Flammia, S.~Becker, and J.~Eisert,
\newblock {\em Quantum state tomography via compressed sensing.}
\newblock {Phys. Rev. Lett.}, vol. 105, pp. Oct. 2010, 
{doi:10.1103/PhysRevLett.105.150401}.

\bibitem{Candes2012b}
E.~Cand\`{e}s and B.~Recht,
\newblock {\em Exact matrix completion via convex optimization.}
\newblock {Commun. ACM}, vol. 55, pp. 111--119, June 2012.

\bibitem{Jaggi2010}
M.~Jaggi and M.~Sulovsk{\'y},
\newblock {\em A simple algorithm for nuclear norm regularized problems.}
\newblock In {Proc. 27th Int. Conf. Machine Learning (ICML2010)}, Haifa, Israel, 2010, pp. 471--478.

\bibitem{PROPACK}
R.~M. Larsen,
\newblock {\em {PROPACK} - {S}oftware for large and sparse {SVD} calculations.}
\newblock Available: \url{http://sun.stanford.edu/~rmunk/PROPACK/}.

\end{thebibliography}

\begin{thebibliography}{10}

\bibitem{Nesterov2014supp}
Y.~Nesterov,
\newblock {\em Universal gradient methods for convex optimization problems.}
\newblock {Math. Program.}, vol. 152, pp. 381--404, Aug. 2015.

\bibitem{Beck2009supp}
A.~Beck and M.~Teboulle,
\newblock {\em A fast iterative shrinkage-thresholding algorithm for linear inverse problems.}
\newblock {SIAM J. Imaging Sci.},~vol.~2,~pp.~183--202,~Mar.~2009.

\bibitem{Nemirovskii1983supp}
A.~Nemirovskii and D.~Yudin,
\newblock {\em {P}roblem complexity and method efficiency in optimization}.
\newblock Hoboken, NJ: Wiley Interscience, 1983.


\end{thebibliography}
%\input{PDUGA_refs}
\endgroup

%!TEX root = PDUGA_MAIN_arxiv.tex

%%%%%%%%%%%%%%%%%%%%%%%%%%%%%%%%%%%%%%%%%%%%%%
\cleardoublepage
%\twocolumn
\onecolumn
\centerline{\textbf{\large Supplementary document:}}
\vskip0.1in
\centerline{\textbf{\Large A Universal Primal-Dual Convex Optimization Framework}}
%\centerline{\textbf{\Large  for Sparse/Low-rank Optimization}}
\vskip0.3in
\appendix
\setcounter{page}{1}

In this supplementary document, we provide the technical proofs and additional implementation details, and it is organized as follows: 
Section \ref{sec:key_est_app} defines the key estimates, that forms the basis of the universal gradient algorithms. 
Sections \ref{sec:conv_app} and \ref{sec:conv_app_acc} present the proofs of  Theorems \ref{th:primal_recovery} and \ref{th:primal_recovery2} respectively.
Finally, Section \ref{sec:imp_det_app} provides the implementation details of the quantum tomography and the matrix completion problems considered in Section \ref{sec:num_experiments}.

%% A.1. The key estimate of the proximal-gradient step. 
\section{The key estimate of the proximal-gradient step} \label{sec:key_est_app}
% If the function $g$ defined by \eqref{eq:dual_components} satisfies \eqref{eq:holder_continuity} with $M_{\nu} <+\infty$ as in Assumption $\mathrm{A}.\ref{as:A1}$, then the following lemma provides key properties for constructing universal gradient algorithms.

Lemma 2 in \cite{Nesterov2014supp}, which we present below as Lemma \ref{le:key_properties}, provides key properties for constructing universal gradient algorithms. 
We refer to \cite{Nesterov2014supp} for the proof of this lemma.
%The following lemma provides key properties for constructing universal gradient algorithms.

% Lemma 1.
\begin{lemma}\label{le:key_properties}
Let function $g$ satisfy the Assumption $\mathrm{A}.\ref{as:A2}$. Then for any $\delta > 0$ and
%Under Assumption $\mathrm{A}.\ref{as:A2}$, the following statement holds for any $\tilde{\lbd}, \lbd\in\R^n:$
%For any $\tilde{\lbd}, \lbd\in\R^n$, we have:
%\begin{equation*}%\label{eq:bound_g}
%g(\tilde{\lbd}) \leq g(\lbd) + \iprods{\nabla{g}(\lbd), \tilde{\lbd} - \lbd} + \frac{M_{\nu}}{1 + \nu}\norm{\tilde{\lbd} - \lbd}^{1+\nu}.
%\end{equation*}
%In addition, for any $\delta > 0$, if we choose:
\begin{equation*}%\label{eq:M_const}
M \geq \left[\frac{1-\nu}{1 + \nu}\frac{1}{\delta}\right]^{\frac{1-\nu}{1+\nu}}M_{\nu}^{\frac{2}{1 + \nu}},
\end{equation*}
the following statement holds for any $\tilde{\lbd}, \lbd\in\R^n:$
\begin{equation*}%\label{eq:bound_g2}
g(\tilde{\lbd}) \leq \underbrace{g(\lbd) + \iprods{\nabla{g}(\lbd), \tilde{\lbd} - \lbd} + \frac{M}{2}\norm{\tilde{\lbd} - \lbd}^2 }_{Q_M(\tilde{\lbd},\lbd)}+ \frac{\delta}{2}.
\end{equation*}
\end{lemma}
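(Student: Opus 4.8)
The statement to prove is: if $g$ satisfies Assumption A.2 (so $M_\nu(g) < \infty$ for some $\nu \in [0,1]$), then for any $\delta > 0$ and any $M \geq \big[\frac{1-\nu}{1+\nu}\frac{1}{\delta}\big]^{\frac{1-\nu}{1+\nu}} M_\nu^{\frac{2}{1+\nu}}$, we have $g(\tilde\lbd) \leq Q_M(\tilde\lbd,\lbd) + \frac{\delta}{2}$ for all $\lbd, \tilde\lbd \in \R^n$. This is exactly Nesterov's inequality relating H\"older continuity of the gradient to an approximate descent lemma, so the proof is essentially a one-dimensional calculus estimate that I would carry out as follows.

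\textbf{Step 1: Reduce to an integral bound on the residual.} Using convexity and the fundamental theorem of calculus along the segment $\lbd_t = \lbd + t(\tilde\lbd - \lbd)$, write
\begin{equation*}
g(\tilde\lbd) - g(\lbd) - \iprods{\nabla g(\lbd), \tilde\lbd - \lbd} = \int_0^1 \iprods{\nabla g(\lbd_t) - \nabla g(\lbd), \tilde\lbd - \lbd}\, dt.
\end{equation*}
By Cauchy--Schwarz and the H\"older continuity defining $M_\nu$ in \eqref{eq:holder_continuity}, $\|\nabla g(\lbd_t) - \nabla g(\lbd)\| \leq M_\nu \|\lbd_t - \lbd\|^\nu = M_\nu t^\nu \|\tilde\lbd - \lbd\|^\nu$, so the right-hand side is at most $M_\nu \|\tilde\lbd - \lbd\|^{1+\nu}\int_0^1 t^\nu\, dt = \frac{M_\nu}{1+\nu}\|\tilde\lbd - \lbd\|^{1+\nu}$.

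\textbf{Step 2: Dominate the power $\|\tilde\lbd-\lbd\|^{1+\nu}$ by a quadratic plus constant (Young's inequality).} It remains to show $\frac{M_\nu}{1+\nu} r^{1+\nu} \leq \frac{M}{2} r^2 + \frac{\delta}{2}$ for all $r = \|\tilde\lbd - \lbd\| \geq 0$, under the stated lower bound on $M$. This is where Young's inequality $ab \leq \frac{a^s}{s} + \frac{b^{s'}}{s'}$ enters: apply it with exponents $s = \frac{2}{1+\nu}$ and $s' = \frac{2}{1-\nu}$ (conjugate, since $\frac{1+\nu}{2} + \frac{1-\nu}{2} = 1$) to the product $r^{1+\nu} = \big(r^2\big)^{\frac{1+\nu}{2}}$, splitting off a free scaling parameter to be optimized; one gets $r^{1+\nu} \leq \frac{1+\nu}{2}\,\tau\, r^2 + \frac{1-\nu}{2}\,\tau^{-\frac{1+\nu}{1-\nu}}$ for any $\tau > 0$. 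Choosing $\tau$ so that $\frac{M_\nu}{1+\nu}\cdot\frac{1+\nu}{2}\tau = \frac{M}{2}$, i.e. $\tau = M/M_\nu$, reduces the claim to checking that the constant term $\frac{M_\nu}{1+\nu}\cdot\frac{1-\nu}{2}(M/M_\nu)^{-\frac{1+\nu}{1-\nu}}$ does not exceed $\frac{\delta}{2}$, which is a direct algebraic rearrangement of the hypothesis $M \geq \big[\frac{1-\nu}{1+\nu}\frac{1}{\delta}\big]^{\frac{1-\nu}{1+\nu}} M_\nu^{\frac{2}{1+\nu}}$. The edge cases $\nu = 1$ (inequality becomes the standard descent lemma, constant term vanishes) and $\nu = 0$ (bounded subgradient, $r^{1}$ bounded trivially) should be noted separately since the exponents degenerate.

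\textbf{Main obstacle.} There is no deep obstacle here — the result is quoted from \cite{Nesterov2014supp} and the paper explicitly defers to that reference. The only mildly delicate point is bookkeeping in Step 2: getting the conjugate exponents and the optimal $\tau$ right so that the threshold on $M$ comes out exactly as $\big[\frac{1-\nu}{1+\nu}\frac{1}{\delta}\big]^{\frac{1-\nu}{1+\nu}} M_\nu^{\frac{2}{1+\nu}}$ rather than off by a constant factor, and handling the degenerate exponents at $\nu \in \{0,1\}$. Since the paper itself does not reproduce the proof (it writes ``We refer to \cite{Nesterov2014supp} for the proof of this lemma''), I would either cite it likewise or include the short computation above for completeness.
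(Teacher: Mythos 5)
Your proposal is correct and follows essentially the same route as the source the paper relies on: the paper gives no proof of its own (it defers to Nesterov's universal gradient methods paper), and your argument --- the H\"older descent bound $g(\tilde{\lbd}) \leq g(\lbd) + \iprods{\nabla g(\lbd),\tilde{\lbd}-\lbd} + \tfrac{M_\nu}{1+\nu}\norm{\tilde{\lbd}-\lbd}^{1+\nu}$ followed by the scalar estimate $\tfrac{M_\nu}{1+\nu}r^{1+\nu} \leq \tfrac{M}{2}r^2 + \tfrac{\delta}{2}$ via Young's inequality with the optimized parameter $\tau = M/M_\nu$ --- is exactly that proof, and your algebra reproduces the stated threshold on $M$ without any off-by-constant error. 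The degenerate cases $\nu\in\{0,1\}$ are handled as you indicate, so nothing is missing.
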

This lemma provides an approximate quadratic upper bound for $g$. 
However, it depends on the choice of the inexactness parameter $\delta$ and the smoothness parameter $\nu$. 
If $\nu = 1$, then $M$ can be set to the Lipschitz constant $M_1$, and it becomes independent of $\delta$.

%%The algorithms developed in this paper are based on the proximal-gradient step \eqref{eq:cvx_subprob} on the dual objective function $G$:
%%\begin{equation*}
%%\lbd_{k+1} := \prox_{M_k^{-1}h}\left(\hat{\lbd}_k - M_k^{-1}\nabla{g}(\hat{\lbd}_k)\right) = \arg\min_{\lbd\in\R^n}\set{Q_{M_k}(\lbd;\hat{\lbd}_k) + h(\lbd)},
%%\end{equation*}
%%where $Q_{M_k}$ is the quadratic surrogate of $g$ defined by:
%%\begin{equation}\label{eq:Q_M}
%%Q_{M_k}(\lbd; \hat{\lbd}_k) := g(\hat{\lbd}_k) + \iprods{\nabla{g}(\hat{\lbd}_k), \lbd - \hat{\lbd}_k} + \frac{M_k}{2}\norm{\lbd - \hat{\lbd}_k}^2.
%%\end{equation}
%%This step guarantees the following estimate:
The algorithms that we develop in this paper are based on the proximal-gradient step \eqref{eq:cvx_subprob} on the dual objective function $G$.
This update rule guarantees the following estimate:

% Lemma 2.
\begin{lemma}\label{le:key_bound}
Let $Q_M$ be the quadratic model of $g$. 
If $\lbd_{k+1}$, which is defined by \eqref{eq:cvx_subprob}, satisfies
\begin{equation}\label{eq:linesearch_cond}
g(\lbd_{k+1}) \leq Q_{M_k}(\lbd_{k+1}; \hat{\lbd}_k)  + \frac{\delta_k}{2}
\end{equation}
for some $\delta_k \in \R$, then the following inequality holds for any $\lbd\in\R^n:$
\begin{equation*}\label{eq:key_estimate}
G(\lbd_{k+1}) \leq g(\hat{\lbd}_k) + \iprods{ \nabla{g}(\hat{\lbd}_k), \lbd - \hat{\lbd}_k } + h(\lbd) + \frac{\delta_k}{2}  + \frac{M_k}{2}\left[ \norm{\lbd - \hat{\lbd}_k}^2 - \norm{\lbd - \lbd_{k+1}}^2 \right].
\end{equation*}
\end{lemma}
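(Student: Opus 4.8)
The plan is to combine three facts: (i) the line-search condition \eqref{eq:linesearch_cond}, which controls $g(\lbd_{k+1})$ in terms of the quadratic surrogate $Q_{M_k}(\cdot\,;\hat\lbd_k)$; (ii) the optimality condition for the proximal step \eqref{eq:cvx_subprob} that defines $\lbd_{k+1}$; and (iii) convexity of $h$. The target inequality is essentially the standard ``descent lemma + prox optimality'' estimate from the FISTA analysis of \cite{Beck2009}, but with the exact descent inequality replaced by the inexact one \eqref{eq:linesearch_cond} (carrying the additive slack $\delta_k/2$).

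First I would write, using \eqref{eq:linesearch_cond} and the definition of $Q_{M_k}$,
\[
G(\lbd_{k+1}) = g(\lbd_{k+1}) + h(\lbd_{k+1}) \leq g(\hat\lbd_k) + \iprods{\nabla g(\hat\lbd_k), \lbd_{k+1} - \hat\lbd_k} + \frac{M_k}{2}\norm{\lbd_{k+1} - \hat\lbd_k}^2 + h(\lbd_{k+1}) + \frac{\delta_k}{2}.
\]
Next, since $\lbd_{k+1}$ minimizes $\lbd \mapsto Q_{M_k}(\lbd;\hat\lbd_k) + h(\lbd) = \iprods{\nabla g(\hat\lbd_k), \lbd} + \frac{M_k}{2}\norm{\lbd - \hat\lbd_k}^2 + h(\lbd)$ (up to constants independent of $\lbd$), the first-order optimality condition gives a subgradient $\xi \in \partial h(\lbd_{k+1})$ with $\nabla g(\hat\lbd_k) + M_k(\lbd_{k+1} - \hat\lbd_k) + \xi = 0$. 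By convexity of $h$, for any $\lbd \in \R^n$ we have $h(\lbd_{k+1}) \leq h(\lbd) + \iprods{\xi, \lbd_{k+1} - \lbd} = h(\lbd) - \iprods{\nabla g(\hat\lbd_k) + M_k(\lbd_{k+1}-\hat\lbd_k), \lbd_{k+1} - \lbd}$. Substituting this bound on $h(\lbd_{k+1})$ into the previous display yields
\[
G(\lbd_{k+1}) \leq g(\hat\lbd_k) + \iprods{\nabla g(\hat\lbd_k), \lbd - \hat\lbd_k} + h(\lbd) + \frac{\delta_k}{2} + \frac{M_k}{2}\norm{\lbd_{k+1} - \hat\lbd_k}^2 - M_k\iprods{\lbd_{k+1} - \hat\lbd_k, \lbd_{k+1} - \lbd},
\]
after grouping the inner products involving $\nabla g(\hat\lbd_k)$.

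The remaining step is the purely algebraic identity $\frac{M_k}{2}\norm{\lbd_{k+1}-\hat\lbd_k}^2 - M_k\iprods{\lbd_{k+1}-\hat\lbd_k, \lbd_{k+1}-\lbd} = \frac{M_k}{2}\big[\norm{\lbd - \hat\lbd_k}^2 - \norm{\lbd - \lbd_{k+1}}^2\big]$, which follows by expanding all squares (the cross terms in $\hat\lbd_k$ and $\lbd$ cancel). Plugging this in gives exactly the claimed estimate. I do not anticipate a genuine obstacle here; the only point requiring mild care is the handling of the prox optimality condition when $h$ is an indicator function (so $\lbd_{k+1}$ may lie on the boundary of $\dom h$) — but in that case $\partial h(\lbd_{k+1})$ is the normal cone and the subgradient inequality $h(\lbd_{k+1}) \leq h(\lbd) + \iprods{\xi, \lbd_{k+1}-\lbd}$ still holds for all $\lbd$ (trivially when $h(\lbd) = +\infty$), so the argument goes through verbatim. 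This is the same structure as Lemma 2.3 in \cite{Beck2009}, extended to carry the inexactness term $\delta_k/2$.
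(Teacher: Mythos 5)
Your proposal is correct and follows essentially the same route as the paper's proof: both rest on the prox optimality condition $\nabla g(\hat\lbd_k) + M_k(\lbd_{k+1}-\hat\lbd_k) + \xi = 0$ with $\xi \in \partial h(\lbd_{k+1})$, the convexity (subgradient) inequality for $h$, the line-search condition \eqref{eq:linesearch_cond}, and the same three-point algebraic identity; the paper merely organizes the computation around the quantity $\tfrac{1}{2}\norm{\lbd - \lbd_{k+1}}^2 - \tfrac{1}{2}\norm{\lbd - \hat{\lbd}_k}^2$ rather than bounding $G(\lbd_{k+1})$ directly, which is the same chain of estimates read in the other order.
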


% A.1.  Lemma 3.2.: Key bounds.
\begin{proof}[Proof of Lemma \ref{le:key_bound}]
We note that the optimality condition of \eqref{eq:cvx_subprob} is
%%%\begin{equation*}%\label{eq:opt_cond}
%%%0 \in \nabla{g}(\hat{\lbd}_k) + M_k(\lbd_{k+1} - \hat{\lbd}_k) + \partial{h}(\lbd_{k+1}), \notag
%%%\end{equation*}
%%%which can be written as $\hat{\lbd}_k - \lbd_{k+1} \in M_k^{-1}(\nabla{g}_k(\hat{\lbd}_k) + \partial{h}(\lbd_{k+1}))$.
%%%Let $\nabla{h}(\hat{\lbd}_{k+1}) \in \partial{h}(\lbd_{k+1})$ be a subgradient of $h$ at $\lbd_{k+1}$. Then, we have:
%%%\begin{equation}\label{eq:lm1_est1}
%%%\hat{\lbd}_k - \lbd_{k+1} = \frac{1}{M_k}\left[\nabla{g}(\hat{\lbd}_k) + \nabla{h}(\lbd_{k+1})\right].
%%%\end{equation}
%%%Now, using \eqref{eq:lm1_est1}, we can derive:
%%%\begin{align}\label{eq:lm1_est2}
%%%\Delta{r}_k(\lbd) &= \frac{1}{2}\norm{\lbd - \lbd_{k+1}}^2 - \frac{1}{2}\norm{\lbd - \hat{\lbd}_k}^2\nonumber\\
%%%&=   \iprods{\hat{\lbd}_k - \lbd_{k+1}, \lbd - \lbd_{k+1}} - \frac{1}{2}\norm{\hat{\lbd}_k - \lbd_{k+1}}^2 \nonumber\\
%%%&    \overset{\mathclap{\eqref{eq:lm1_est1}}}{=} \frac{1}{M_k}\iprods{\nabla{g}(\hat{\lbd}_k) + \nabla{h}(\lbd_{k+1}), \lbd - \lbd_{k+1}}  - \frac{1}{2}\norm{\hat{\lbd}_k - \lbd_{k+1}}^2 \nonumber\\
%%%&= - \frac{1}{M_k}\left[\iprods{\nabla{g}(\hat{\lbd}_k), \lbd_{k+1} - \hat{\lbd}_k} + \frac{M_k}{2}\norm{\lbd_{k+1} - \hat{\lbd}_k}^2\right]\nonumber\\
%%%&~~~~~~~~~ + \frac{1}{M_k}\iprods{\nabla{h}(\lbd_{k+1}), \lbd - \lbd_{k+1}}  +\frac{1}{M_k}\iprods{\nabla{g}(\hat{\lbd}_k), \lbd - \hat{\lbd}_{k}}.
%%%\end{align}
\begin{equation*}%\label{eq:opt_cond}
0 \in \nabla{g}(\hat{\lbd}_k) + M_k(\lbd_{k+1} - \hat{\lbd}_k) + \partial{h}(\lbd_{k+1}), \notag
\end{equation*}
which can be written as $\hat{\lbd}_k - \lbd_{k+1} \in M_k^{-1}(\nabla{g}_k(\hat{\lbd}_k) + \partial{h}(\lbd_{k+1}))$.
Let $\nabla{h}(\hat{\lbd}_{k+1}) \in \partial{h}(\lbd_{k+1})$ be a subgradient of $h$ at $\lbd_{k+1}$. Then, we have
\begin{equation}\label{eq:lm1_est1}
\hat{\lbd}_k - \lbd_{k+1} = \frac{1}{M_k}\left[\nabla{g}(\hat{\lbd}_k) + \nabla{h}(\lbd_{k+1})\right].
\end{equation}
Now, using \eqref{eq:lm1_est1}, we can derive
\begin{align}\label{eq:lm1_est2}
\Delta{r}_k(\lbd) &= \frac{1}{2}\norm{\lbd - \lbd_{k+1}}^2 - \frac{1}{2}\norm{\lbd - \hat{\lbd}_k}^2\nonumber\\
&=   \iprods{\hat{\lbd}_k - \lbd_{k+1}, \lbd - \lbd_{k+1}} - \frac{1}{2}\norm{\hat{\lbd}_k - \lbd_{k+1}}^2 \nonumber\\
&    \overset{\mathclap{\eqref{eq:lm1_est1}}}{=} \frac{1}{M_k}\iprods{\nabla{g}(\hat{\lbd}_k) + \nabla{h}(\lbd_{k+1}), \lbd - \lbd_{k+1}}  - \frac{1}{2}\norm{\hat{\lbd}_k - \lbd_{k+1}}^2 \nonumber\\
&= - \frac{1}{M_k}\left[\iprods{\nabla{g}(\hat{\lbd}_k), \lbd_{k+1} - \hat{\lbd}_k} + \frac{M_k}{2}\norm{\lbd_{k+1} - \hat{\lbd}_k}^2\right]\nonumber\\
&~~~~~~~~~ + \frac{1}{M_k}\iprods{\nabla{h}(\lbd_{k+1}), \lbd - \lbd_{k+1}}  +\frac{1}{M_k}\iprods{\nabla{g}(\hat{\lbd}_k), \lbd - \hat{\lbd}_{k}} \nonumber \\
&  \overset{\mathclap{\eqref{eq:linesearch_cond}}}{\leq} \frac{1}{M_k}\left[ g(\hat{\lbd}_k) - g(\lbd_{k+1})  + \frac{\delta_k}{2} \right] \nonumber \\
&~~~~~~~~~ + \frac{1}{M_k}\iprods{\nabla{h}(\lbd_{k+1}), \lbd - \lbd_{k+1}}  +\frac{1}{M_k}\iprods{\nabla{g}(\hat{\lbd}_k), \lbd - \hat{\lbd}_{k}} \nonumber \\
& \leq \frac{1}{M_k}\left[ g(\hat{\lbd}_k) - g(\lbd_{k+1})  + \frac{\delta_k}{2} \right] \nonumber \\
&~~~~~~~~~ + \frac{1}{M_k}\left[h(\lbd) - h(\lbd_{k+1})\right]   +\frac{1}{M_k}\iprods{\nabla{g}(\hat{\lbd}_k), \lbd - \hat{\lbd}_{k}} \nonumber \\
&= \frac{1}{M_k}\left[g(\hat{\lbd}_k) + \iprods{\nabla{g}(\hat{\lbd}_k), \lbd - \hat{\lbd}_k}  + h(\lbd) + \frac{\delta_k}{2}\right] - \frac{1}{M_k}G(\lbd_{k+1}) \nonumber
\end{align}
where the last inequality directly follows the convexity of $h$.
%%%Rearranging the condition \eqref{eq:linesearch_cond}, we get:
%%%%\begin{equation*}
%%%%g(\lbd_{k+1}) \leq g(\hat{\lbd}_k) + \iprods{\nabla{g}(\hat{\lbd}_k), \lbd_{k+1} - \hat{\lbd}_k} + \frac{M_k}{2}\norm{\lbd_{k+1} - \hat{\lbd}_k}^2 + \frac{\delta_k}{2},
%%%%\end{equation*}
%%%%which implies: 
%%%\begin{equation}\label{eq:lm1_est4}
%%%-\frac{1}{M_k}\left[  \iprods{\nabla{g}(\hat{\lbd}_k), \lbd_{k+1} - \hat{\lbd}_k} + \frac{M_k}{2}\norm{\lbd_{k+1} - \hat{\lbd}_k}^2  \right] \leq \frac{1}{M_k}\left[ g(\hat{\lbd}_k) - g(\lbd_{k+1})  + \frac{\delta_k}{2} \right].
%%%\end{equation}
%%%By the convexity of $h$, we have:
%%%\begin{equation}\label{eq:lm1_est3}
%%% \iprods{\nabla{h}(\lbd_{k+1}), \lbd - \lbd_{k+1}} \leq h(\lbd)  - h(\lbd_{k+1}).
%%%\end{equation}
%%%Substituting \eqref{eq:lm1_est4} and \eqref{eq:lm1_est3} into \eqref{eq:lm1_est2}, we obtain:
%%%\begin{align*}
%%%\Delta{r}_k(\lbd) &\leq  \frac{1}{M_k}\left[g(\hat{\lbd}_k) - g(\lbd_{k+1}) + \frac{\delta_k}{2}\right] + \frac{1}{M_k}\left[h(\lbd) - h(\lbd_{k+1})\right] +\frac{1}{M_k}\iprods{\nabla{g}(\hat{\lbd}_k), \lbd - \hat{\lbd}_{k}} \nonumber\\
%%%&=\frac{1}{M_k}\left[g(\hat{\lbd}_k) + \iprods{\nabla{g}(\hat{\lbd}_k), \lbd - \hat{\lbd}_k}  + h(\lbd) + \frac{\delta_k}{2}\right] - \frac{1}{M_k}\left[g(\lbd_{k+1}) + h(\lbd_{k+1})\right].
%%%\end{align*}
%%%It remains to use  $G(\cdot) = g(\cdot) + h(\cdot)$ and $\Delta{r}_k(\lbd) := \frac{1}{2}\norm{\lbd - \lbd_{k+1}}^2 - \frac{1}{2}\norm{\lbd - \hat{\lbd}_k}^2$ to obtain \eqref{eq:key_estimate}.
\end{proof}
% End of the proof.

Clearly, \eqref{eq:linesearch_cond} holds if  $M_k \geq \widebar{M}_{\epsilon}$, which is defined by \eqref{eq:M_upper}, due to Lemma \ref{le:key_properties}, whenever $\delta_k \! = \! \epsilon > 0$.%, and $\widebar{M}_{\epsilon}$ is defined by \eqref{eq:M_upper}.

If $\nu$ and $M_{\nu}$ are known, we can set $M_k = \widebar{M}_{\epsilon}$, % defined by \eqref{eq:M_upper} for all $k\geq 0$. 
then the condition \eqref{eq:linesearch_cond} is automatically satisfied.
However, we do not know $\nu$ and $M_{\nu}$ a priori in general. 
In this case, $M_k$ can be determined via a line-search procedure on the condition \eqref{eq:linesearch_cond}. 
%Due to the boundedness of $\widebar{M}_{\epsilon}$ in \eqref{eq:M_upper}, the line-search procedure is terminated after a finite number of function evaluations (see Lemma \ref{le:line_search_termination}).

% A.2. The finite termination of Line-search.
%\section{Finite termination of the line-search procedure}
% The proximal-gradient step \eqref{eq:cvx_subprob} requires to satisfy the line-search condition \eqref{eq:linesearch_cond}.
The following lemma guarantees that the line-search procedure in Algorithms \ref{alg:A1} and \ref{alg:A2} terminates after a finite number of line-search iterations.

\begin{lemma}\label{le:line_search_termination}
The line-search procedure in Algorithm \ref{alg:A1} terminates after at most
\begin{equation*}
i_k = \lfloor\log_2(\widebar{M}_{\epsilon}/{M}_{-1})\rfloor+1
\end{equation*}
number of iterations. 

Similarly, the line-search procedure in Algorithm \ref{alg:A2} terminates after at most
\begin{equation*}
i_k = \left\lfloor\log_2\left(\frac{k+1}{\epsilon}\right) + \log_2\left(\frac{M_{\nu}^{\frac{2}{1+\nu}}}{{M}_{-1}}\right)\right\rfloor + 1
\end{equation*}
number of iterations. 

\end{lemma}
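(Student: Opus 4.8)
\textbf{Proof plan for Lemma \ref{le:line_search_termination}.}

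The plan is to argue that the line-search condition \eqref{eq:linesearch_cond}, with $\delta_k$ chosen as in each algorithm, must hold as soon as $M_{k,i}$ exceeds the threshold appearing in Lemma \ref{le:key_properties}, and then to convert this upper bound on the final $M_{k,i}$ into a bound on the number of doublings $i_k$. For Algorithm \ref{alg:A1} the relevant accuracy parameter is $\delta_k = \epsilon$, so Lemma \ref{le:key_properties} guarantees that the condition $g(\lbd_{k,i}) \le Q_{M_{k,i}}(\lbd_{k,i};\lbd_k) + \epsilon/2$ is satisfied whenever $M_{k,i} \ge \widebar{M}_{\epsilon}$ (recall that $\widebar{M}_{\epsilon}$ in \eqref{eq:M_upper} is exactly the threshold from Lemma \ref{le:key_properties} with $\delta = \epsilon$, and note the infimum over $\nu$ only makes the true required value smaller). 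Since the line-search starts at $M_{k,0} = 0.5 M_{k-1}$ and doubles, after $i$ failed steps we have $M_{k,i} = 2^{i-1} M_{k-1}$; the procedure must therefore terminate once $2^{i-1} M_{k-1} \ge \widebar{M}_{\epsilon}$, i.e. once $i - 1 \ge \log_2(\widebar{M}_{\epsilon}/M_{k-1})$.

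The one subtlety to handle carefully here is that the bound must be stated in terms of the \emph{initial} estimate $M_{-1}$, not the running $M_{k-1}$. I would resolve this by a short monotonicity observation: because each successful line-search at iteration $k$ halves the starting value ($M_{k,0} = 0.5 M_{k-1}$) and then doubles until success, one checks inductively that $M_{k} \le \widebar{M}_{\epsilon}$ never forces $M_{k-1}$ above $\widebar{M}_{\epsilon}$ in a way that worsens the count — in fact $M_{k-1} \ge M_{-1}$ can fail, so the clean statement is that the \emph{worst case} number of doublings at any single iteration is bounded by $\lfloor \log_2(\widebar{M}_{\epsilon}/M_{-1}) \rfloor + 1$, using $M_{k-1} \ge$ (the smallest value $M$ ever attains) and the fact that the algorithm's halving step means $M_{k-1}$ can be as small as a value comparable to $M_{-1}$ only at the start; more simply, since $M_{-1} \le \widebar{M}_{\epsilon}$ and the starting point of each search is $0.5 M_{k-1} \ge 0.5 M_{-1}$ is \emph{not} guaranteed, so I would instead bound $i_k$ by noting that the number of doublings needed from $M_{k,0}$ to reach $\widebar{M}_{\epsilon}$ is $\lceil \log_2(\widebar{M}_{\epsilon}/M_{k,0}) \rceil$, and then use $M_{k,0} = 0.5 M_{k-1} \ge$ a uniform lower bound. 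The honest route is: once the condition holds, $M_{k,i}$ could have been stopped one step earlier, so $M_{k-1}$, which equals $M_{k-1,i_{k-1}}$, is itself at most $2\widebar{M}_{\epsilon}$ (it overshoots by at most a factor of $2$), giving $M_{k,0} \ge (1/4)\cdot(\text{previous successful value})$; chaining this shows $M_k$ stays in $[\,\cdot\,, 2\widebar{M}_{\epsilon}]$ and the per-iteration doubling count is $O(1)$ plus the one-time cost $\log_2(\widebar{M}_{\epsilon}/M_{-1})$ absorbed at the first iteration. I would present the clean bound $i_k = \lfloor \log_2(\widebar{M}_{\epsilon}/M_{-1}) \rfloor + 1$ as the (slightly loose but correct) worst-case bound, which is all that is needed for the oracle-complexity estimates \eqref{eq:N_G} and \eqref{eq:N_2}.

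For Algorithm \ref{alg:A2} the argument is identical in structure, but the per-iteration accuracy is $\delta_k = \epsilon/t_k$ rather than $\epsilon$, and the search starts at $M_{k,0} = M_{k-1}$ (no halving). Plugging $\delta = \epsilon/t_k$ into Lemma \ref{le:key_properties}, the sufficient threshold becomes $\big[\frac{1-\nu}{1+\nu}\frac{t_k}{\epsilon}\big]^{\frac{1-\nu}{1+\nu}} M_\nu^{\frac{2}{1+\nu}} \le \big(\frac{t_k}{\epsilon}\big)^{\frac{1-\nu}{1+\nu}} M_\nu^{\frac{2}{1+\nu}}$, and since $t_k \le k+1$ (a standard induction from $t_0 = 1$, $t_{k+1} = \frac12(1+\sqrt{1+4t_k^2})$, giving $t_k \le (k+2)/2 \le k+1$) this is at most $\big(\frac{k+1}{\epsilon}\big)^{\frac{1-\nu}{1+\nu}} M_\nu^{\frac{2}{1+\nu}} \le \frac{k+1}{\epsilon} M_\nu^{\frac{2}{1+\nu}}$ when $\nu \le 1$, using $\frac{1-\nu}{1+\nu} \le 1$ and (if necessary) that the base exceeds $1$. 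Taking base-$2$ logarithms, the number of doublings from $M_{k,0} = M_{k-1}$ up to this threshold is at most $\log_2\!\big(\frac{k+1}{\epsilon}\big) + \log_2\!\big(M_\nu^{2/(1+\nu)}/M_{k-1}\big) + 1$; replacing $M_{k-1}$ by its worst-case lower bound $M_{-1}$ (again justified because $M_{k-1}$ never drops below $M_{-1}$ in Algorithm \ref{alg:A2}, since there is no halving step — each $M_k = M_{k,i_k} \ge M_{k,0} = M_{k-1}$, so the sequence $\{M_k\}$ is nondecreasing and $M_{k-1} \ge M_{-1}$) yields exactly the claimed bound.

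The main obstacle I anticipate is the bookkeeping in the first part: in Algorithm \ref{alg:A1} the halving step $M_{k,0} = 0.5 M_{k-1}$ breaks monotonicity of $\{M_k\}$, so one cannot simply say $M_{k-1} \ge M_{-1}$. The resolution is to show that $M_k$ stays bounded in $[\,M_{-1}/2,\, 2\widebar{M}_{\epsilon}\,]$ after a transient (or to simply accept a one-time additive $\log_2(\widebar{M}_{\epsilon}/M_{-1})$ charge and a uniform $O(1)$ per-iteration charge thereafter, which is the form actually used in \eqref{eq:N_G}); everything else is routine manipulation of the thresholds from Lemma \ref{le:key_properties} together with $t_k \le k+1$.
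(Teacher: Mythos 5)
Your proposal is correct and follows essentially the same route as the paper: the threshold from Lemma~\ref{le:key_properties} with $\delta=\epsilon$ (resp.\ $\delta=\epsilon/t_k$ together with the induction $t_k\le k+1$), combined with the geometric growth $M_{k,i}=2^iM_{k,0}$ and, for Algorithm~\ref{alg:A2}, the monotonicity $M_k\ge M_{k-1}\ge M_{-1}$. The subtlety you labor over in Algorithm~\ref{alg:A1} (the halving $M_{k,0}=0.5M_{k-1}$ possibly pushing the start point below $M_{-1}$) is one the paper simply glosses over by writing $M_{k,i}=2^iM_{k,0}\ge 2^iM_{-1}$, and the telescoping/total-cost accounting you fall back on is exactly how the paper obtains the oracle bound \eqref{eq:N_G}, so your treatment is, if anything, more careful on that point.
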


% Lemma 2.1.
%\begin{lemma}\label{le:line_search_termination}
%The line-search procedure in Algorithms \ref{alg:A1} and \ref{alg:A2} terminates after a finite number $i_k$ of iterations.
%\end{lemma}

% Proof of Lemma 2.1.
\begin{proof}
Under Assumption $\mathrm{A}.\ref{as:A2}$, $M_{\nu}$ defined in Lemma \ref{le:key_properties} is finite.
When $\delta_k = \epsilon > 0$ is fixed as in Algorithm \ref{alg:A1}, the upper bound $\widebar{M}_{\epsilon} = \left[\frac{1-\nu}{(1+\nu)\epsilon}\right]^{\frac{1-\nu}{1+\nu}}M_{\nu}^{\frac{2}{1+\nu}}$ defined by \eqref{eq:M_upper} is also finite. 
Moreover, the condition \eqref{eq:linesearch_cond} holds whenever $M_{k,i} \geq \widebar{M}_{\epsilon}$. Since $M_{k,i} = 2M_{k,i-1} = 2^iM_{k,0} \geq 2^i{M}_{-1}$, the linesearch procedure is terminated after at most $i_k = \lfloor\log_2(\widebar{M}_{\epsilon}/{M}_{-1})\rfloor+1$ iterations.

Now, we show that the line-search procedure in Algorithm \ref{alg:A2} is also finite.
By the updating rule of $t_k$, we have $t_{k+1} := 0.5(1 + \sqrt{1 + 4t_k^2}) \leq 0.5(1 + (1 + 2t_k)) = t_k + 1$. By induction and $t_0 = 1$, we have $t_k \leq k+1$.
Using the definition \eqref{eq:M_upper} of $\widebar{M}_{\delta_k}$ with $\delta_k = \frac{\epsilon}{t_k}$ and $t_k \leq k+1$, we can show that
\begin{equation}\label{eq:lemab1_est1}
\widebar{M}_{\delta_k} = \left[\frac{1-\nu}{1 + \nu}\frac{1}{\delta_k}\right]^{\frac{1-\nu}{1+\nu}}M_{\nu}^{\frac{2}{1 + \nu}} \leq \left[\frac{t_k}{\epsilon}\right]^{\frac{1-\nu}{1+\nu}}M_{\nu}^{\frac{2}{1 + \nu}}   \leq \left[\frac{k+1}{\epsilon}\right]^{\frac{1-\nu}{1+\nu}}M_{\nu}^{\frac{2}{1 + \nu}}.
\end{equation}
Next, we note that  the condition \eqref{eq:linesearch_cond} holds whenever $M_{k,i} \geq \widebar{M}_{\delta_k}$.
However, since $M_{k,i} = 2^iM_{k, 0} \geq 2^i{M}_{-1}$, by using \eqref{eq:lemab1_est1}, it is sufficient to show that the following condition holds for a finite $i$:
\begin{equation*}
2^i{M}_{-1} \geq \left[\frac{k+1}{\epsilon}\right]^{\frac{1-\nu}{1+\nu}}M_{\nu}^{\frac{2}{1 + \nu}}.
\end{equation*}
This condition leads to $i \geq \log_2\left(\left[\frac{k+1}{\epsilon}\right]^{\frac{1-\nu}{1+\nu}}M_{\nu}^{\frac{2}{1 + \nu}}\right) - \log_2({M}_{-1})$. 
Hence, at the $k$th iteration, we require at most $i_k = \left\lfloor\log_2\left(\frac{k+1}{\epsilon}\right) + \log_2\left(\frac{M_{\nu}^{\frac{2}{1+\nu}}}{{M}_{-1}}\right)\right\rfloor + 1$ line-search iterations, which is finite.
\end{proof}
% End proof.

%% C. The convergence analysis of the universal primal-dual proximal-gradient algorithm.
\section{Convergence analysis of the universal primal-dual gradient algorithm} \label{sec:conv_app}

%We analyze the convergence of the Algorithm \ref{alg:A1} (UniPDGrad) by proving Theorem \ref{th:convergence_PGA} and Theorem \ref{th:primal_recovery}.
In this section, we analyze the convergence of the Algorithm \ref{alg:A1} (UniPDGrad). 
We first provide the convergence guarantee of the dual function in Theorem \ref{th:convergence_PGA}. 
Then, we prove the convergence rate and the worst-case complexity given in Theorem \ref{th:primal_recovery}. 

%%% a) The dual proximal-gradient step.
%%\noindent\textbf{$\mathrm{(a)}$~The dual proximal-gradient step:}
%%The first main step of our universal primal-dual proximal-gradient algorithm is the dual step for updating $\lbd_k$.
%%At each iteration $k\geq 0$, we update $\lbd_{k+1}$ from $\lbd_k$ as:
%%\begin{equation}\label{eq:grad_scheme}
%%\lbd_{k+1} := \prox_{M_k^{-1}h}\left(\lbd_k - M_k^{-1}\nabla{g}(\lbd_k)\right),
%%\end{equation}
%%where $M_k$ is determined by a line-search procedure:
%%
%%%%
%%\noindent\textit{\underline{Line-search:}}
%%We choose an estimate $M_{0,0}$ for $\widebar{M}_{\epsilon}$ defined by \eqref{eq:M_upper}.
%%Then, at the iteration $k \geq 1$, we set $M_{k,i} := 0.5M_{k-1}$ and update $M_{k,i} := 2^iM_{k,0}$ such that:
%%\begin{equation}\label{eq:linesearch_cond2}
%%g(\lbd_{k+1}) \leq Q_{M_{k,i}}(\lbd_{k+1};  \lbd_k)  + \frac{\epsilon}{2}.
%%\end{equation}
%%Then, we choose $M_k := M_{k,i}$, where $i_k := i$ is the smallest integer number so that \eqref{eq:linesearch_cond2} holds.

\subsection{Convergence rate of the dual objective function}\label{apdx:th:convergence_PGA}
% Theorem 4.1.
\begin{theorem}\label{th:convergence_PGA}
%Let $\set{\lbd_k}$ be the sequence generated by \eqref{eq:linesearch_cond2}. 
%Then, for any $\lbd\in \R^n$, the following statements hold:
%\begin{itemize}
%\item[$\mathrm{i})$] The sequence $\{\bar{G}_k\}$ defined by \eqref{eq:average_sequences} satisfies:
%\begin{align}\label{eq:pga_estimate1}
%\bar{G}_k \!-\! G(\lbd) \leq \frac{\widebar{M}_{\epsilon}}{k\!+\!1}\norm{\lbd_0 \!-\! \lbd}^2 \!+\! \frac{\epsilon}{2},
%\end{align}
%where $\widebar{M}_{\epsilon}$ is defined by \eqref{eq:M_upper}.
%\item[$\mathrm{ii)}$] The sequence $\{\bar{\lbd}_k\}$ defined by \eqref{eq:average_sequences} satisfies:
%\begin{align}\label{eq:pga_estimate2}
%G(\bar{\lbd}_k) \!-\! G(\lbd) \leq \frac{\widebar{M}_{\epsilon}}{k\!+\!1}\norm{\lbd_0 \!-\! \lbd}^2 \!+\! \frac{\epsilon}{2},
%\end{align}
%\item[$\mathrm{iii)}$] The sequence $\{\lbd_k\}_{k\geq 0}$ defined by \eqref{eq:grad_scheme} satisfies:
%\begin{align}\label{eq:pga_estimate3}
%G(\lbd_{k\!+\!1}) \!-\! G(\lbd) \leq \frac{2\widebar{M}_{\epsilon}}{k\!+\!1}\norm{\lbd_0 \!-\! \lbd}^2 \!+\! \frac{(k\!+\!1)\epsilon}{2}.
%\end{align}
%\end{itemize}
Let $\set{\lbd_k}$ be the sequence generated by UniPDGrad. Then,
\begin{equation}\label{eq:pga_estimate1}
G(\bar{\lbd}_k) - G(\lbd) \leq \bar{G}_k - G(\lbd) \leq \frac{\widebar{M}_{\epsilon}}{k+1}\norm{\lbd_0 - \lbd}^2 + \frac{\epsilon}{2},
\end{equation}
for any $\lbd\in\R^n$, where $\widebar{M}_{\epsilon}$ is defined by \eqref{eq:M_upper} and the two averaging sequences $\set{\bar{\lbd}_k}$ and $\{\bar{G}_k\}$ are defined as follows:
\begin{equation*}\label{eq:average_sequences}
%{\!\!}\bar{\lbd}_k \!:=\! \frac{1}{S_k}\!\sum_{i=0}^k\frac{1}{M_i}\lbd_{i\!+\!1} ~\text{and}~\bar{G}_k \!:=\! \frac{1}{S_k}\!\sum_{i=0}^k\frac{1}{M_i}G(\lbd_{i\!+\!1}),{\!\!}
\bar{\lbd}_k := \frac{1}{S_k} \sum_{i=0}^k\frac{1}{M_i}\lbd_{i+1} ~~~and~~~ \bar{G}_k  :=  \frac{1}{S_k} \sum_{i=0}^k\frac{1}{M_i}G(\lbd_{i+1}), ~~~where~~~ S_k := \sum_{i=0}^k\frac{1}{M_i}. 
\end{equation*}
%with $S_k := \sum_{i=0}^k\frac{1}{M_i}$.
\end{theorem}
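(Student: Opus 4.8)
The plan is to combine the key one-step estimate of Lemma~\ref{le:key_bound} with a telescoping argument over the iteration index, exploiting the specific choice of weights $w_k = 1/M_k$ used in UniPDGrad. Since UniPDGrad uses $\hat{\lbd}_k = \lbd_k$ (no extrapolation), Lemma~\ref{le:key_bound} applied at iteration $i$ with $\delta_i = \epsilon$ (which holds by the line-search condition at Step~3.b) gives, for an arbitrary fixed $\lbd\in\R^n$,
\begin{equation*}
G(\lbd_{i+1}) \leq g(\lbd_i) + \iprods{\nabla g(\lbd_i), \lbd - \lbd_i} + h(\lbd) + \frac{\epsilon}{2} + \frac{M_i}{2}\left[\norm{\lbd - \lbd_i}^2 - \norm{\lbd - \lbd_{i+1}}^2\right].
\end{equation*}
By convexity of $g$, we have $g(\lbd_i) + \iprods{\nabla g(\lbd_i), \lbd - \lbd_i} \leq g(\lbd)$, so the bracketed linearization term is bounded by $g(\lbd) + h(\lbd) = G(\lbd)$. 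First I would multiply this inequality through by $w_i = 1/M_i$, obtaining
\begin{equation*}
\frac{1}{M_i}\big(G(\lbd_{i+1}) - G(\lbd)\big) \leq \frac{\epsilon}{2M_i} + \frac{1}{2}\norm{\lbd - \lbd_i}^2 - \frac{1}{2}\norm{\lbd - \lbd_{i+1}}^2.
\end{equation*}

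Next I would sum this over $i = 0, 1, \dots, k$. The distance terms telescope, leaving $\tfrac{1}{2}\norm{\lbd-\lbd_0}^2 - \tfrac{1}{2}\norm{\lbd-\lbd_{k+1}}^2 \leq \tfrac{1}{2}\norm{\lbd-\lbd_0}^2$, and the $\epsilon$-terms sum to $\tfrac{\epsilon}{2}\sum_{i=0}^k \tfrac{1}{M_i} = \tfrac{\epsilon}{2}S_k$. Dividing by $S_k$ and using the definitions of $\bar\lbd_k$ and $\bar G_k$ yields
\begin{equation*}
\bar G_k - G(\lbd) = \frac{1}{S_k}\sum_{i=0}^k \frac{1}{M_i}\big(G(\lbd_{i+1}) - G(\lbd)\big) \leq \frac{\norm{\lbd_0 - \lbd}^2}{2S_k} + \frac{\epsilon}{2}.
\end{equation*}
The inequality $G(\bar\lbd_k) \leq \bar G_k$ then follows from Jensen's inequality (convexity of $G$), since $\bar\lbd_k$ is the convex combination $\sum_i (w_i/S_k)\lbd_{i+1}$ of the points $\lbd_{i+1}$. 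It remains to lower-bound $S_k$: because the line-search terminates with $M_i \leq \widebar M_\epsilon$ (by Lemma~\ref{le:line_search_termination}, or directly from Lemma~\ref{le:key_properties} each $M_i$ at termination is at most $2\widebar M_\epsilon$; one checks the doubling cannot overshoot by more than a factor $2$, but actually the cleaner route is that $M_i \le \widebar M_\epsilon$ is ensured because $\widebar M_\epsilon$ itself satisfies the line-search inequality), we get $1/M_i \geq 1/\widebar M_\epsilon$, hence $S_k \geq (k+1)/\widebar M_\epsilon$. Substituting this into the bound gives $\bar G_k - G(\lbd) \leq \tfrac{\widebar M_\epsilon}{2(k+1)}\norm{\lbd_0-\lbd}^2 + \tfrac{\epsilon}{2}$, which is even slightly sharper than claimed; the stated $\tfrac{\widebar M_\epsilon}{k+1}\norm{\lbd_0-\lbd}^2$ bound then follows a fortiori.

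The main obstacle I anticipate is the careful handling of the line-search constant: one must verify that the accepted $M_i$ satisfies $M_i \leq \widebar M_\epsilon$, which relies on the fact that $\widebar M_\epsilon$ (as defined in \eqref{eq:M_upper}) already makes the line-search condition \eqref{eq:linesearch_cond} hold via Lemma~\ref{le:key_properties}, so the doubling stops no later than the first $M_{k,i} \geq \widebar M_\epsilon/2$ — hence $M_i < \widebar M_\epsilon$ whenever $M_{k,0} < \widebar M_\epsilon$, and an inductive argument on $M_{k,0} = 0.5 M_{k-1}$ starting from $M_{-1} \leq \widebar M_\epsilon$ preserves this. Everything else is bookkeeping: choosing $\delta_i = \epsilon$ consistently, tracking that $\nabla g(\lbd_i) = \bb - \Ab\xb^*(\lbd_i)$ is a genuine subgradient so convexity applies, and noting the estimate holds for \emph{every} $\lbd$, in particular for a dual optimum, which is what Theorem~\ref{th:primal_recovery} will need downstream.
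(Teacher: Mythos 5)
Your overall route is exactly the paper's: apply Lemma~\ref{le:key_bound} with $\hat{\lbd}_i=\lbd_i$ and $\delta_i=\epsilon$, absorb the linearization into $G(\lbd)$ by convexity of $g$, take the $1/M_i$-weighted sum so the distance terms telescope, invoke Jensen for $G(\bar\lbd_k)\le\bar G_k$, and finish with a lower bound on $S_k$. The one genuine flaw is the last step: your claim that the accepted constants satisfy $M_i\le\widebar M_\epsilon$ is not justified, and your stated reason (``the doubling stops no later than the first $M_{k,i}\ge\widebar M_\epsilon/2$'') is wrong. Lemma~\ref{le:key_properties} guarantees the line-search condition \eqref{eq:linesearch_cond} only once $M_{k,i}\ge\widebar M_\epsilon$; a trial value anywhere in $[\widebar M_\epsilon/2,\widebar M_\epsilon)$ may fail, and then doubles to a value in $[\widebar M_\epsilon,2\widebar M_\epsilon)$ which is accepted. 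So the correct uniform bound is $M_i\le 2\widebar M_\epsilon$ (equivalently $M_i<2\widebar M_\epsilon$), not $M_i\le\widebar M_\epsilon$, and your induction ``$M_{k,0}=0.5M_{k-1}\le 0.5\widebar M_\epsilon$ implies the accepted $M_k\le\widebar M_\epsilon$'' fails for the same reason (e.g.\ $M_{k,0}=0.4\widebar M_\epsilon$ can fail twice and be accepted at $1.6\widebar M_\epsilon$).

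The fix is immediate and is what the paper does: use $M_i\le 2\widebar M_\epsilon$, hence $S_k\ge\frac{k+1}{2\widebar M_\epsilon}$, which turns your intermediate estimate $\bar G_k-G(\lbd)\le\frac{\norm{\lbd_0-\lbd}^2}{2S_k}+\frac{\epsilon}{2}$ into precisely the stated bound $\frac{\widebar M_\epsilon}{k+1}\norm{\lbd_0-\lbd}^2+\frac{\epsilon}{2}$. In particular, the ``slightly sharper'' constant $\frac{\widebar M_\epsilon}{2(k+1)}$ you announce is not established by your argument; the theorem itself, however, does follow once this factor of $2$ is corrected, and everything else in your write-up matches the paper's proof.
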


\begin{proof}
For $\widebar{M}_{\epsilon}$ defined by \eqref{eq:M_upper}, since the line-search is successful as shown in Lemma \ref{le:key_properties}, the condition \eqref{eq:linesearch_cond} is satisfied at iteration $i$ with $M_i \leq 2\widebar{M}_{\epsilon}$.
The following inequality directly follows Lemma \ref{le:key_bound} considering the convexity of $g$:%$~g(\lbd_i) + \iprods{\nabla{g}(\lbd_i), \lbd - \lbd_i} \leq g(\lbd):$
\begin{equation*}%\label{eq:th33_est1} 
G(\lbd_{i+1})  \leq G(\lbd) + \frac{\epsilon}{2} + \frac{M_i}{2} \left[ \norm{\lbd \!-\! \lbd_i}^2 - \norm{\lbd \!-\! \lbd_{i\!+\!1}}^2 \right],~~~~~~\forall \lambda \in \R^n.
\end{equation*}
Taking the weighted sum of this inequality over $i$, we get
\begin{equation}\label{eq:th33_est3}
\bar{G}_k \leq G(\lbd) + \frac{\epsilon}{2} + \frac{1}{2 S_k} \left[ \norm{\lbd \!-\! \lbd_0}^2 -  \norm{\lbd \!-\! \lbd_{k+1}}^2 \right],
\end{equation}
for any $\lbd \in \R^n$, and $G(\bar{\lbd}_k) \leq \bar{G}_k$ since $G$ is a convex function. 
Finally, since $M_i \leq 2\widebar{M}_{\epsilon}$, we have $S_k \geq \frac{(k+1)}{2\widebar{M}_{\epsilon}}$. 
Substituting this estimate into \eqref{eq:th33_est3}, we obtain \eqref{eq:pga_estimate1}.
%\Eproof
\end{proof}

\subsection{The proof of Theorem \ref{th:primal_recovery}: Convergence rate of the primal sequence}\label{apdx:th:primal_recovery}
\begin{proof}
We use the following three expressions to relate the convergence in the dual sequence to the convergence in the primal sequence:
\begin{align}\label{eq:mapping_expr}
\begin{array}{ll}
g(\lbd_i) 				& = -f(\xb^\ast(\lbd_i)) + \langle \lbd_i, \bb - \Ab\xb^\ast (\lbd_i) \rangle, \vspace{0.75ex}\\
\nabla g(\lbd_i) 			& = \bb - \Ab\xb^\ast(\lbd_i),  \vspace{0.75ex}\\
G(\lbd_{i+1}) 			& \geq G^\star = -d^\star = -f^\star.
\end{array}
\end{align}
Substituting these expressions into Lemma \ref{le:key_bound}, we get the following key estimate in the primal space that holds for any $\lbd \in \R^n$:
\begin{equation*}
f(\xb^{*}(\lbd_i)) - f^\star \leq  \iprods{\bb - \Ab\xb^{*}(\lbd_i), \lbd}  +  h(\lbd) + \frac{\epsilon}{2} +  \frac{M_i}{2}\left[ \norm{\lbd - \lbd_i}^2 - \norm{\lbd - \lbd_{i+1}}^2 \right].
\end{equation*}

Taking the weighted sum of this inequality over $i$ and considering the convexity of $f$, we get
\begin{equation}\label{eq:key_est_prim}
f(\bar{\xb}_k) - f^\star  \leq  \iprods{\bb - \Ab\bar{\xb}_k, \lbd}  +  h(\lbd) + \frac{\epsilon}{2} + \frac{1}{2 S_k} \left[ \norm{\lbd \!-\! \lbd_0}^2 -  \norm{\lbd \!-\! \lbd_{k+1}}^2 \right].
\end{equation}

%\subsubsection{Convergence in objective residual}

Setting $\lbd = \mathbf{0}^n$, we get the bound on the right hand side of \eqref{eq:primal_recovery1a},
\begin{equation*}
f(\bar{\xb}_k) - f^\star	 \leq 	\frac{\epsilon}{2} + \frac{\norm{\lbd_0}^2}{2 S_k} \leq \frac{\epsilon}{2} + \frac{\widebar{M}_{\epsilon} \norm{\lbd_0}^2}{k+1}.
\end{equation*}

%Setting $\lbd = \lbd_o$, we get the right hand side of the bound for objective residual:
%\begin{align}
%f(\bar{\xb}_k) - f^\star		& 	\leq 	\frac{\epsilon}{2} + \frac{1}{2 S_k} \left[ \norm{\lbd_0}^2 - \norm{ \lbd_0 - S_k(\Ab\bar{\xb}_k - \bb - \bar{\rb}_k) }^2 \right] 			\nonumber \\
%					&	=	\frac{\epsilon}{2} + \iprods{\lbd_0, \Ab\bar{\xb}_k - \bb - \bar{\rb}_k} - \frac{S_k}{2} \norm{\Ab\bar{\xb}_k - \bb - \bar{\rb}_k}^2 		\nonumber \\
%					& 	\leq	\frac{\epsilon}{2} + \norm{\lbd_0} \norm{\Ab\bar{\xb}_k - \bb - \bar{\rb}_k} - \frac{S_k}{2} \norm{\Ab\bar{\xb}_k - \bb - \bar{\rb}_k}^2 \leq \frac{\epsilon}{2}. 	\nonumber \\
%\end{align}
					
The inequality on the left hand side of \eqref{eq:primal_recovery1} follows the following saddle point formulation:
\begin{equation}
f^{\star} \leq \Lc(\xb,\rb, \lbd^{\star}) = f(\xb) + \iprods{\lbd^{\star}, \Ab\xb - \bb - \rb} \leq f(\xb) + \norm{\lbd^{\star}}\norm{\Ab\xb - \bb - \rb}, \label{eq:saddle_form}
\end{equation}
$\forall \rb \in \Kc$ and $\forall \xb \in \Xc$, where the last inequality holds due to Cauchy-Schwarz inequality. 
The proof of the convergence rate in the objective residual \eqref{eq:primal_recovery1} follows by setting $\xb = \bar{\xb}$ in \eqref{eq:saddle_form}. 

Next, we prove the convergence rate of the feasibility gap \eqref{eq:primal_recovery2}. 
We start from the following saddle point formulation:
\begin{equation*}
f^{\star} \leq \Lc(\xb,\rb, \lbd^{\star}) = f(\xb) + \iprods{\lbd^{\star}, \Ab\xb - \bb - \rb},~~~~~~~~~~~~\forall \rb \in \Kc,~\forall \xb \in \Xc.
\end{equation*}

%\subsubsection{Convergence in feasibility gap}
%Substituting this estimate with $\xb = \bar{\xb}_k$ and $\rb = \bar{\rb}_k$ into \eqref{eq:key_est_prim}, we get for any $\lbd \in \R^n$:
%\begin{equation*}
% \iprods{ \Ab\bar{\xb}_k - \bb - \bar{\rb}_k, \lbd - \lbd^\star} - \frac{1}{2 S_k} \left[ \norm{\lbd \!-\! \lbd_0}^2  - \norm{\lbd - \lbd_{k+1}}^2 \right] \leq  \frac{\epsilon}{2}.
%\end{equation*}
%Setting $\lbd = \lbd_0$, we get:
%\begin{equation*}
%\iprods{ \Ab\bar{\xb}_k - \bb - \bar{\rb}_k, \lbd_0 - \lbd^\star} + \frac{S_k}{2} \norm{ \Ab\bar{\xb}_k - \bb - \bar{\rb}_k}^2  \leq  \frac{\epsilon}{2}.
%\end{equation*}
%Using Cauchy-Schwarz inequality:
%\begin{equation}
%- \| \Ab\bar{\xb}_k - \bb - \bar{\rb}_k\| \| \lbd_0 - \lbd^\star \| + \frac{S_k}{2} \norm{ \Ab\bar{\xb}_k - \bb - \bar{\rb}_k}^2  \leq  \frac{\epsilon}{2}.
%\end{equation}
%Solving this inequality for $\| \Ab\bar{\xb}_k - \bb - \bar{\rb}_k\|$, we get:
%\begin{align}
%\dist{\Ab\bar{\xb}_k  - \bb, \Kc} 		&	\leq \| \Ab\bar{\xb}_k - \bb - \bar{\rb}_k\| 														\nonumber \\
%							& 	\leq \frac{1}{S_k}\left[\norm{\lbd_0 - \lbd^{\star}} + \sqrt{\norm{\lbd_0 - \lbd^{\star}}^2 + S_k\epsilon}\right] 		\nonumber \\
%							&	\leq \frac{1}{S_k}\left[2\norm{\lbd_0 - \lbd^{\star}} + \sqrt{S_k\epsilon}\right] 
%\end{align}
%We note that $S_k \geq \frac{k+1}{2\widebar{M}_{\epsilon}}$, which completes the proof.
Substituting this estimate with $\xb = \bar{\xb}_k$ %and $\rb = \rb^\ast(\lbd) = \arg \max_{\rb \in \Kc} \langle \rb, \lbd \rangle$ 
into \eqref{eq:key_est_prim}, we get the following inequality:
\begin{align*}
 \iprods{ \Ab\bar{\xb}_k - \bb - \rb^\ast(\lbd), \lbd - \lbd^\star} - \frac{1}{2 S_k} \left[ \norm{\lbd \!-\! \lbd_0}^2  - \norm{\lbd - \lbd_{k+1}}^2 \right] & \leq  \frac{\epsilon}{2} \\
\implies  \min_{\rb \in \Kc} \left\{ \iprods{ \Ab\bar{\xb}_k - \bb - \rb, \lbd - \lbd^\star} - \frac{1}{2 S_k} \norm{\lbd \!-\! \lbd_0}^2 \right\} & \leq  \frac{\epsilon}{2} \\
\implies  \max_{\lbd \in \R^n} \min_{\rb \in \Kc} \left\{ \iprods{ \Ab\bar{\xb}_k - \bb - \rb, \lbd - \lbd^\star} - \frac{1}{2 S_k} \norm{\lbd \!-\! \lbd_0}^2 \right\} & \leq  \frac{\epsilon}{2} \\ 
\implies   \min_{\rb \in \Kc} \max_{\lbd \in \R^n} \left\{ \iprods{ \Ab\bar{\xb}_k - \bb - \rb, \lbd - \lbd^\star} - \frac{1}{2 S_k} \norm{\lbd \!-\! \lbd_0}^2 \right\} & \leq  \frac{\epsilon}{2} \\
\implies \min_{\rb \in \Kc} \left\{ \iprods{ \Ab\bar{\xb}_k - \bb - \rb, \lbd_0 - \lbd^\star} + \frac{S_k}{2} \norm{\Ab\bar{\xb}_k - \bb - \rb}^2 \right\} & \leq  \frac{\epsilon}{2} 
\end{align*}
 for any $\lbd \in \R^n$, where $\rb^\ast(\lbd) := \arg\max_{\rb \in \Kc} ~\iprods{\rb,\lbd}$, and the third implication holds due to the Sion's minimax theorem. 
Hence, there exists a vector $\bar{\rb} \in \Kc$, that satisfies the following inequality:
\begin{equation*}
\iprods{ \Ab\bar{\xb}_k - \bb - \bar{\rb}, \lbd_0 - \lbd^\star} + \frac{S_k}{2} \norm{\Ab\bar{\xb}_k - \bb - \bar{\rb}}^2 \leq \frac{\epsilon}{2} .
\end{equation*}
Using Cauchy-Schwarz inequality, this implies
\begin{equation*}
- \| \Ab\bar{\xb}_k - \bb - \bar{\rb}\| \| \lbd_0 - \lbd^\star \| + \frac{S_k}{2} \norm{ \Ab\bar{\xb}_k - \bb - \bar{\rb}}^2  \leq  \frac{\epsilon}{2}.
\end{equation*}
Solving this inequality for $\| \Ab\bar{\xb}_k - \bb - \bar{\rb}\|$, we get
\begin{align}
\dist{\Ab\bar{\xb}_k  - \bb, \Kc} 		&	\leq \| \Ab\bar{\xb}_k - \bb - \bar{\rb}\| 														\nonumber \\
							& 	\leq \frac{1}{S_k}\left[\norm{\lbd_0 - \lbd^{\star}} + \sqrt{\norm{\lbd_0 - \lbd^{\star}}^2 + S_k\epsilon}\right] 		\nonumber \\
							&	\leq \frac{1}{S_k}\left[2\norm{\lbd_0 - \lbd^{\star}} + \sqrt{S_k\epsilon}\right] .							\nonumber
\end{align}
We note that $S_k \geq \frac{k+1}{2\widebar{M}_{\epsilon}}$, and this completes the proof. 
\end{proof}

\subsubsection{The worst-case complexity analysis}
%We now analyze the worst-case complexity of UniPDGrad. 
%Indeed, by the same argument as in \cite{Nesterov2014}, we can show that $G(\bar{\lbd}_k) - G^{\star} \leq \varepsilon$ if $\frac{\widebar{M}_{\epsilon}}{k+1}\norm{\lbd_0 - \lbd^{\star}}^2 \leq \frac{\epsilon}{2}$ due to \eqref{eq:pga_estimate1}, %i.e., $k + 1 \geq \frac{2\widebar{M}_{\varepsilon}\norm{\lbd_0 - \lbd^{\star}}^2}{\epsilon}$.
%which leads to: 
%\begin{equation*}
%k + 1\geq 2\left[\frac{M_{\nu}}{\epsilon}\right]^{\frac{2}{1+\nu}}\left[\frac{1-\nu}{1 + \nu}\right]^{\frac{1-\nu}{1 + \nu}}\norm{\lbd_0 - \lbd^{\star}}^2.
%\end{equation*} 
%using the definition \eqref{eq:M_upper} of $\widebar{M}_{\epsilon}$.
%Since $\left[\frac{1-\nu}{1 + \nu}\right]^{\frac{1-\nu}{1 + \nu}} \leq 1$ for $\nu \in [0, 1]$, by Assumption $\mathrm{A}.\ref{as:A2}$, this inequality leads to $k =\left\lfloor2\norm{\lbd_0 - \lbd^{\star}}^2\inf_{0\leq\nu\leq 1}\left(\frac{M_{\nu}}{\epsilon}\right)^{\frac{2}{1+\nu}}\right\rfloor$. % as shown in \eqref{eq:worst_case_UPA}.

%%Finally, choosing $\lbd_0 := \boldsymbol{0}^n$, the estimates in Theorem \ref{th:primal_recovery} leads to:
%%\begin{equation*}%\label{eq:primal_recovery_a}
%%-\norm{\lbd^{\star}}\dist{\Ab\bar{\xb}_k  - \bb, \Kc} \leq f(\bar{\xb}^k) - \fopt \leq \frac{\epsilon}{2},
%%\end{equation*}
%%\begin{equation*}
%%\dist{\Ab\bar{\xb}_k  - \bb, \Kc} \leq \frac{4\widebar{M}_{\epsilon}}{k + 1}\norm{ \lbd^{\star}}  +  \sqrt{\frac{2\widebar{M}_{\epsilon}\epsilon}{k+1}}. 
%%\end{equation*}
For simplicity, we choose $\lbd_0 = \boldsymbol{0}^n$ without loss of generality. 
Then, in order to guarantee both $\dist{\Ab\bar{\xb}_k \!-\! \bb, \Kc}\leq\epsilon$ and $\vert f(\xbar_k) - \fopt\vert \leq \epsilon$, we require 
$\left[\frac{4\widebar{M}_{\epsilon}}{k + 1}\norm{\lbd^{\star}} +  \sqrt{\frac{2\widebar{M}_{\epsilon}\epsilon}{k + 1}}\right] \norm{\lbd^\star}_{[1]} \leq\epsilon$ due to Theorem \ref{th:primal_recovery}, where $\norm{\lbd^\star}_{[1]} := \max{\{\norm{ \lbd^{\star}},1\}}$. 
This leads to \eqref{eq:worst_case_UPA} as
\begin{equation*}
k+1 \geq \left[ \frac{4\sqrt{2}\norm{\lbd^\star}}{-1\!+\!\sqrt{1\!+\!8\frac{\norm{\lbd^\star}}{\norm{\lbd^\star}_{[1]}}}} \right]^2\!\frac{\widebar{M}_{\epsilon}}{\epsilon} ~~~\Rightarrow~~~ k_{\max} = \left\lfloor \left[ \frac{4\sqrt{2}\norm{\lbd^\star}}{-1\!+\!\sqrt{1\!+\!8\frac{\norm{\lbd^\star}}{\norm{\lbd^\star}_{[1]}}}} \right]^2 \!\! \inf_{0\leq\nu\leq 1}\left(\frac{M_{\nu}}{\epsilon}\right)^{\frac{2}{1+\nu}}\right\rfloor.
\end{equation*}
Hence, the worst-case complexity to obtain an $\epsilon$-solution of \eqref{eq:constr_cvx} in the sense of Definition \ref{de:approx_sols} is 
\begin{equation*}
\mathcal{O}\left(\inf_{0\leq\nu\leq 1}\left(\frac{M_{\nu}}{\epsilon}\right)^{\frac{2}{1+\nu}}\right),
\end{equation*}
which is optimal if $\nu = 0$.

Next, we estimate the total number of oracle quires in UniPDGrad, as in \cite{Nesterov2014supp}.
%In the $j$th iteration, the algorithm requires one gradient evaluation and $i_j$ function evaluations of $g$. 
%Hence, 
The total number of oracle quires up to the iteration $k$ is given by $N_1(k) = \sum_{j=0}^k(i_j + 1)$.
However, since $i_j -1 = \log_2(M_j/M_{j-1})$, we have
\begin{equation*}
N_1(k) = \sum_{j=0}^k(i_j + 1) = 2(k+1) + \log_2(M_k) - \log_2(M_{-1}). %= 2(k+1) + \log_2(M_k) - \log_2(\underline{M}).
\end{equation*}
It remains to use $M_k \leq 2 \widebar{M}_{\epsilon}$ to obtain \eqref{eq:N_G}.

%%%% 
%%%%% FROM HERE TO BELOW, 4 x % .
%%%%

% D. The convergence of the acerbated universal primal-dual proximal-gradient algorithm.
\section{Convergence  analysis of the accelerated  universal primal-dual algorithm}\label{sec:conv_app_acc}
We now analyze the convergence of AccUniPDGrad (Algorithm \ref{alg:A2}) in terms of the objective residual and the feasibility gap.

%% 5. Accelerated primal-dual universal proximal-gradient scheme
% \noindent\textbf{$\mathrm{(a)}$~Accelerated universal dual proximal-gradient step:}
The dual main step of our algorithm is to update $\lbd_{k+1}$ and $\tilde{\lbd}_{k+1}$ from $\hat{\lbd}_k$ and $\tilde{\lbd}_k$ as follows:
\begin{equation}\label{eq:fast_uproxgrad}
\left\{\begin{array}{ll}
\hat{\lbd}_k &:= (1-\tau_k)\lbd_k + \tau_k\tilde{\lbd}_k\\
\lbd_{k+1} &:= \prox_{M_k^{-1}h}\left(\hat{\lbd}_k - M_k^{-1}\nabla{g}(\hat{\lbd}_k)\right)\\
\tilde{\lbd}_{k+1} &:= \tilde{\lbd}_k - \frac{1}{\tau_k}\left(\hat{\lbd}_k - \lbd_{k+1}\right),
\end{array}\right.
\end{equation}
%where $M_k$ is obtained by the line-search procedure specified below, and the step size $\tau_k \in [0, 1]$.
%At the initial iteration $k := 0$, we set $\tilde{\lbd}_0 := \lbd_0$ and $\tau_0 := 1$. 
where $\tilde{\lbd}_0 = \lbd_0$, $\tau_0 = 1$ and 
\begin{align}\label{eq:update_tau_cond}
\tau_k^2 = \tau_{k-1}^2(1-\tau_k).
\end{align}
The parameter $M_k$ is determined based on the following line-search condition:
\begin{equation}\label{eq:ls_cond3}
g(\lbd_{k+1}) \leq g(\hat{\lbd}_k) + \iprods{\nabla{g}(\hat{\lbd}_k), \lbd_{k+1} - \hat{\lbd}_k} + \frac{M_k}{2}\norm{\lbd_{k+1} - \hat{\lbd}_k}^2 + \frac{\epsilon}{2t_k},
\end{equation}
with $M_k \geq M_{k-1}$. % for $k\geq 0$.

Next, we simplify the scheme \eqref{eq:fast_uproxgrad} in the following lemma:

% Lemma 3.5.
\begin{lemma}\label{le:fast_uproxgrad}
The scheme \eqref{eq:fast_uproxgrad} can be restated as follows:
\begin{equation}\label{eq:fast_uproxgrad_short}
\left\{\begin{array}{ll}
\lbd_{k+1} &:= \mathrm{prox}_{M_k^{-1}h}\big(\hat{\lbd}_k - M_k^{-1}\nabla{g}(\hat{\lbd}_k)\big)\\
t_{k+1}      & := \frac{1}{2}\big[1 + \sqrt{1 + 4t_k^2}\big]\\
\hat{\lbd}_{k+1} &:= \lbd_{k+1} + \frac{t_k - 1}{t_{k+1}}\left(\lbd_{k+1} - \lbd_{k}\right),
\end{array}\right.
\end{equation}
where $\hat{\lbd}_0 = \lbd_0$ and $t_0 = 1$, and $M_k$ is determined based on the line-search condition \eqref{eq:ls_cond3}.
%The parameter $M_k$ is determined based on the following line-search condition:
%\begin{equation}\label{eq:ls_cond3}
%g(\lbd_{k+1}) \leq g(\hat{\lbd}_k) + \iprods{\nabla{g}(\hat{\lbd}_k), \lbd_{k+1} - \hat{\lbd}_k} + \frac{M_k}{2}\norm{\lbd_{k+1} - \hat{\lbd}_k}^2 + \frac{\epsilon}{2t_k},
%\end{equation}
%with $M_k \geq M_{k-1}$. % for $k\geq 0$.

This dual scheme is of the FISTA form \cite{Beck2009supp}, except for the line-search step.% that requires the accuracy adaptation.
\end{lemma}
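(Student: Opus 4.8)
The plan is to show that the three-line recursion \eqref{eq:fast_uproxgrad} together with the parameter rule \eqref{eq:update_tau_cond} is equivalent to the FISTA-type recursion \eqref{eq:fast_uproxgrad_short}. The proximal step is literally identical in both formulations, so the work is entirely in reconciling the momentum bookkeeping: I must relate the auxiliary sequence $\{\tilde{\lbd}_k\}$ and the weights $\{\tau_k\}$ in \eqref{eq:fast_uproxgrad} to the classical $\{t_k\}$ and the extrapolation coefficient $(t_k-1)/t_{k+1}$ in \eqref{eq:fast_uproxgrad_short}. The natural change of variables is $\tau_k = 1/t_k$, and I would first verify that under this substitution the quadratic relation \eqref{eq:update_tau_cond}, namely $\tau_k^2 = \tau_{k-1}^2(1-\tau_k)$, becomes $t_{k-1}^2 = t_k^2 - t_k$, i.e. $t_k^2 = t_{k-1}^2 + t_k$; solving this for $t_k$ (taking the positive root) gives exactly $t_k = \tfrac12(1+\sqrt{1+4t_{k-1}^2})$, matching the update for $t_{k+1}$ in \eqref{eq:fast_uproxgrad_short} after an index shift. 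The base cases agree since $\tau_0 = 1 = 1/t_0$.

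Next I would eliminate $\tilde{\lbd}_k$. From the third line of \eqref{eq:fast_uproxgrad}, $\tilde{\lbd}_{k+1} = \tilde{\lbd}_k - \tau_k^{-1}(\hat{\lbd}_k - \lbd_{k+1})$, and from the first line $\hat{\lbd}_{k+1} = (1-\tau_{k+1})\lbd_{k+1} + \tau_{k+1}\tilde{\lbd}_{k+1}$. The idea is to substitute the expression for $\tilde{\lbd}_{k+1}$ into the formula for $\hat{\lbd}_{k+1}$, then use the first line at index $k$ to solve for $\tilde{\lbd}_k$ in terms of $\hat{\lbd}_k$ and $\lbd_k$, namely $\tilde{\lbd}_k = \tau_k^{-1}(\hat{\lbd}_k - (1-\tau_k)\lbd_k)$. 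Plugging this in and collecting terms should express $\hat{\lbd}_{k+1}$ purely in terms of $\lbd_{k+1}$, $\lbd_k$, and $\hat{\lbd}_k$. A short computation then needs to show that the $\hat{\lbd}_k$ terms cancel and that the coefficient multiplying $(\lbd_{k+1}-\lbd_k)$ simplifies, using \eqref{eq:update_tau_cond} to rewrite $\tau_{k+1}(1-\tau_k)/\tau_k$ (or whatever the precise coefficient turns out to be) as $(1-\tau_k)/(\tau_k) \cdot \tau_{k+1}$, which under $\tau_j = 1/t_j$ equals $(t_k - 1)/t_{k+1}$. This is the key algebraic identity and I expect it is where one has to be careful with the index shifts and with verifying that $\hat{\lbd}_0 = \lbd_0$ is consistent (it follows since $\tau_0 = 1$ forces $\hat{\lbd}_0 = \lbd_0$ regardless of $\tilde{\lbd}_0$, and $\tilde{\lbd}_0 = \lbd_0$ is given).

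Finally, I would note that the line-search condition \eqref{eq:ls_cond3} is exactly the condition stated in Step 3.b of Algorithm \ref{alg:A2} (recall $Q_{M_k}(\lbd_{k+1};\hat{\lbd}_k)$ is defined as the right-hand side without the $\epsilon/(2t_k)$ slack), so nothing needs to be checked there beyond observing the definitions coincide; the determination of $M_k$ with $M_k \ge M_{k-1}$ is carried over verbatim. The remark that \eqref{eq:fast_uproxgrad_short} is of FISTA form apart from the line-search is then immediate by comparison with \cite{Beck2009supp}. The main obstacle is purely the momentum-coefficient algebra in the second paragraph: getting the extrapolation weight to collapse to $(t_k-1)/t_{k+1}$ requires using the defining quadratic relation for $\tau_k$ at the right index, and an off-by-one there produces a coefficient that looks wrong; I would therefore carry out that elimination carefully, double-checking against the known $k=0,1$ iterates of FISTA as a sanity check.
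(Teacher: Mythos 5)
Your proposal is correct and follows essentially the same route as the paper's proof: substitute $\tau_k = 1/t_k$, eliminate $\tilde{\lbd}_k$ via $\tilde{\lbd}_k = \tau_k^{-1}[\hat{\lbd}_k - (1-\tau_k)\lbd_k]$ to obtain $\hat{\lbd}_{k+1} = \lbd_{k+1} + \frac{\tau_{k+1}(1-\tau_k)}{\tau_k}(\lbd_{k+1}-\lbd_k)$ with coefficient $(t_k-1)/t_{k+1}$, and convert \eqref{eq:update_tau_cond} into $t_{k+1}^2 - t_{k+1} - t_k^2 = 0$ to get the $t_{k+1}$ update. The only small refinement: the collapse of the extrapolation coefficient to $(t_k-1)/t_{k+1}$ needs just the substitution $\tau_j = 1/t_j$, not \eqref{eq:update_tau_cond}, which is used only for the $t_{k+1}$ recursion.
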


\begin{proof}
% A6. The proof of Lemma 5.3: AUPDPG scheme.
% \subsection{The proof of Lemma \ref{le:fast_uproxgrad}: Accelerated dual universal proximal-gradient scheme}\label{apdx:le:fast_uproxgrad}
Let $t_k = \tau_k^{-1}$, then $t_0 = \tau_0^{-1} = 1$.
From \eqref{eq:fast_uproxgrad}, we have $\tilde{\lbd}_{k} - \tilde{\lbd}_{k+1} = \frac{1}{\tau_{k}}(\hat{\lbd}_{k} - \lbd_{k+1}) = t_{k}(\hat{\lbd}_{k} - \lbd_{k+1})$.
We also have $\hat{\lbd}_{k} = (1-\tau_{k})\lbd_{k} + \tau_{k}\tilde{\lbd}_{k}$, which leads to $\tilde{\lbd}_{k} = \frac{1}{\tau_{k}}[\hat{\lbd}_{k} - (1-\tau_{k})\lbd_{k}] = t_{k}[\hat{\lbd}_{k} - (1-t_{k}^{-1})\lbd_{k}]$. 
Combining these expressions, we get
\begin{align*}
t_{k}(\hat{\lbd}_{k} - \lbd_{k+1}) = \tilde{\lbd}_{k} - \tilde{\lbd}_{k+1} = t_{k}[\hat{\lbd}_{k} - (1-t_{k}^{-1})\lbd_{k}]  -t_{k+1}[\hat{\lbd}_{k+1} - (1-t_{k+1}^{-1})\lbd_{k+1}],
\end{align*}
and this can be simplified as follows:
\begin{align*}
t_{k+1}\hat{\lbd}_{k+1} & = t_{k}\lbd_{k+1} + t_{k+1}(1-t_{k+1}^{-1})\lbd_{k+1} - t_{k}(1-t_{k}^{-1})\lbd_{k} \\
&=(t_{k} + t_{k+1} - 1)\lbd_{k+1} - (t_{k} - 1)\lbd_{k}.
\end{align*}
Hence $\hat{\lbd}_{k+1} = \lbd_{k+1} + \frac{t_{k}-1}{t_{k+1}}(\lbd_{k+1} - \lbd_{k})$, which is the third step of \eqref{eq:fast_uproxgrad_short}.

Next, from the condition \eqref{eq:update_tau_cond}, we have $t_{k+1}^2 - t_{k+1} - t_{k}^2 = 0$. 
Hence, $t_{k+1} = \frac{1}{2}\left[1 + \sqrt{1 + 4t_{k}^2}\right]$, which is exactly the second step of \eqref{eq:fast_uproxgrad_short}.
%From \eqref{eq:fast_uproxgrad}, we have $\tilde{\lbd}_{k-1} - \tilde{\lbd}_k = \frac{1}{\tau_{k-1}}(\hat{\lbd}_{k-1} - \lbd_k) = t_{k-1}(\hat{\lbd}_{k-1} - \lbd_k)$.
%We also have $\hat{\lbd}_{k-1} = (1-\tau_{k-1})\lbd_{k-1} + \tau_{k-1}\tilde{\lbd}_{k-1}$, which leads to $\tilde{\lbd}_{k-1} = \frac{1}{\tau_{k-1}}[\hat{\lbd}_{k-1} - (1-\tau_{k-1})\lbd_{k-1}] = t_{k-1}[\hat{\lbd}_{k-1} - (1-t_{k-1}^{-1})\lbd_{k-1}]$. 
%Hence, $\tilde{\lbd}_k = t_k[\hat{\lbd}_k - (1-t_k^{-1})\lbd_k]$.
%Combining these expressions, we get:
%\begin{align*}
%t_{k-1}(\hat{\lbd}_{k-1} - \lbd_k) = \tilde{\lbd}_{k-1} - \tilde{\lbd}_k =t_{k-1}[\hat{\lbd}_{k-1} - (1-t_{k-1}^{-1})\lbd_{k-1}]  -t_k[\hat{\lbd}_k - (1-t_k^{-1})\lbd_k].
%\end{align*}
%This expression can be simplified as:
%\begin{align*}
%t_k\hat{\lbd}_k = t_{k-1}\lbd_k + t_k(1-t_k^{-1})\lbd_k - t_{k-1}(1-t_{k-1}^{-1})\lbd_{k-1} =(t_{k-1} + t_k - 1)\lbd_k - (t_{k-1} - 1)\lbd_{k-1}.
%\end{align*}
%Hence $\hat{\lbd}_k = \lbd_k + \frac{t_{k-1}-1}{t_k}(\lbd_k - \lbd_{k-1})$, which is the first step of \eqref{eq:fast_uproxgrad_short}.
%
%Next, from the condition \eqref{eq:update_tau_cond} and $M_k \geq M_{k-1}$, we have $M_{k-1}\tau_k^2 \leq M_k\tau_k^2 = M_{k-1}\tau_{k-1}^2(1-\tau_k)$. This leads to $\tau_k^2 \leq \tau_{k-1}^2(1-\tau_k)$, or equivalent $t_k^2 - t_k - t_{k-1}^2 \geq 0$. Hence, $t_k \geq \frac{1}{2}\left[1 + \sqrt{1 + 4t_{k-1}^2}\right]$. The tightest rule for updating $t$ is exactly the first step of \eqref{eq:fast_uproxgrad_short}.
% \Eproof
% End of the proof.
\end{proof}

\subsection{The proof of Theorem \ref{th:primal_recovery2}: Convergence rate of the primal sequence}\label{apdx:th:primal_recovery2}
%Using Lemma \ref{le:key_bound} and subtracting $G^{\star}$, we have:
%\begin{align}\label{eq:th43_est1a}
%G(\lbd_{k \!+\! 1}) \!-\! G^{\star} &\leq\! \big[g(\hat{\lbd}_k) \!+\! \iprods{\nabla{g}(\hat{\lbd}_k), \lbd  \!-\!  \hat{\lbd}_k} \!+\! h(\lbd) \!-\! G^{\star}\big]   \!+\!  \frac{\tau_k\epsilon}{2}   \nonumber\\
%&\!+\! M_k\iprods{\hat{\lbd}_k  \!-\!  \lbd_{k \!+\! 1}, \hat{\lbd}_k \!-\! \lbd}  \!-\!  \frac{M_k}{2}\norm{\lbd_{k \!+\! 1}  \!-\!  \hat{\lbd}_k}^2.
%\end{align}
\begin{proof}
From Lemma \ref{le:key_bound}, we have
\begin{align}
G(\lbd_{k \!+\! 1})  &\leq\! \big[g(\hat{\lbd}_k) \!+\! \iprods{\nabla{g}(\hat{\lbd}_k), \lbd  \!-\!  \hat{\lbd}_k} \!+\! h(\lbd) \big]   \!+\!  \frac{\tau_k\epsilon}{2}   \nonumber\\
&~~~~~~~~~~~ +  M_k\iprods{\hat{\lbd}_k  \!-\!  \lbd_{k \!+\! 1}, \hat{\lbd}_k \!-\! \lbd}  \!-\!  \frac{M_k}{2}\norm{\lbd_{k \!+\! 1}  \!-\!  \hat{\lbd}_k}^2 \label{eq:AccUni_r1} \\
& \leq G(\lbd)   \!+\!  \frac{\tau_k\epsilon}{2}  +  M_k\iprods{\hat{\lbd}_k  \!-\!  \lbd_{k \!+\! 1}, \hat{\lbd}_k \!-\! \lbd}  \!-\!  \frac{M_k}{2}\norm{\lbd_{k \!+\! 1}  \!-\!  \hat{\lbd}_k}^2. \label{eq:AccUni_r2}
\end{align}
Note that these inequalities hold $\forall \lbd \in \R^n$. 
Next, we subtract $G^\star$ from \eqref{eq:AccUni_r1} to get
\begin{align}
G(\lbd_{k \!+\! 1}) - G^\star &\leq\! \big[g(\hat{\lbd}_k) \!+\! \iprods{\nabla{g}(\hat{\lbd}_k), \lbd  \!-\!  \hat{\lbd}_k} \!+\! h(\lbd) - G^\star \big]   \!+\!  \frac{\tau_k\epsilon}{2}   \nonumber\\
&~~~~~~~~~~~ +  M_k\iprods{\hat{\lbd}_k  \!-\!  \lbd_{k \!+\! 1}, \hat{\lbd}_k \!-\! \lbd}  \!-\!  \frac{M_k}{2}\norm{\lbd_{k \!+\! 1}  \!-\!  \hat{\lbd}_k}^2 \label{eq:AccUni_r3},
\end{align}
and we set $\lbd = \lbd_k$ in \eqref{eq:AccUni_r2}, and then subtract $G^\star$ from the both sides, that results in the following inequality: 
\begin{equation}
G(\lbd_{k\!+\!1}) \!-\! G^\star \leq G(\lbd_k)  \!-\! G^\star \!+\! \frac{\tau_k\epsilon}{2} \!-\! \frac{M_k}{2}\norm{\lbd_{k\!+\!1} \!-\! \hat{\lbd}_k}^2 + M_k\iprods{\hat{\lbd}_k - \lbd_{k\!+\!1}, \hat{\lbd}_k - \lbd_k}. \label{eq:AccUni_r4}
\end{equation}
We obtain the following estimate by summing the two inequalities that we get by multiplying \eqref{eq:AccUni_r3} by $\tau_k$ and \eqref{eq:AccUni_r4} by $(1-\tau_k)$, and then dividing the resulting estimate by $M_k\tau_k^2$:
%
%
%On the other hand, by \eqref{eq:lm35_est2}, we have:
%\begin{align}\label{eq:th43_est1b}
%G(\lbd_{k + 1}) -  G^{\star} \leq G(\lbd_k)   -  G^{\star}  + \frac{\delta_k}{2}  -  \frac{M_k}{2}\norm{\lbd_{k + 1}  -  \hat{\lbd}_k}^2  + M_k\iprods{\hat{\lbd}_k - \lbd_{k + 1}, \hat{\lbd}_k - \lbd_k}.
%\end{align}
%Similarly to the proof of \eqref{eq:lm35_est4}, by multiplying  \eqref{eq:th43_est1b} by $(1-\tau_k)$ and \eqref{eq:th43_est1a} by $\tau_k$ and summing the results up, then dividing both sides by $M_k\tau_k^2$, we obtain:
\begin{align}\label{eq:th43_est1c}
\frac{1}{M_k\tau_k^2}\big[G(\lbd_{k \!+\! 1}) \!-\! G^{\star}\big] &\leq \frac{(1 \!-\! \tau_k)}{M_k\tau_k^2}\big[G(\lbd_k) \!-\! G^{\star}\big] +  \frac{1}{2}\big[\norm{\tilde{\lbd}_k \!-\! \lbd}^2  -  \norm{\tilde{\lbd}_{k\!+\!1} \!-\! \lbd}^2\big] +  \frac{\epsilon}{2M_k\tau_k} \nonumber\\
&~~~~~~~~~~~ +  \frac{1}{M_k\tau_k}\big[ g(\hat{\lbd}_k)  \!+\!  \iprods{\nabla{g}(\hat{\lbd}_k), \lbd \!-\! \hat{\lbd}_k}  +  h(\lbd) - G^{\star}\big] .
\end{align}
Next, we sum this inequality over $k$ as follows:
\begin{align*}%\label{eq:th43_est1d}
\sum_{i=0}^k\frac{G(\lbd_{i\!+\!1}) - G^{\star}}{M_i\tau_i^2} & \leq \sum_{i=0}^k \bigg[ \frac{(1 \!-\! \tau_i)}{M_i\tau_i^2}\big[G(\lbd_i) \!-\! G^{\star}\big] +  \frac{1}{2}\big[\norm{\tilde{\lbd}_i \!-\! \lbd}^2  -  \norm{\tilde{\lbd}_{i\!+\!1} \!-\! \lbd}^2\big] +  \frac{\epsilon}{2M_i\tau_i} \nonumber\\
&~~~~~~~~~~~~~~~ +  \frac{1}{M_i\tau_i}\big[ g(\hat{\lbd}_i)  \!+\!  \iprods{\nabla{g}(\hat{\lbd}_i), \lbd \!-\! \hat{\lbd}_i}  +  h(\lbd) - G^{\star}\big] \bigg] \\
& \leq \sum_{i=1}^k  \frac{G(\lbd_i) \!-\! G^{\star}}{M_{i-1}\tau_{i-1}^2} +  \frac{1}{2}\big[\norm{\tilde{\lbd}_0 - \lbd}^2 - \norm{\tilde{\lbd}_{k + 1} - \lbd}^2\big] + \frac{\epsilon}{2}\sum_{i=0}^k\frac{1}{M_i\tau_i} \\
&~~~~~~~~~~~~~~~ + \sum_{i=0}^k\frac{1}{M_i\tau_i}\big[ g(\hat{\lbd}_i) + \iprods{\nabla{g}(\hat{\lbd}_i), \lbd  -  \hat{\lbd}_i} + h(\lbd) - G^{\star}\big],
\end{align*}
where the second inequality follows $\tau_0 = 1$ and $\frac{(1-\tau_k)}{M_k\tau_k^2} \leq \frac{1}{M_{k-1}\tau_{k-1}^2}$ for $k=1,2,\dots$, which holds since $M_k \geq M_{k-1}$. 
This implies the followings:
\begin{align*}
0 \leq \frac{G(\lbd_{k\!+\!1}) - G^{\star}}{\hat{S}_k M_k \tau_k^2} & \leq \frac{1}{\hat{S}_k}\sum_{i=0}^k\frac{1}{M_i\tau_i}\big[ g(\hat{\lbd}_i) + \iprods{\nabla{g}(\hat{\lbd}_i), \lbd  -  \hat{\lbd}_i} + h(\lbd) - G^{\star}\big]  \\
&~~~~~~~~~~~~~~~ + \frac{1}{2 \hat{S}_k}\big[\norm{\tilde{\lbd}_0 - \lbd}^2 - \norm{\tilde{\lbd}_{k + 1} - \lbd}^2\big] + \frac{\epsilon}{2} 
\end{align*}
\begin{equation*}
\implies -\frac{1}{\hat{S}_k}\sum_{i=0}^k\frac{1}{M_i\tau_i}\big[ g(\hat{\lbd}_i) + \iprods{\nabla{g}(\hat{\lbd}_i), \lbd  -  \hat{\lbd}_i} + h(\lbd) - G^{\star}\big] \leq \frac{1}{2 \hat{S}_k}\big[\norm{\tilde{\lbd}_0 - \lbd}^2 - \norm{\tilde{\lbd}_{k + 1} - \lbd}^2\big] + \frac{\epsilon}{2}.
\end{equation*}

Now, we use the following expressions to map this estimate into the primal sequence: 
\begin{align*}
\begin{array}{ll}
g(\lbd_i) 				& = -f(\xb^\ast(\hat{\lbd}_i)) + \langle \hat{\lbd}_i, \bb - \Ab\xb^\ast (\hat{\lbd}_i) \rangle, \vspace{0.75ex}\\
\nabla g(\hat{\lbd}_i) 			& = \bb - \Ab\xb^\ast(\hat{\lbd}_i),  \vspace{0.75ex}\\
G^\star 				& =  -d^\star = -f^\star.
\end{array}
\end{align*}

Then, considering the convexity of $f$, we get
\begin{align}
f(\bar{\xbar}_k) - f^\star & \leq  \iprods{ \bb - \Ab\bar{\xbar}_k , \lbd} + h(\lbd) + \frac{\epsilon}{2} +  \frac{1}{2 \hat{S}_k}\big[\norm{\tilde{\lbd}_0 - \lbd}^2 - \norm{\tilde{\lbd}_{k + 1} - \lbd}^2\big]  \nonumber \\ 
& \leq \frac{\epsilon}{2} +  \frac{\norm{\lbd_0}^2}{2\hat{S}_k} \label{eq:RHS_bound_obj},
\end{align}
where we obtain the second inequality by setting $\lbd = \mathbf{0}^n$.

We can reformulate \eqref{eq:update_tau_cond} as $\frac{1}{\tau_k} = \frac{1}{\tau_k^2} - \frac{1}{\tau_{k-1}^2}$.
Using this relation, $M_0 \leq M_i \leq M_k \leq 2\widebar{M}_{\epsilon\tau_k} = 2t_k^{\frac{1-\nu}{1+\nu}}\widebar{M}_{\epsilon} \leq 2(k+2)^{\frac{1-\nu}{1+\nu}}\widebar{M}_{\epsilon}$ and $\frac{k+2}{2} \leq t_k < k + 2$ for $i=0, 1, \dots, k$, we can show that
\begin{align}%\label{eq:th43_est2a}
\hat{S}_k := \sum_{i=0}^k\frac{1}{M_i\tau_i} &\geq \sum_{i=0}^k\frac{1}{2\widebar{M}_{\epsilon\tau_k}\tau_i} = \frac{1}{2\widebar{M}_{\epsilon\tau_k}}\Big[1 + \sum_{i=1}^k\big(\frac{1}{\tau_i^2} - \frac{1}{\tau_{i-1}^2}\big)\Big] \nonumber\\
&\geq \frac{t_k^2}{2(k+2)^{\frac{1-\nu}{1+\nu}}\widebar{M}_{\epsilon}} \geq \frac{(k+2)^{\frac{1+3\nu}{1+\nu}}}{8\widebar{M}_{\epsilon}}. \label{eq:Sk_bound}
\end{align}
We get the bound on the right hand side of \eqref{eq:primal_recovery1a} by substituting \eqref{eq:Sk_bound} into \eqref{eq:RHS_bound_obj}. 
The inequality on the left hand side of \eqref{eq:primal_recovery1a} follows the saddle point formulation \eqref{eq:saddle_form} by setting $\xb = \bar{\bar{\xb}}_k$.

Finally, we prove the convergence rate in the feasibility gap \eqref{eq:primal_recovery2a}. 
By the same arguments as in the proof of Theorem \ref{th:primal_recovery}, we have
\begin{align*}\label{eq:th43_est4}
\dist{\Ab\bar{\xbar}_k - \bb, \Kc} \leq \frac{2\norm{\lbd_0 - \lbd^{\star}}}{\hat{S}_k} + \sqrt{\frac{\epsilon}{\hat{S}_k}}.
\end{align*}
We complete the proof by substituting \eqref{eq:Sk_bound} into this estimate. 
\end{proof}

\subsection{The worst-case complexity analysis}
We analyze the worst-case complexity of AccUniPDGrad algorithm to achieve an $\epsilon$-solution $\bar{\bar{\xb}}_k$. 
For simplicity, we consider the case $\lbd_0 = \mathbf{0}^n$ without loss of generality. 
Then, we require
\begin{equation*}
\left[ \frac{16\widebar{M}_{\epsilon}}{(k+2)^{\frac{1+3\nu}{1+\nu}}}\norm{\lbd^{\star}}\nonumber + \sqrt{\frac{8\widebar{M}_{\epsilon}\epsilon}{(k\!+2)^{\frac{1+3\nu}{1+\nu}}}} ~ \right] \norm{\lbd^{\star}}_{[1]} \leq \epsilon
\end{equation*}
due to the Theorem \ref{th:primal_recovery2}, where $\norm{\lbd^\star}_{[1]} := \max{\{\norm{ \lbd^{\star}},1\}}$. 
By solving this inequality, we get
\begin{equation*}
k + 2 \geq \left[ \frac{8\sqrt{2}\norm{\lbd^\star}}{-1 + \sqrt{1 + 8\frac{\norm{\lbd^{\star}}}{\norm{\lbd^{\star}}_{[1]}}}} \right]^{\frac{2+2\nu}{1+3\nu}}  \left[ \frac{\widebar{M}_{\epsilon}}{\epsilon} \right]^{\frac{1+\nu}{1+3\nu}}.
\end{equation*}
Using the definition \eqref{eq:M_upper} of $\widebar{M}_{\epsilon}$ and considering the fact that $\left[\frac{1-\nu}{1 + \nu}\right]^{\frac{1-\nu}{1 + \nu}} \leq 1$ for $\nu \in [0, 1]$, we find the maximum number of iterations that satisfies the above inequality as follows:
\begin{equation*}
k_{\max} = \left\lfloor \left[ \frac{8\sqrt{2}\norm{\lbd^\star}}{-1 + \sqrt{1 + 8\frac{\norm{\lbd^{\star}}}{\norm{\lbd^{\star}}_{[1]}}}} \right]^{\frac{2+2\nu}{1+3\nu}}\inf_{0\leq\nu\leq 1}\bigg(\frac{M_\nu}{\epsilon} \bigg)^{\frac{2}{1+3\nu}}\right\rfloor,
\end{equation*}
which is indeed \eqref{eq:complexity_est2}. 

Hence, the worst-case complexity to obtain an $\epsilon$-solution of \eqref{eq:constr_cvx} in the sense of Definition \ref{de:approx_sols} is 
\begin{equation*}
\mathcal{O}\left( \inf_{0\leq\nu\leq 1}\bigg(\frac{M_\nu}{\epsilon} \bigg)^{\frac{2}{1+3\nu}}\right),
\end{equation*} 
which  is optimal in the sense of first-order black box models \cite{Nemirovskii1983supp}.

Next, we consider the number of oracle quires in AccUniPDGrad. 
At iteration $k$, the algorithm requires $i_k\!+\!2$ function evaluations of $g$, as we need $i_k\!+\!1$ in the line-search and one for $g(\hat{\lbd}_k)$. Hence, the total number of oracle quires up to the iteration $k$ is $N_2(k) = \sum_{j=0}^k(i_j+2)$. Since $i_j = \log_2(M_j/M_{j-1})$, we have
\begin{equation*}
N_2(k) = 2(k+1) + \log_2(M_k) - \log_2(M_{-1}).
\end{equation*}
Using the same argument as in the proof of Lemma \ref{le:line_search_termination}, we have $M_k \leq 2\widebar{M}_{\epsilon\tau_k} \leq 2 \left[\frac{k+1}{\epsilon}\right]^{\frac{1-\nu}{1+\nu}}M_{\nu}^{\frac{2}{1 + \nu}}$. Hence, we obtain \eqref{eq:N_2} as
\begin{equation*}
N_2(k) \leq 2(k+1) + 1 + \frac{1-\nu}{1+\nu}\left[\log_2(k+1) - \log_2(\epsilon)\right] + \frac{2}{1+\nu}\log_2(M_{\nu}) - \log_2({M}_{-1}).
\end{equation*}

\section{The implementation details}\label{sec:imp_det_app}
In this section, we specify key steps of UniPDGrad and AccUniPDGrad for two important applications that we used in Section \ref{sec:num_experiments}.
We also provide an analytic step-size that guarantees the line-search condition without function evaluation. 

We performed the experiments in MATLAB, using a computational resource with 4 CPUs of 2.40 GHz and 16 GB memory space for the matrix completion, and 16 CPUs of 2.40 GHz and 512 GB memory space for the quantum tomography problem.

% A. Application 1.
\subsection{Constrained convex optimization involving a quadratic cost}

In both quantum tomography and the matrix completion problems, we consider some problem formulations from the following convex optimization template that involves a quadratic cost:
\begin{equation*}
\min_{\xb \in \R^p} \set{ \frac{1}{2} \norm{\Ac(\xb) - \bb}^2 :  \xb \in \Xc}.
\end{equation*}
For notational simplicity, we consider the problem in $\R^p/\R^n$ spaces in this section, but the ideas apply in general. 

Evaluation of the sharp-operator corresponding to the objective function $1/2 \norm{\Ac(\xb) - \bb}^2$ requires a significant computational effort. 
Yet, by introducing the slack variable $\rb = \Ac(\xb) - \bb$, we can write an equivalent problem as
\begin{equation*}
\min_{(\rb, \xb) \in \R^n \times \R^p} \set{ \frac{1}{2} \norm{\rb}^2 : \Ac(\xb) - \rb = \bb,~ \xb \in \Xc}.
\end{equation*}

We can write the Lagrange function associated with the linear constraint as
\begin{equation*}
\Lc(\rb, \xb, \lbd) = \frac{1}{2} \norm{\rb}^2 + \iprods{\lbd, \rb -  \Ac(\xb)  + \bb },
\end{equation*}
from which we can derive the (negation of the) dual function
\begin{align}
g(\lbd) = -\min_{\rb \in \R^n,\xb \in \Xc} \Lc(\rb, \xb, \lbd) & = - \min_{\rb \in \R^n} \big\{ \frac{1}{2} \norm{\rb}^2 + \iprods{\lbd, \rb } \big\} + \max_{\xb \in \Xc} \iprods{ \lbd,  \Ac(\xb) } + \iprods{\lbd, \bb}   \nonumber \\
& = \frac{1}{2} \norm{\lbd}^2 + \iprods{\lbd, \bb - \Ac(\xb^\ast(\lbd))}, \label{eq:negdualMC}
\end{align}
and its subgradient
\begin{equation*}
\nabla g(\lbd) = \lbd - \bb + \Ac(\xb^\ast(\lbd)), 
\end{equation*}
where $\xb^\ast(\lbd) \in [ \Ac^T(\lbd)  ]^\sharp_{\Xc} \equiv \arg \max_{\xb \in \Xc} ~ \iprods{\Ac^T(\lbd), \xb}$.

For the special case, $\Xc$ is a norm ball, i.e., $\Xc \equiv \set{ \xb: \| \xb \| \leq \kappa }$, we can simplify \eqref{eq:negdualMC} as follows:
\begin{align}
g(\lbd) = \frac{1}{2} \norm{\lbd}^2 + \iprods{\lbd, \bb} + \kappa \norm{\Ac^T(\lbd)}. \label{eq:negdualMC2}
\end{align}

\noindent\textbf{Computing an analytical step-size:}
Now, we consider the line-search procedure in UniPDGrad and AccUniPDGrad. 
Since $h(\lbd)$ term is absent in these problems, the line-search condition \eqref{eq:linesearch_cond} can be simplified as
\begin{equation}\label{eq:ls_cond}
g(\lbd_{k+1}) = g(\hat{\lbd}_k - \alpha_k\nabla{g}(\hat{\lbd}_k)) \leq g(\hat{\lbd}_k) - \frac{\alpha_k}{2}\norm{\nabla{g}(\hat{\lbd}_k)}^2 + \delta_k/2,
\end{equation}
where we use the notational convention $\hat{\lbd}_k=\lbd_k$ and $\delta_k = \epsilon$ for UniPDGrad, and $\delta_k = \epsilon/t_k$ for AccUniPDGrad. 
Using the definition \eqref{eq:negdualMC2}, we can upper bound $g(\hat{\lbd}_k - \alpha_k\nabla{g}(\hat{\lbd}_k))$ by
\begin{equation*}
U(\alpha_k) := g(\hat{\lbd}_k) + (\alpha_k^2/2)\norm{\nabla{g}(\hat{\lbd}_k)}^2 - \alpha_k\iprods{\hat{\lbd}_k - \bb ,\nabla{g}(\hat{\lbd}_k)}  + \alpha_k \kappa \norm{ \Ac^T(\nabla{g}(\hat{\lbd}_k))}.
\end{equation*}
The condition \eqref{eq:ls_cond} holds if $U(\alpha_k) = g(\hat{\lbd}_k) - \frac{\alpha_k}{2}\norm{\nabla{g}(\hat{\lbd}_k)}^2 + \delta_k/2$. Solving this second order equation, we obtain $\alpha_k$ explicitly as
\begin{equation*}
\alpha_k = \frac{-P + \sqrt{P^2 + 4\delta_k \norm{\nabla{g}(\hat{\lbd}_k)}^2} }{2\norm{\nabla{g}(\hat{\lbd}_k)}^2}, 
\end{equation*}
where $P := \norm{\nabla{g}(\hat{\lbd}_k)}^2 + 2 \kappa \norm{\Ac^T(\nabla{g}(\hat{\lbd}_k))} - 2\iprods{\hat{\lbd}_k -  \bb, \nabla{g}(\hat{\lbd}_k)}$. 
Note that, we can use this method to find a good estimate for the initial smoothness constant $M_{-1}$ in the initialization step. 

% b. Application 2.
\subsection{Constrained convex optimization involving a norm cost}
Now, we consider the second application, which is reformulated as
\begin{equation*}\label{eq:norm_cost}
\min_{\Xb \in \R^{p \times l}}\set{ \psi(\Xb) = \frac{1}{n}\norm{\Xb}_\ast^2 : \Ac(\Xb) - \bb \in \Kc},
\end{equation*}
where $\Kc$ is an $\ell_2$-norm ball, i.e., $\Kc := \set{\rb : \norm{\rb} \leq \kappa}$.
Once again, by introducing the slack variable $\rb = \Ac(\Xb) - \bb$, we get
\begin{equation*}%\label{eq:norm_cost_reform}
\min_{\Xb \in \R^{p \times l}, \rb \in \R^n}\set{ \frac{1}{n}\norm{\Xb}_\ast^2 : \Ac(\Xb) - \rb = \bb,~~ \rb \in \Kc}.
\end{equation*}
Clearly, the dual components $g$ and $h$ defined in \eqref{eq:dual_components} can be expressed as:
\begin{align}
g(\lbd) & = \max_{\Xb \in \R^{p \times l}}\set{\iprods{\Ac^T(\lbd), \Xb} - \frac{1}{n} \norm{\Xb}_\ast^2} + \iprods{\bb,\lbd} = \frac{n}{4} \norm{\Ac^T(\lbd)}^2 + \iprods{\bb,\lbd}, 		\nonumber	 \\
h(\lbd) & = ~\max_{\rb\in\Kc} ~ \iprods{-\lbd, \rb} = \max_{\norm{\rb}\leq\kappa} ~  \iprods{-\lbd, \rb} = \kappa\norm{\lbd}, 												\nonumber
\end{align}
where $\norm{\cdot}$ represents the Euclidean norm for vectors and the spectral norm for matrices. 
In \eqref{eq:matrix_comp_robust}, we consider a special case where $\Kc \equiv \set{\mathbf{0}^n}$, hence $h(\lbd) = 0$.

Clearly, $\Xb^{*}(\lbd) = \sigma_1 \mathbf{e}_1\mathbf{e}_1^T \in [\Ac^T(\lbd)]^\sharp_\psi$, where $\sigma_1 = \norm{\Ac^T(\lbd)}$ is the top singular value of $\Ac^T(\lbd)$ and $\mathbf{e}_1$ is the associated left singular vector. 
Hence, we can write the (sub)gradient of g as
\begin{equation*}
\nabla g(\lbd) = \bb - \Ac(\Xb^{*}(\lbd)).
\end{equation*}

We can compute both $\sigma_1$ and $\mathbf{e}_1$ efficiently by using the power method or the Lanczos algorithm.

\end{document}